
\documentclass[a4paper]{amsart}
\usepackage{amsmath,amsthm,amssymb,enumerate,color}

\oddsidemargin = 9pt \evensidemargin = 9pt \textwidth = 440pt

\date{December 21, 2009}

\newcommand{\spa}{\operatornamewithlimits{span}} 

\newcommand{\norm}[1]{\left\Vert#1\right\Vert}
\newcommand{\abs}[1]{\left\vert#1\right\vert}
\newcommand{\set}[1]{\left\{#1\right\}}
\newcommand{\Real}{\mathbb{R}}

\newtheorem{thm}{Theorem}[section]
\newtheorem{prop}[thm]{Proposition}

\newtheorem{lem}[thm]{Lemma}

\theoremstyle{definition}
\newtheorem{defn}[thm]{Definition}
\newtheorem{rem}[thm]{Remark}

\numberwithin{equation}{section}

\author[P. R. Stinga]{Pablo Ra\'ul Stinga}
\address{Departamento de Matem\'aticas \\
          Facultad de Ciencias \\
          Universidad Au\-t\'o\-no\-ma de Madrid \\
          28049 Madrid, Spain}
\email{pablo.stinga@uam.es}

\author[J. L. Torrea]{Jos\'e Luis Torrea}
\address{Departamento de Matem\'aticas \\
          Facultad de Ciencias \\
          Universidad Au\-t\'o\-no\-ma de Madrid \\
          28049 Madrid, Spain}
\email{joseluis.torrea@uam.es}

\thanks{Research supported by Ministerio de Ciencia e Innovaci\'{o}n de Espa\~{n}a under project MTM2008-06621-C02-01}

\keywords{Fractional harmonic oscillator; Schauder estimate; Fractional integral; Riesz transform}

\subjclass[2000]{26A33, 35B65, 47B06}

\begin{document}

\title{Regularity theory for the fractional harmonic oscillator}

\begin{abstract}
In this paper we develop the theory of Schauder estimates for the fractional harmonic oscillator $H^\sigma=(-\Delta+\abs{x}^2)^\sigma$, $0<\sigma<1$. More precisely, a new class of smooth functions $C^{k,\alpha}_H$ is defined, in which we study the action of $H^\sigma$. In fact these spaces are those adapted to the operator $H$, hence the suited ones for this type of regularity estimates. In order to prove our results, an analysis of the interaction of the Hermite-Riesz transforms  with the H\"older spaces $C^{k,\alpha}_H$ is needed, that we believe of independent interest.
\end{abstract}

\maketitle

\section{Introduction}

For a given partial differential operator $L$, the analysis of its regularity properties with respect to H\"older classes is one of the tools employed in the theory to prove important facts about partial differential equations. Indeed, being a bit imprecise, it is well known that if $f$ is a H\"older continuous function with exponent $\alpha$, then the equation $-\Delta u=f$ has a unique solution $u$, whose second order derivatives belong to $C^\alpha$, and $\norm{u}_{C^{2,\alpha}}$ is controlled by $\norm{f}_{C^\alpha}$. This result was first applied to obtain classical solutions of second order elliptic equations of the form $Lu=f$ (see for instance \cite[Chapter~6]{Gilbarg-Trudinger}). Recently, and motivated by the obstacle problem for the fractional Laplacian, L. Silvestre proved in \cite{Silvestre CPAM} (see also his thesis \cite{Silvestre Thesis}) the regularity properties for the operator $(-\Delta)^\sigma$, $0<\sigma<1$, when acting on H\"older spaces. For more applications see \cite{Caffarelli-Salsa-Silvestre} and \cite{Caffarelli-Silvestre Evans-Krylov}.

Let $H$ be the most basic Schr\"odinger operator in $\Real^n$, $n\geq1$, the harmonic oscillator:
$$H=-\Delta+\abs{x}^2.$$
The fractional powers $H^\sigma$, $0<\sigma<1$, were introduced in \cite{Stinga-Torrea}.

The aim of this paper is to prove regularity estimates in H\"older classes for the fractional harmonic oscillator $H^\sigma$. For this purpose, we define new H\"older spaces $C^{k,\alpha}_H$, different than the classical H\"older spaces $C^{k,\alpha}$, in which the smoothness properties of $H^\sigma$ are analyzed, see Definition \ref{calfa} and Theorem A.

The classes $C^{k,\alpha}_H$ are the natural spaces associated to $H$. This becomes evident, for instance, in the fact that the Hermite-Riesz transforms have the expected behavior: they preserve them, see Theorem \ref{Thm:R y Calfa}. Also the fractional integrals produce a kind of ``inverse fractional derivative" process when acting in $C^{k,\alpha}_H$, see Theorem B.

Our estimates, together with Harnack's inequality for $H^\sigma$ proved in \cite{Stinga-Torrea}, are the basic regularity estimates one expects to get when having the fractional powers of a second order operator. Moreover, we were able to find the right spaces for which Schauder estimates are appropriated. We expect that the correct regularity estimates for nonlinear problems related to the fractional harmonic oscillator can be obtained in these spaces. Applications will appear elsewhere.

Let us introduce the definition of $H^\sigma$. For a function $f\in\mathcal{S}$ and $0<\sigma<1$, the fractional harmonic oscillator $H^\sigma$ is given by the classical formula
\begin{equation}\label{H sig heat}
H^\sigma f(x)=\frac{1}{\Gamma(-\sigma)}\int_0^\infty\left(e^{-tH}f(x)-f(x)\right)~\frac{dt}{t^{1+\sigma}},
\end{equation}
where $v(x,t)=e^{-tH}f(x)$ is the solution of the heat-diffusion equation $\partial_tv+Hv=0$ in $\Real^n\times(0,\infty)$, with initial datum $v(x,0)=f(x)$ on $\Real^n$. In \cite{Stinga-Torrea} it is shown that
\begin{equation}\label{H sig f point}
H^\sigma f(x)=\int_{\Real^n}(f(x)-f(z))F_\sigma(x,z)~dz+f(x)B_\sigma(x),\qquad x\in\Real^n,~f\in\mathcal{S},
\end{equation}
where
\begin{equation}\label{FyB}
F_\sigma(x,z)=\frac{1}{-\Gamma(-\sigma)}\int_0^\infty G_t(x,z)~\frac{dt}{t^{1+\sigma}},\qquad B_\sigma(x)=\frac{1}{\Gamma(-\sigma)}\int_0^\infty\left[\int_{\Real^n}G_t(x,z)~dz-1\right]\frac{dt}{t^{1+\sigma}},
\end{equation}
and $G_t(x,z)$ is the kernel of the heat-difussion semigroup generated by $H$, see \eqref{heat int}.

Next we define the H\"{o}lder spaces in which the regularity properties of the operators will be considered.

\begin{defn}\label{calfa}
Let $0<\alpha\leq1$. A continuous function $u:\Real^n\to\Real$ belongs to the Hermite-H\"{o}lder space $C_H^{0,\alpha}$ associated to $H$, if there exists a constant $C$, depending only on $u$ and $\alpha$, such that
$$\abs{u(x_1)-u(x_2)}\leq C\abs{x_1-x_2}^\alpha,\quad\hbox{ and }\quad\abs{u(x)}\leq \frac{C}{(1+\abs{x})^\alpha},$$
for all $x_1,x_2,x\in\Real^n$. With the notation
$$[u]_{C^{0,\alpha}}=\sup_{\begin{subarray}{c}x_1,x_2\in\Real^n\\x_1\neq x_2\end{subarray}}\frac{\abs{u(x_1)-u(x_2)}}{\abs{x_1-x_2}^\alpha},\quad\hbox{ and }\quad[u]_{M^\alpha}=\sup_{x\in\Real^n}\abs{(1+\abs{x})^\alpha u(x)},$$
we define the norm in the spaces $C_H^{0,\alpha}$ to be $$\norm{u}_{C_H^{0,\alpha}}=[u]_{C^{0,\alpha}}+[u]_{M^\alpha}.$$
\end{defn}

When working with the harmonic oscillator $H$, some special first order partial differential operators are considered, see \eqref{derivatives}, which are the natural \textit{derivatives}. Then, in a usual way, the classes $C^{k,\alpha}_H$ can be defined, see Definition \ref{ckalfa}. We present now our first main result.

\

\noindent{\bf Theorem A.}
\textit{Let $\alpha\in(0,1]$ and $\sigma\in(0,1)$.
\begin{enumerate}[({A}1)]
   \item  Let $u\in C^{0,\alpha}_H$ and $2\sigma<\alpha$. Then $H^\sigma u\in C_H^{0,\alpha-2\sigma}$ and $$\norm{H^\sigma u}_{C_H^{0,\alpha-2\sigma}}\leq C\norm{u}_{C_H^{0,\alpha}}.$$
   \item Let $u\in C_H^{1,\alpha}$ and $2\sigma<\alpha$. Then $H^\sigma u\in C_H^{1,\alpha-2\sigma}$ and
        $$\norm{H^\sigma u}_{C_H^{1,\alpha-2\sigma}}\leq C\norm{u}_{C_H^{1,\alpha}}.$$
   \item Let $u\in C_H^{1,\alpha}$ and $2\sigma\geq\alpha$, with $\alpha-2\sigma +1\neq 0$. Then $H^\sigma u\in C_H^{0,\alpha-2\sigma+1}$ and
        $$\norm{H^\sigma u}_{C_H^{0,\alpha-2\sigma+1}}\leq C\norm{u}_{C_H^{1,\alpha}}.$$
   \item Let $u\in C_H^{k,\alpha}$ and assume that $k+\alpha-2\sigma$ is not an integer. Then $H^\sigma u\in C_H^{l,\beta}$ where $l$ is the integer part of $k+\alpha-2\sigma$ and $\beta=k+\alpha-2\sigma-l$.
\end{enumerate}}

\

The last Theorem can be interpreted as saying that the H\"{o}lder spaces $C^{k,\alpha}_H$ are the reasonable classes in order to obtain Schauder type estimates for $H^\sigma$. Indeed, if we define the negative powers of $H$, i.e. the fractional integral operators
$$H^{-\sigma}f(x)=\frac{1}{\Gamma(\sigma)}\int_0^\infty e^{-tH}f(x)~\frac{dt}{t^{1-\sigma}},\qquad0<\sigma\leq1,$$
then, we are able to prove our second main result:

\

\noindent{\bf Theorem B.}
\textit{Let $u\in C_H^{0,\alpha}$, for some $0<\alpha\leq1$, and $0<\sigma\leq1$.
\begin{enumerate}[({B}1)]
  \item If $\alpha+2\sigma\leq1$, then $H^{-\sigma}u\in C_H^{0,\alpha+2\sigma}$ and
  $$\norm{H^{-\sigma}u}_{C_H^{0,\alpha+2\sigma}}\leq C\norm{u}_{C_H^{0,\alpha}}.$$
  \item If $1<\alpha+2\sigma\leq2$, then $H^{-\sigma}u\in C_H^{1,\alpha+2\sigma-1}$ and
  $$\norm{H^{-\sigma}u}_{C_H^{1,\alpha+2\sigma-1}}\leq C\norm{u}_{C_H^{0,\alpha}}.$$
  \item If $2<\alpha+2\sigma\leq3$, then $H^{-\sigma}u\in C^{2,\alpha+2\sigma-2}_H$ and
  $$\norm{H^{-\sigma}u}_{C_H^{2,\alpha+2\sigma-2}}\leq C\norm{u}_{C_H^{0,\alpha}}.$$
\end{enumerate}}

\

It is worth noting that, if in Theorem B \textit{(B3)} we take $\sigma=1$, we get the Schauder estimate for the solution to $Hu=f$, in $\Real^n$, with $f\in C^{0,\alpha}_H$.

To prove the two main Theorems of this paper, we need a study of the first and second order Hermite-Riesz transforms, $\mathcal{R}_i$ and $\mathcal{R}_{ij}$, acting on the spaces $C^{0,\alpha}_H$, that we believe of independent interest. See the final part of Section \ref{Section:Operators} and Theorem \ref{Thm:R y Calfa}.

The first main task in our paper is to obtain explicit pointwise expressions for all the operators involved, when they are applied to functions belonging to the spaces $C^{k,\alpha}_H$, and the second one is to actually prove the regularity estimates. Section \ref{Section:Abstracta} contains two abstract Propositions dealing with these two aspects: Proposition \ref{Prop:tecnico} takes care of the pointwise formulas and Proposition \ref{Prop:tecnico2} contains a regularity result. We will apply, in a systematic way, both Propositions in order to reach our objectives: see Section \ref{Section:Operators} for all the pointwise formulas, and Section \ref{Section:Proofs} for the proofs of Theorems A and B. In Section \ref{Section:Tecnica} we collect all the computational Lemmas used in the previous sections.

In some recent papers, B. Bongioanni, E. Harboure and O. Salinas studied the boundedness of fractional integrals (see \cite{Bongioanni-Harboure-Salinas}) and Riesz transforms (see \cite{Bongioanni-Harboure-Salinas Riesz}), associated to a certain class of Schr\"odinger operators $\mathcal{L}=-\Delta+V$, in spaces of $BMO^\beta_{\mathcal{L}}$ type, $0\leq\beta<1$, using Harmonic Analysis techniques. In \cite{Bongioanni-Harboure-Salinas}, Proposition 4, they showed that the $BMO^\beta_{\mathcal{L}}$ spaces coincide with a H\"older type space $\Lambda^\beta_{\mathcal{L}}$, $0<\beta<1$, with equivalent norms. In the case $V=\abs{x}^2$, our space $C^{0,\beta}_H$ coincides with their space $\Lambda^\beta_H$, for $0<\beta<1$.

A natural question to think about is the possibility of getting (at least the local part of) our results by modifying, in an appropriate way, the kernel of the classical fractional Laplace operator. In our opinion our procedure is the natural one and we haven't found a smooth bridge to pass from one case to the other. Even more, some recent (local) results by  R. F. Bass in \cite{Bass} about \textit{stable-like operators} cannot be applied in our case because, clearly, his assumption on the kernel $A(x,h)$ (Assumption 1.1, 1) is not fulfilled by our kernel $F_\sigma(x,z)$, see Lemma \ref{Lem:F sig est} below. Moreover, he does not allow for $\alpha+\beta$ to be an integer (Assumption 1.1, 3), but we do. We want to complete the thought of this paragraph by establishing the parallelism with possible definitions of Sobolev spaces for the harmonic oscillator that were considered by R. Radha and S. Thangavelu in \cite{Radha-Thangavelu}, by S. Thangavelu in \cite{Thangavelu2} and also by B. Bongioanni and J. L. Torrea in  \cite{Bongioanni-Torrea}. In that case natural definitions of Sobolev spaces were given in order to get the results for the harmonic oscillator.

Throughout this paper, the letter $C$ denotes a positive constant that may change in each occurrence and it will depend on the parameters involved (whenever it is necessary, we point out this dependence with subscripts), and $\Gamma$ stands for the Gamma function. Recall that $\Gamma(-\sigma)=-\Gamma(1-\sigma)/\sigma$, $0<\sigma<1$. Without mention it, we will repeatedly apply the inequality $r^\eta e^{-r}\leq C_\eta e^{-r/2}$, $\eta\geq0$, $r>0$.

\section{Two abstract results}\label{Section:Abstracta}

\begin{prop}\label{Prop:tecnico}
Let $T$ be a bounded operator on $\mathcal{S}$, such that $\langle Tf,g\rangle=\langle f,Tg\rangle$, for all $f,g\in\mathcal{S}$. Assume that
$$Tf(x)=\int_{\Real^n}(f(x)-f(z))K(x,z)~dz+f(x)B(x),\qquad x\in\Real^n,$$
where the kernel $K$ verifies
\begin{equation}\label{K est}
\abs{K(x,z)}\leq\frac{C}{\abs{x-z}^{n+\gamma}}~e^{-\frac{\abs{x}\abs{x-z}}{C}}e^{-\frac{\abs{x-z}^2}{C}},\qquad x,z\in\Real^n,
\end{equation}
for some $-n\leq\gamma<1$, and $B$ is a continuous function with polynomial growth at infinity. Let $u\in C_H^{0,\gamma+\varepsilon}$, with $0<\gamma+\varepsilon\leq1$, $\varepsilon>0$. Then $Tu$ is well defined as a tempered distribution and it coincides with the continuous function
\begin{equation}\label{Tu}
Tu(x)=\int_{\Real^n}(u(x)-u(z))K(x,z)~dz+u(x)B(x),\qquad x\in\Real^n.
\end{equation}
\end{prop}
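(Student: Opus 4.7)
The plan is to show that the right-hand side of \eqref{Tu}, which I will call $\widetilde{T}u(x)$, defines a continuous function on $\Real^n$ and coincides with the tempered distribution $Tu$ defined by $\langle Tu,\varphi\rangle:=\langle u,T\varphi\rangle$ for $\varphi\in\mathcal{S}$ (valid because the self-adjointness of $T$ on $\mathcal{S}$ extends $T$ to $\mathcal{S}'$ by duality, and $u\in L^\infty\subset\mathcal{S}'$ since $u\in C_H^{0,\gamma+\varepsilon}$). For the pointwise finiteness of $\widetilde Tu$, I would split the kernel integral at $\abs{x-z}=1$. Near the diagonal, combining the H\"older estimate $\abs{u(x)-u(z)}\leq[u]_{C^{0,\gamma+\varepsilon}}\abs{x-z}^{\gamma+\varepsilon}$ with $\abs{K(x,z)}\leq C\abs{x-z}^{-n-\gamma}$ from \eqref{K est} produces the integrable majorant $C\abs{x-z}^{\varepsilon-n}$---here the hypothesis $\varepsilon>0$ is essential. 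Away from the diagonal, the Gaussian factor $e^{-\abs{x-z}^2/C}$ and $\norm{u}_\infty<\infty$ yield a majorant integrable in $z$ uniformly for $x$ in any fixed compact set. These estimates, plus continuity of $K$ off the diagonal (directly verifiable in each concrete application), give continuity of $\widetilde Tu$ at any $x_0$ by dominated convergence on a small ball around $x_0$.

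\textbf{Distributional identity.} To prove $\widetilde Tu=Tu$ in $\mathcal{S}'$ I would approximate $u$ by Schwartz functions. Set $u_k:=\psi_k(u\ast\rho_{1/k})$, where $\rho_{1/k}$ is a standard mollifier and $\psi_k\in C_c^\infty(\Real^n)$ satisfies $\psi_k\equiv 1$ on $B(0,k)$, $\psi_k\equiv 0$ outside $B(0,2k)$. Then $u_k\in\mathcal{S}$, $u_k\to u$ pointwise boundedly, and both $\norm{u_k}_\infty$ and $[u_k]_{C^{0,\gamma+\varepsilon}}$ are bounded uniformly in $k$ (the H\"older norm survives the cutoff because $\abs{\nabla\psi_k}\leq C/k$). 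Since $u_k\in\mathcal{S}$, the pointwise formula \eqref{Tu} gives $Tu_k$ directly, and the self-adjointness hypothesis yields
\[
\int u_k(x)\,T\varphi(x)\,dx=\int Tu_k(x)\,\varphi(x)\,dx,\qquad\varphi\in\mathcal{S}.
\]
As $k\to\infty$, the left-hand side tends to $\langle Tu,\varphi\rangle$ because $T\varphi\in\mathcal{S}\subset L^1$ and $u_k\to u$ pointwise boundedly. For the right-hand side, a first dominated convergence (over $z$, with the dominators of the previous paragraph now used uniformly in $k$) gives $Tu_k(x)\to\widetilde Tu(x)$ pointwise, and a second one (over $x$, using the uniform polynomial bound $\abs{Tu_k(x)}\leq C(1+\abs{B(x)})$ against the rapid decay of $\varphi$) gives $\int Tu_k\,\varphi\to\int\widetilde Tu\,\varphi$, which closes the argument.

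\textbf{Main obstacle.} The admissible range $-n\leq\gamma<1$ allows nonnegative $\gamma$, in which case $K$ is not locally integrable across the diagonal and the pieces $f(x)K(x,z)$ and $f(z)K(x,z)$ of $(f(x)-f(z))K(x,z)$ cannot be separated, so no direct appeal to Fubini is possible. Every dominator appearing in the proof must therefore respect the cancellation $u(x)-u(z)$; in particular, the approximation scheme must preserve a uniform-in-$k$ H\"older bound on $u_k$, which forces the mollification to be applied before the smooth truncation rather than after.
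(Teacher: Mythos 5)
Your approach is essentially that of the paper: both approximate $u$ by mollified-and-truncated Schwartz functions with uniformly bounded H\"older seminorm, pass $T$ to the test side using symmetry, and exploit $\varepsilon>0$ to tame the kernel singularity against the H\"older increment $\abs{u(x)-u(z)}$; the argument is sound, and your ``main obstacle'' discussion correctly identifies why the cancellation $u(x)-u(z)$ must be preserved throughout. The technical divergence is instructive. Rather than pointwise bounded convergence and two applications of dominated convergence, the paper fixes $p>n/(\gamma+\varepsilon)$ (so that $u\in L^p$ thanks to $[u]_{M^{\gamma+\varepsilon}}<\infty$), proves $f_j\to u$ in $L^p$ and locally uniformly, and controls the off-diagonal part by H\"older's inequality against the $L^{p'}$ tail of $K$. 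The payoff is that the paper obtains \emph{locally uniform} convergence of $\int(f_j(x)-f_j(z))K(x,z)\,dz$, and since $Tf_j\in\mathcal{S}$ is continuous, so is $Tf_j(x)-f_j(x)B(x)$; continuity of the limit is then automatic as a uniform limit, with no regularity assumption on $K$ off the diagonal. In your scheme the distributional identity $Tu=\widetilde Tu$ comes out cleanly, but the continuity of $\widetilde Tu$ is argued separately and leans on continuity of $K(x,z)$ for $x\neq z$ as an extra, application-specific hypothesis---you flagged this honestly, but it is worth noticing the paper sidesteps it entirely by upgrading to uniform convergence on compacts, which is the cleaner route.
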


\begin{proof}
By \eqref{K est} and the smoothness of $u$, the integral in \eqref{Tu} is absolutely convergent. Since $B$ has polynomial growth at infinity, the right hand side of \eqref{Tu} defines a tempered distribution. Let us take $\frac{n}{\gamma+\varepsilon}<p<\infty$. Then, the finiteness of $[u]_{M^{\gamma+\varepsilon}}$ implies that $u\in L^p(\Real^n)$, and $Tu$ is well defined as a tempered distribution. Fix an arbitrary positive number $\eta$ and suppose that $R>0$. Let $f_j(x):=\zeta(x/j)\left(u\ast W_{1/j}\right)(x)$, $j\in\mathbb{N}$, where $W_t(z)=(4\pi t)^{-n/2}e^{-\abs{z}^2/(4t)}$ is the Gauss-Weierstrass kernel and $\zeta$ is a nonnegative smooth cutoff function (that is, $\zeta\in C_c^\infty(\Real^n)$, $0\leq\zeta\leq1$, $\zeta\equiv1$ in $B_1(0)$, $\zeta\equiv0$ in $B_2^c(0)$, and $\abs{\nabla\zeta}<C$ in $\Real^n$). Note that each $f_j$ belongs to $\mathcal{S}$. It is easy to check that the sequence $\set{f_j}_{j\in\mathbb{N}}$ converges to $u$ in $L^p(\Real^n)$ and uniformly in $B_R(x)$ for each $x\in\Real^n$, and $[f_j]_{C^{0,\gamma+\varepsilon}}\leq C\norm{u}_{C^{0,\gamma+\varepsilon}_H}=:M$. As $j\to\infty$, $Tf_j\to Tu$ in $\mathcal{S}'$. Since $B$ is a continuous function, $f_jB$ converges uniformly to $uB$ in $B_R(x_0)$, $x_0\in\Real^n$. There exists $0<\delta<R/2$ such that
$$M\int_{B_\delta(0)}\abs{z}^{\varepsilon-n}~dz\leq\frac{\eta}{3}.$$
For $x\in B_{R/2}(x_0)$, we write
$$\int_{\Real^n}(f_j(x)-f_j(z))K(x,z)~dz=\int_{B_\delta(x)}~+\int_{B_\delta^c(x)}~=I+II.$$
Then, by the choice of $\delta$,
$$\abs{I}+\abs{\int_{B_\delta(x)}(u(x)-u(z))K(x,z)~dz}\leq\frac{2}{3}~\eta.$$
We also have
$$\abs{II-\int_{B_\delta^c(x)}(u(x)-u(z))K(x,z)~dz}\leq C\abs{f_j(x)-u(x)}+C\left(\int_{B_\delta^c(x)}\abs{f_j(z)-u(z)}^p~dz\right)^{1/p}\leq\frac{\eta}{3},$$
for sufficiently large $j$, uniformly in $x\in B_{R/2}(x_0)$. Therefore,
$$\int_{\Real^n}(f_j(x)-f_j(z))K(x,z)~dz\rightrightarrows\int_{\Real^n}(u(x)-u(z))K(x,z)~dz,\quad j\to\infty,$$
in $B_{R/2}(x_0)$. Hence, by uniqueness of the limits, $Tu$ is a function that coincides with \eqref{Tu}, and it is a continuous function, because it is the uniform limit of continuous functions.
\end{proof}

\begin{rem}\label{Rem:K L1}
In the context of Proposition \ref{Prop:tecnico}, assume that, instead of having estimate \eqref{K est} on the kernel, we just know that $\abs{K(x,z)}\leq\Phi(x-z)$, where $\Phi\in L^{p'}(\Real^n)$, and $p'$ is the conjugate exponent of some $p$ such that $\frac{n}{\gamma+\varepsilon}<p<\infty$. Then, it is enough to take $u\in C^{0,\alpha}_H$, for some $0<\alpha\leq1$, to get the same conclusion, since the approximation procedure given in the proof above can also be applied in this situation.
\end{rem}

\begin{prop}\label{Prop:tecnico2}
Let $T$ be an operator satisfying the hypotheses of Proposition \ref{Prop:tecnico}, with $0\leq\gamma<1$ and $0<\gamma+\varepsilon\leq1$, for some $0<\varepsilon<1$. Assume that the kernel $K$ and the function $B$ also satisfy:
\begin{enumerate}[(a)]
  \item $\displaystyle \abs{K(x_1,z)-K(x_2,z)}\leq C\frac{\abs{x_1-x_2}}{\abs{x_2-z}^{n+1+\gamma}}~e^{-\frac{\abs{z}\abs{x_2-z}^2}{C}}e^{-\frac{\abs{x_2-z}^2}{C}}$, when $\abs{x_1-z}>2\abs{x_1-x_2}$.
  \item There exists a constant $C>0$ such that $\displaystyle\abs{\int_{\abs{x-z}>r}K(x,z)~dz}\leq Cr^{-\gamma}$, for all $x\in\Real^n$.
  \item For all $x\in\Real^n$, $\abs{B(x)}\leq C(1+\abs{x})^\gamma$, and $\nabla B\in L^\infty(\Real^n)$.
\end{enumerate}
Then $T$ maps $C^{0,\gamma+\varepsilon}_H$ into $C^{0,\varepsilon}_H$ continuously.
\end{prop}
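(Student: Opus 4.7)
The plan is to apply Proposition \ref{Prop:tecnico} (applicable since $u\in C_H^{0,\gamma+\varepsilon}$) to represent $Tu$ pointwise, then bound separately the two seminorms defining $C_H^{0,\varepsilon}$. Throughout we set $M:=\norm{u}_{C_H^{0,\gamma+\varepsilon}}$.

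\textbf{Decay $[Tu]_{M^\varepsilon}$.} The term $u(x)B(x)$ is controlled by (c) and the decay of $u$: $\abs{u(x)B(x)}\leq CM(1+\abs{x})^{-\varepsilon}$. For the integral, split at $\abs{x-z}=1$. On the inner region, combining $\abs{u(x)-u(z)}\leq [u]_{C^{0,\gamma+\varepsilon}}\abs{x-z}^{\gamma+\varepsilon}$ with \eqref{K est} and passing to polar coordinates yields, after substituting $s=\abs{x}\rho/C$,
$$\int_0^1\rho^{\varepsilon-1}e^{-\abs{x}\rho/C}~d\rho=\Big(\frac{C}{\abs{x}}\Big)^\varepsilon\int_0^{\abs{x}/C}s^{\varepsilon-1}e^{-s}~ds\leq C(1+\abs{x})^{-\varepsilon}.$$
On the outer region, $\int_{\abs{x-z}>1}(u(x)-u(z))K(x,z)~dz=u(x)\int_{\abs{x-z}>1}K(x,z)~dz-\int_{\abs{x-z}>1}u(z)K(x,z)~dz$, where the first piece is bounded by $CM(1+\abs{x})^{-\gamma-\varepsilon}$ via (b), and the second decays at least like $e^{-\abs{x}/C}$ using the two Gaussian factors in \eqref{K est}.

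\textbf{H\"older seminorm $[Tu]_{C^{0,\varepsilon}}$.} Fix $x_1,x_2$ and set $r=\abs{x_1-x_2}$; the case $r\geq 1$ follows from the previous step, so assume $r<1$ and split $\Real^n=B_{2r}(x_1)\cup B_{2r}^c(x_1)$. Over $B_{2r}(x_1)$, estimate each integral $\int_{B_{2r}(x_1)}(u(x_i)-u(z))K(x_i,z)~dz$ separately (using $B_{2r}(x_1)\subset B_{3r}(x_2)$ for $i=2$), obtaining $C[u]_{C^{0,\gamma+\varepsilon}}r^\varepsilon$. Over $B_{2r}^c(x_1)$, use the identity
$$(u(x_1)-u(z))K(x_1,z)-(u(x_2)-u(z))K(x_2,z)=(u(x_1)-u(x_2))K(x_1,z)+(u(x_2)-u(z))(K(x_1,z)-K(x_2,z)).$$
The first summand, integrated, equals $(u(x_1)-u(x_2))\int_{\abs{x_1-z}>2r}K(x_1,z)~dz$, bounded by $C[u]_{C^{0,\gamma+\varepsilon}}r^\varepsilon$ via (b). For the second, $\abs{x_1-z}>2r$ forces $\abs{x_2-z}>r$, so (a) and H\"older continuity give $C[u]_{C^{0,\gamma+\varepsilon}}r\int_r^\infty\rho^{\varepsilon-2}~d\rho=\frac{C[u]_{C^{0,\gamma+\varepsilon}}r^\varepsilon}{1-\varepsilon}$, where the assumption $\varepsilon<1$ is crucial for convergence at infinity. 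Finally, the $B$-contribution splits as $(u(x_1)-u(x_2))B(x_1)+u(x_2)(B(x_1)-B(x_2))$: the latter is $\leq CM\norm{\nabla B}_\infty r\leq CMr^\varepsilon$ by (c); the former splits on whether $r(1+\abs{x_1})\leq 1$, in which case H\"older continuity gives $[u]_{C^{0,\gamma+\varepsilon}}r^{\gamma+\varepsilon}(1+\abs{x_1})^\gamma\leq[u]_{C^{0,\gamma+\varepsilon}}r^\varepsilon$, or otherwise the decay of $u$ combined with $(1+\abs{x_2})\sim(1+\abs{x_1})$ yields $CM(1+\abs{x_1})^{-\varepsilon}\leq CMr^\varepsilon$.

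\textbf{Main obstacle.} The principal difficulty is the decay estimate $[Tu]_{M^\varepsilon}$: since $B$ grows polynomially and $K$ is \emph{not} uniformly integrable in $z$ (its singularity at $z=x$ persists after the Gaussian decay), one must carefully combine the decay of $u$, the cancellation in (b), and the Gaussian factor $e^{-\abs{x}\abs{x-z}/C}$. The substitution $s=\abs{x}\rho$ in the near-$x$ piece is what converts Gaussian decay into the sharp polynomial rate $(1+\abs{x})^{-\varepsilon}$.
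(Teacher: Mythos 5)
Your proposal is correct and follows essentially the same strategy as the paper: the same three-way decomposition into inner ball, annular complement, and multiplicative term; the same add-and-subtract trick $\pm u(x_2)K(x_1,z)$ on the outer region that invokes both the smoothness hypothesis (a) and the cancellation hypothesis (b); and the same case split for the $B$-term depending on whether $r(1+\abs{x_1})$ is small. The one genuine variation is in the decay estimate $[Tu]_{M^\varepsilon}$: you split the kernel integral at $\abs{x-z}=1$ and extract $(1+\abs{x})^{-\varepsilon}$ from the inner piece via the substitution $s=\abs{x}\rho/C$ in the Gaussian factor $e^{-\abs{x}\abs{x-z}/C}$, whereas the paper splits at $\abs{x-z}=1/(1+\abs{x})$ so that the inner piece is handled by the power of $\abs{x-z}$ alone and the outer $u(z)$-contribution is summed over dyadic annuli $\abs{x-z}\sim 2^j/(1+\abs{x})$. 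Both routes are valid; yours avoids the dyadic sum at the cost of leaning on the Gaussian already in the inner region, and the two are essentially interchangeable implementations of the same idea.
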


\begin{proof}
Given $x_1,x_2\in\Real^n$, let $B=B(x_1,2\abs{x_1-x_2})$, $\widetilde{B}=B(x_2,4\abs{x_1-x_2})$ and $B'=B(x_2,\abs{x_1-x_2})$. We write
$$Tu(x)=\int_B(u(x)-u(z))K(x,z)dz+\int_{B^c}(u(x)-u(z))K(x,z)dz+u(x)B(x)=I(x)+II(x)+III(x).$$

By \eqref{K est} we have
\begin{align*}
  \abs{I(x_1)-I(x_2)} &\leq \int_B\abs{(u(x_1)-u(z))K(x_1,z)}~dz+\int_{\widetilde{B}}\abs{(u(x_2)-u(z))K(x_2,z)}~dz \\
   &\leq C[u]_{C^{0,{\gamma+\varepsilon}}}\left[\int_B\frac{\abs{x_1-z}^{\gamma+\varepsilon}}{\abs{x_1-z}^{n+\gamma}}~dz+ \int_{\widetilde{B}}\frac{\abs{x_2-z}^{\gamma+\varepsilon}}{\abs{x_2-z}^{n+\gamma}}~dz\right]=C[u]_{C^{0,{\gamma+\varepsilon}}}\abs{x_1-x_2}^\varepsilon.
\end{align*}

For the difference $II(x_1)-II(x_2)$, we add the term $\pm u(x_2)K(x_1,z)$ and we use the smoothness and cancelation properties of the kernel $K(x,z)$ (hypotheses \textit{(a)} and \textit{(b)}) to get:
\begin{align*}
  \abs{II(x_1)-II(x_2)} &\leq \int_{B^c}\abs{(u(x_2)-u(z))(K(x_1,z)-K(x_2,z))}dz+\abs{\int_{B^c}(u(x_1)-u(x_2))K(x_1,z)dz} \\
   &\leq C[u]_{C^{0,{\gamma+\varepsilon}}}\left[\int_{B^c}\abs{x_2-z}^{\gamma+\varepsilon}\frac{\abs{x_1-x_2}}{\abs{x_2-z}^{n+1+\gamma}}~dz+ \abs{x_1-x_2}^{\gamma+\varepsilon}\abs{\int_{B^c}K(x_1,z)dz}\right] \\
   &\leq C[u]_{C^{0,{\gamma+\varepsilon}}}\left[\int_{(B')^c}\frac{\abs{x_1-x_2}}{\abs{x_2-z}^{n+1-\varepsilon}}~dz+ \abs{x_1-x_2}^\varepsilon\right]=C[u]_{C^{0,{\gamma+\varepsilon}}}\abs{x_1-x_2}^\varepsilon.
\end{align*}

If $\abs{x_1-x_2}<\frac{1}{1+\abs{x_1}}$, by \textit{(c)},
\begin{align*}
  \frac{\abs{III(x_1)-III(x_2)}}{\abs{x_1-x_2}^\varepsilon} &\leq \frac{\abs{u(x_1)-u(x_2)}}{\abs{x_1-x_2}^{\gamma+\varepsilon}}\abs{B(x_1)}\abs{x_1-x_2}^\gamma+\abs{u(x_2)}\frac{\abs{B(x_1)-B(x_2)}}{\abs{x_1-x_2}^\varepsilon} \\
   &\leq C[u]_{C^{0,\gamma+\varepsilon}}+[u]_{M^{\gamma+\varepsilon}}\norm{\nabla B}_{L^\infty(\Real^n)}\abs{x_1-x_2}^{1-\varepsilon}\leq C\norm{u}_{C_H^{0,\gamma+\varepsilon}}.
\end{align*}
Assume that $\abs{x_1-x_2}\geq\frac{1}{1+\abs{x_1}}$. Then $1+\abs{x_1}\leq 1+\abs{x_2}+\abs{x_1-x_2}$, which implies
$$\frac{1+\abs{x_1}}{1+\abs{x_2}}\leq1+\frac{\abs{x_1-x_2}}{1+\abs{x_2}},\quad\hbox{and then,}\quad\frac{1}{1+\abs{x_2}}\leq\frac{1}{1+\abs{x_1}}+\frac{\abs{x_1-x_2}}{(1+\abs{x_1})(1+\abs{x_2})}\leq2\abs{x_1-x_2}.$$
With this and hypothesis \textit{(c)} we have
$$\frac{\abs{III(x_1)-III(x_2)}}{\abs{x_1-x_2}^\varepsilon}\leq\abs{u(x_1)B(x_1)}(1+\abs{x_1})^\varepsilon+\abs{u(x_2)B_\sigma(x_2)}2^\varepsilon(1+\abs{x_2})^\varepsilon
\leq C[u]_{M^{\gamma+\varepsilon}}.$$

Let us finally study the growth of $Tu(x)$. For the multiplicative term $uB$ we clearly have $\abs{u(x)B(x)}\leq C[u]_{M^{\gamma+\varepsilon}}(1+\abs{x})^{-\varepsilon}$. Consider next the integral part in the formula for $Tu(x)$, \eqref{Tu}. Since $Tu$ and $B$ are continuous functions, it is enough to consider $\abs{x}>2$. We write
$$\abs{\int_{\Real^n}(u(x)-u(z))K(x,z)~dz}=\abs{\left(\int_{\abs{x-z}<\frac{1}{1+\abs{x}}}+\int_{\abs{x-z}\geq\frac{1}{1+\abs{x}}}\right)~dz}.$$
On one hand,
$$\int_{\abs{x-z}<\frac{1}{1+\abs{x}}}\abs{u(x)-u(z)}\abs{K(x,z)}~dz\leq C[u]_{C^{0,{\gamma+\varepsilon}}}\int_{\abs{x-z}<\frac{1}{1+\abs{x}}}\frac{\abs{x-z}^{\gamma+\varepsilon}}{\abs{x-z}^{n+\gamma}}~dz=[u]_{C^{0,{\gamma+\varepsilon}}}\frac{C}{(1+\abs{x})^\varepsilon}.$$
On the other hand, by \textit{(b)},
$$\abs{u(x)}\abs{\int_{\abs{x-z}\geq\frac{1}{1+\abs{x}}}K(x,z)~dz}\leq \frac{[u]_{M^{\gamma+\varepsilon}}}{(1+\abs{x})^{\gamma+\varepsilon}}~C(1+\abs{x})^\gamma=[u]_{M^{\gamma+\varepsilon}}\frac{C}{(1+\abs{x})^\varepsilon}.$$
Since $\abs{x-z}\geq\frac{1}{1+\abs{x}}$ implies that $\frac{1}{1+\abs{z}}\leq2\abs{x-z}$, applying \eqref{K est} we get
\begin{align*}
  \int_{\abs{x-z}\geq\frac{1}{1+\abs{x}}}\abs{u(z)}\abs{K(x,z)}~dz &\leq C[u]_{M^{\gamma+\varepsilon}}\int_{\abs{x-z}\geq\frac{1}{1+\abs{x}}}\frac{1}{(1+\abs{z})^{\gamma+\varepsilon}}~\frac{e^{-\frac{\abs{x}\abs{x-z}}{C}}e^{-\frac{\abs{x-z}^2}{C}}}{\abs{x-z}^{n+\gamma}}~dz \\
   &\leq C[u]_{M^{\gamma+\varepsilon}}\int_{\abs{x-z}\geq\frac{1}{1+\abs{x}}}\abs{x-z}^{\gamma+\varepsilon}\frac{e^{-\frac{\abs{x}\abs{x-z}}{C}}e^{-\frac{\abs{x-z}^2}{C}}}{\abs{x-z}^{n+\gamma}}~dz \\
   &= C[u]_{M^{\gamma+\varepsilon}}\sum_{j=0}^\infty\int_{\abs{x-z}\sim\frac{2^j}{1+\abs{x}}}\frac{e^{-\frac{\abs{x}\abs{x-z}}{C}}e^{-\frac{\abs{x-z}^2}{C}}}{\abs{x-z}^{n-\varepsilon}}~dz \\
   &\leq [u]_{M^{\gamma+\varepsilon}}\frac{C}{(1+\abs{x})^\varepsilon}\sum_{j=0}^\infty2^{j\varepsilon}e^{-\frac{2^j}{C'}}=[u]_{M^{\gamma+\varepsilon}}\frac{C}{(1+\abs{x})^\varepsilon},
\end{align*}
where in the last line the constant $C'$ appearing in the exponential is independent of $x$ because $\abs{x}\abs{x-z}\sim2^j$. Therefore, by pasting the estimates above, the result is proved.
\end{proof}

\section{The operators}\label{Section:Operators}

In this section we give the pointwise definitions, in the class $C^{k,\alpha}_H$, of all the operators involved.

\section*{The heat-diffusion semigroup: $e^{-tH}$}

In our paper, the kernel of the heat-diffusion semigroup generated by $H$ will play an essential role. We shall need the pointwise formula for it.

Recall (see \cite{Thangavelu}) that the eigenfunctions of $H$ are the multi-dimensional Hermite functions $h_\nu(x)=e^{-\abs{x}^2/2}\Psi_\nu(x)$, $\nu=(\nu_1,\ldots,\nu_n)\in\mathbb{N}_0^n$, where $\Psi_\nu$ are the multi-dimensional Hermite polynomials, with positive eigenvalues: $Hh_\nu=(2\abs{\nu}+n)h_\nu,$ for all $\nu\in\mathbb{N}_0^n$, $\abs{\nu}=\nu_1+\cdots+\nu_n$. Moreover, $\overline{\spa\set{h_\nu:\nu\in\mathbb{N}_0^n}}=L^2(\Real^n)$. The heat-diffusion semigroup generated by $H$ is given as an integral operator: for $u\in\underset{1\leq p\leq\infty}{\bigcup}L^p(\Real^n)$,
\begin{multline}\label{heat int}
e^{-tH}u(x)=\int_{\Real^n}G_t(x,z)u(z)~dz \\
=\int_{\Real^n}\left[\sum_{j=0}^\infty e^{-t(2j+n)}\sum_{\abs{\nu}=j}h_\nu(x)h_\nu(z)\right]u(z)~dz =\int_{\Real^n}\frac{e^{-\left[\frac{1}{2}\abs{x-z}^2\coth2t+x\cdot z\tanh t\right]}}{(2\pi\sinh2t)^{n/2}}~u(z)~dz.
\end{multline}
Note that, for all $\nu\in\mathbb{N}_0^n$, $e^{-tH}h_\nu(x)=e^{-t(2\abs{\nu}+n)}h_\nu(x)$, $t\geq0$. With the following change of parameters (due to S. Meda)
\begin{equation}\label{Meda transf}
t=\frac{1}{2}\log\frac{1+s}{1-s},\qquad t\in(0,\infty),~s\in(0,1),
\end{equation}
the heat-diffusion kernel can be expressed as
$$G_{t(s)}(x,z)=\sum_{j=0}^\infty\left(\frac{1-s}{1+s}\right)^{j+n/2}\sum_{\abs{\nu}=j}h_\nu(x)h_\nu(z)=\left(\frac{1-s^2}{4\pi s}\right)^{n/2}e^{-\frac{1}{4}\left[s\abs{x+z}^2+\frac{1}{s}\abs{x-z}^2\right]},\quad s\in(0,1).$$

\section*{The fractional operators: $H^\sigma$ and $(H\pm2k)^\sigma$, $k\in\mathbb{N}$}

Let us first analyze the fractional harmonic oscillator $H^\sigma$. Let $\langle f,h_\nu\rangle=\int_{\Real^n}f(z)h_\nu(z)~dz$. If $f\in\mathcal{S}$, the Hermite series expansion $\sum_\nu\langle f,h_\nu\rangle h_\nu=\sum_{k=0}^\infty\sum_{\abs{\nu}=k}\langle f,h_\nu\rangle h_\nu$, converges to $f$ uniformly in $\Real^n$ (and also in $L^2(\Real^n)$), since $\norm{h_\nu}_{L^\infty(\Real^n)}\leq C$, for all $\nu\in\mathbb{N}_0^n$, and, for each $m\in\mathbb{N}$, we have $\abs{\langle f,h_\nu\rangle}\leq\norm{H^mf}_{L^2(\Real^n)}(2\abs{\nu}+n)^{-m}$. As $e^{-tH}f(x)=\sum_\nu e^{-t(2\abs{\nu}+n)}\langle f,h_\nu\rangle h_\nu$, from \eqref{H sig heat} we get $H^\sigma f=\sum_\nu(2\abs{\nu}+n)^\sigma\langle f,h_\nu\rangle h_\nu$, and the series converges uniformly in $\Real^n$. As a consequence of the last reasonings, $H^\sigma$ is a bounded operator in $\mathcal{S}$. Note that, by using Hermite series expansions, we can check that $\langle H^\sigma f,g\rangle=\langle f,H^\sigma g\rangle$, for all $f,g\in\mathcal{S}$, and $H^1f=Hf$, $H^0f=f$.

The proof of the identity \eqref{H sig f point} can be found in \cite{Stinga-Torrea} and we sketch it here for completeness. Since $e^{-tH}1(x)$ is not a constant function, we have
\begin{eqnarray*}
  \lefteqn{\frac{1}{\Gamma(-\sigma)}\int_0^\infty\left(e^{-tH}f(x)-f(x)\right)~\frac{dt}{t^{1+\sigma}}=\frac{1}{\Gamma(-\sigma)}\int_0^\infty\left(\int_{\Real^n}G_t(x,z)f(z)~dz-f(x)\right)~\frac{dt}{t^{1+\sigma}}} \\
   &=& \frac{1}{\Gamma(-\sigma)}\int_0^\infty\left[\int_{\Real^n}G_t(x,z)(f(z)-f(x))~dz+f(x)\left(\int_{\Real^n}G_t(x,z)~dz-1\right)\right]~\frac{dt}{t^{1+\sigma}} \\
   &=& \frac{1}{\Gamma(-\sigma)}\int_0^\infty\int_{\Real^n}G_t(x,z)(f(z)-f(x))~dz~\frac{dt}{t^{1+\sigma}}+f(x)\frac{1}{\Gamma(-\sigma)}\int_0^\infty\left(e^{-tH}1(x)-1\right)\frac{dt}{t^{1+\sigma}} \\
   &=& \int_{\Real^n}(f(x)-f(z))F_\sigma(x,z)~dz+f(x)B_\sigma(x).
\end{eqnarray*}
The subtle point in the calculations above is to justify the last equality. If $0<\sigma<1/2$, the last integral is absolutely convergent. In the case $1/2\leq\sigma<1$, a cancelation is involved (which is also exploited in the proof of Theorem \ref{Thm:H sig C alfa} below), that allows to show that the integral converges as a principal value.

As we said in the introduction, some type of \textit{derivatives} (first order partial differential operators) are usually considered when working with the operator $H$. Recall the factorization
$$H=\frac{1}{2}\sum_{i=1}^n(A_iA_{-i}+A_{-i}A_i),$$
where
\begin{equation}\label{derivatives}
A_i=\partial_{x_i}+x_i,\quad A_{-i}=A_i^\ast=-\partial_{x_i}+x_i,\qquad i=1,\ldots,n.
\end{equation}
In the Harmonic Analysis associated to $H$, the operators $A_i$, $1\leq\abs{i}\leq n$, play the role of the classical partial derivatives $\partial_{x_i}$ in the Euclidean Harmonic Analysis (see \cite{Thangavelu}, \cite{Bongioanni-Torrea}, \cite{Harboure-deRosa-Segovia-Torrea}, \cite{Stempak-Torrea Acta}). Now, it is natural to consider the classes of functions whose $k$-th \textit{derivatives} are in $C^{0,\alpha}_H$.

\begin{defn}\label{ckalfa}
For each $k\in\mathbb{N}$, we define the Hermite-H\"{o}lder space $C_H^{k,\alpha}$, $0<\alpha\leq1$, as the set of all  functions $u \in C^k(\Real^n)$ such that the following norm is finite:
$$\norm{u}_{C_H^{k,\alpha}}=[u]_{M^\alpha}+\sum_{\begin{subarray}{c}1\leq\abs{i_1},\ldots,\abs{i_m}\leq n\\1\leq m\leq k\end{subarray}}[A_{i_1}\cdots A_{i_m}u]_{M^\alpha}+\sum_{1\leq\abs{i_1},\ldots,\abs{i_k}\leq n}[A_{i_1}\cdots A_{i_k}u]_{C^{0,\alpha}}.$$
\end{defn}

We are ready to show that the pointwise formula for $H^\sigma u$, when $u$ belongs to the H\"older class $C^{k,\alpha}_H$, is the same as \eqref{H sig f point}.

\begin{thm}\label{Thm:H sig C alfa}
Let $0<\alpha\leq1$ and $0<\sigma<1$.
\begin{enumerate}[(1)]
   \item If $0<\alpha-2\sigma<1$ and $u\in C^{0,\alpha}_H$, then
   \begin{equation}\label{H sig u point}
   H^\sigma u(x)=\int_{\Real^n}(u(x)-u(z))F_\sigma(x,z)~dz+u(x)B_\sigma(x),\qquad x\in\Real^n,
   \end{equation}
   and the integral converges absolutely.
   \item If $-1<\alpha-2\sigma\leq0$ and $u\in C^{1,\alpha}_H$, then $H^\sigma u(x)$ is given by \eqref{H sig u point}, where the integral converges as a principal value.
   \item When $-2<\alpha-2\sigma\leq-1$, it is enough to take $u\in C^{1,1}_H$ to have the conclusion of (2).
\end{enumerate}
In the three cases, $H^\sigma u\in C(\Real^n)$.
\end{thm}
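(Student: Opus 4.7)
The plan is to treat case (1) as a direct application of Proposition \ref{Prop:tecnico}, and to handle cases (2) and (3) by adapting its approximation scheme with an additional cancellation to accommodate a principal-value integral.

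For case (1), with $0<\alpha-2\sigma<1$ and $u\in C^{0,\alpha}_H$, I set $\gamma=2\sigma$ and $\varepsilon=\alpha-2\sigma$ in Proposition \ref{Prop:tecnico}: then $0<\gamma<1$ and $0<\gamma+\varepsilon=\alpha\leq 1$. The remaining hypotheses are in place: $H^\sigma$ is bounded and symmetric on $\mathcal{S}$ (spelled out just before the theorem), the Gaussian-type bound \eqref{K est} for $F_\sigma$ with exponent $-(n+2\sigma)$ is what Lemma \ref{Lem:F sig est} provides, and $B_\sigma$ is continuous with polynomial growth (a companion lemma from Section \ref{Section:Tecnica}). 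Proposition \ref{Prop:tecnico} then delivers the formula \eqref{H sig u point} with absolutely convergent integral, and continuity of $H^\sigma u$ as a locally uniform limit.

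In cases (2) and (3) we have $2\sigma\geq\alpha$, so $(u(x)-u(z))F_\sigma(x,z)$ is not integrable near $z=x$ and we must exploit cancellation provided by the extra smoothness of $u$. The decomposition
\begin{align*}
\int_{\abs{x-z}>\eta}(u(x)-u(z))F_\sigma(x,z)\,dz
&=\int_{\abs{x-z}>\eta}\bigl[u(x)-u(z)-\nabla u(x)\cdot(x-z)\chi_{\abs{x-z}<1}\bigr]F_\sigma(x,z)\,dz\\
&\quad +\nabla u(x)\cdot\int_{\eta<\abs{x-z}<1}(x-z)\,F_\sigma(x,z)\,dz
\end{align*}
makes the first integral absolutely convergent uniformly in $\eta$: by $u\in C^{1,\alpha}_H$ (case 2) or $u\in C^{1,1}_H$ (case 3), the Taylor remainder is $O(\abs{x-z}^{1+\alpha})$ or $O(\abs{x-z}^2)$, and combined with the singularity $\abs{x-z}^{-n-2\sigma}$ coming from \eqref{K est} this is locally integrable precisely under the standing hypotheses $\alpha-2\sigma>-1$ or $\alpha-2\sigma>-2$, while far from the diagonal the Gaussian factor in \eqref{K est} controls the tail. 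The second integral is shown to have a limit as $\eta\to 0$ by splitting $F_\sigma(x,z)=c_\sigma\abs{x-z}^{-n-2\sigma}+R_\sigma(x,z)$, via the Meda change of variables \eqref{Meda transf} and the explicit formula for $G_t$: the odd leading term vanishes by the spherical symmetry $z\mapsto 2x-z$, and $R_\sigma$ carries a strictly weaker singularity so that $(x-z)R_\sigma(x,z)$ is absolutely integrable near the diagonal. Equivalently, the principal value can be read off from \eqref{H sig heat} using $e^{-tH}u(x)-u(x)=O(t^{(1+\alpha)/2})+O(t)$, which makes $\int_0^\infty t^{-1-\sigma}(e^{-tH}u(x)-u(x))\,dt$ convergent under the same thresholds.

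With the principal value in hand, the approximation scheme of Proposition \ref{Prop:tecnico} goes through: choose $f_j\in\mathcal{S}$ converging to $u$ uniformly on compacts and in $L^p$, with uniformly bounded $C^{1,\alpha}_H$ or $C^{1,1}_H$ seminorms and $\nabla f_j\to\nabla u$ locally uniformly. Each $f_j$ satisfies \eqref{H sig u point} pointwise; the absolutely convergent piece in the decomposition above passes to the limit by dominated convergence, and the principal-value piece is $\nabla f_j(x)$ times a fixed limit, so it converges by local uniform convergence of gradients. Distributional convergence $H^\sigma f_j\to H^\sigma u$ together with uniqueness of limits yields \eqref{H sig u point} for $u$, and $H^\sigma u\in C(\Real^n)$ follows as a locally uniform limit of continuous functions. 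The main obstacle is the extraction of spherical cancellation from the non-symmetric Hermite kernel $F_\sigma$, for which the splitting $F_\sigma=c_\sigma\abs{x-z}^{-n-2\sigma}+R_\sigma$ with sharp bounds on $R_\sigma$ is the crux; the remaining steps are routine Taylor estimates together with the approximation bookkeeping already set up in Proposition \ref{Prop:tecnico}.
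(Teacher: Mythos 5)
Your case (1) is exactly the paper's proof: $\gamma=2\sigma$, $\varepsilon=\alpha-2\sigma$ in Proposition \ref{Prop:tecnico} with Lemmas \ref{Lem:F sig est} and \ref{Lem:B sig est}. For cases (2) and (3) the overall structure is the same as the paper's — subtract the first-order Taylor polynomial of $u$ at $x$, show the remainder integral is absolutely convergent under the stated thresholds, and dispose of the linear term $\nabla u(x)\cdot(x-z)$ by pairing it against an even-in-$(x-z)$ leading piece of the kernel — but your concrete realization of the cancellation differs. You propose a splitting $F_\sigma(x,z)=c_\sigma\abs{x-z}^{-n-2\sigma}+R_\sigma(x,z)$ with $R_\sigma$ strictly less singular; the paper avoids that global asymptotic and instead applies the Mean Value Theorem inside the Meda integral to the Gaussian $\psi_{s,r}(x)=e^{-\frac14[s\abs{x}^2+r^2/s]}$, writing $e^{-\frac14[s\abs{2x-rz'}^2+r^2/s]}=e^{-\frac14[s\abs{2x}^2+r^2/s]}+R_0\psi$ with $\abs{R_0\psi}\leq Cs^{1/2}re^{-r^2/(8s)}$. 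The frozen factor $e^{-\frac14[s\abs{2x}^2+r^2/s]}$ is radial in $z'$, so the odd term vanishes over the sphere, and the paper's bound on $R_0\psi$ immediately gives $\abs{I_1}\leq C\delta^{3-2\sigma}$ and $\abs{I_2}\leq C\delta^{\alpha-2\sigma+1}$. Your version reaches the same thresholds, but the claim that $R_\sigma$ has a strictly weaker singularity is stated and not proved — the correction inside the Meda integral involves the factor $e^{-s\abs{x+z}^2/4}$, and extracting the translation-invariant profile $c_\sigma\abs{x-z}^{-n-2\sigma}$ also requires comparing $\int_0^1d\mu_\sigma$ to $\int_0^\infty ds/s^{1+\sigma}$; neither step is entirely trivial and neither is covered by the lemmas in Section \ref{Section:Tecnica}. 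The paper's MVT-based decomposition is cheaper because it only needs the radial symmetry of the frozen Gaussian, not an asymptotic expansion of $F_\sigma$. Your alternative justification via $e^{-tH}u(x)-u(x)=O(t^{(1+\alpha)/2})+O(t)$ is in the same spirit and would also suffice, but again that estimate needs to be proved (the odd Gaussian moment of $G_t$ does not vanish exactly since $G_t$ is not translation-invariant, only $O(t)$). The approximation bookkeeping at the end matches the paper, including the observation that the gradients $\nabla f_j$ must converge locally uniformly. In short: structurally identical, technically a variant, with one key estimate asserted rather than established.
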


\begin{proof}
If $0<\alpha-2\sigma<1$, then $\sigma<1/2$. The properties of $F_\sigma$ and $B_\sigma$ established in Lemmas \ref{Lem:F sig est} and \ref{Lem:B sig est} (see Section \ref{Section:Tecnica}), allow us to apply Proposition \ref{Prop:tecnico}, with $K(x,z)=F_\sigma(x,z)$, $B=B_\sigma$ and $\gamma=2\sigma<1$, to get \textit{(1)}.

Under the hypotheses of (2), we will take advantage of a cancelation to show that the integral in \eqref{H sig u point} is well defined. Suppose that $\delta>0$. By Lemma \ref{Lem:F sig est},
$$\int_{\abs{x-z}\geq\delta}\abs{u(x)-u(z)}F_\sigma(x,z)~dz\leq C_\delta\norm{u}_{L^\infty(\Real^n)}\int_{\abs{x-z}\geq\delta}e^{-\frac{\abs{x-z}^2}{C}}~dz<\infty.$$
For $\rho\in\Real$, the change of parameters \eqref{Meda transf} produces
$$\frac{dt}{t^{1+\rho}}=d\mu_\rho(s):=\frac{ds}{(1-s^2)\left(\frac{1}{2}\log\frac{1+s}{1-s}\right)^{1+\rho}},\qquad t\in(0,\infty),~s\in(0,1),$$
so that,
\begin{equation}\label{F sig s}
F_\sigma(x,z)=\frac{1}{-\Gamma(-\sigma)}\int_0^1\left(\frac{1-s^2}{4\pi s}\right)^{n/2}e^{-\frac{1}{4}\left[s\abs{x+z}^2+\frac{1}{s}\abs{x-z}^2\right]}d\mu_\sigma(s),
\end{equation}
which, up to the multiplicative constant $1/(-\Gamma(-\sigma))$, gives,
\begin{align*}
  I &= \int_{\abs{z}<\delta}(u(x)-u(x-z))F_\sigma(x,x-z)~dz \\
   &= \int_0^\delta r^{n-1}\int_{\abs{z'}=1}(u(x)-u(x-rz'))\int_0^1\left(\frac{1-s^2}{4\pi s}\right)^{n/2}e^{-\frac{1}{4}\left[s\abs{2x-rz'}^2+\frac{r^2}{s}\right]}~d\mu_\sigma(s)~dS(z')~dr.
\end{align*}
By the smoothness of $u$, $u(x)-u(x-rz')=\nabla u(x)(rz')+R_1u(x,rz')$, with $\abs{R_1u(x,rz')}\leq[\nabla u]_{C^{0,\alpha}}r^{1+\alpha}$. We apply the Mean Value Theorem to the function $\psi(x)=\psi_{s,r}(x)=e^{-\frac{1}{4}\left[s\abs{x}^2+\frac{r^2}{s}\right]}$, to see that
$e^{-\frac{1}{4}\left[s\abs{2x-rz'}^2+\frac{r^2}{s}\right]}=e^{-\frac{1}{4}\left[s\abs{2x}^2+\frac{r^2}{s}\right]}+R_0\psi(x,rz')$, with $\abs{R_0\psi(x,rz')}\leq Cs^{1/2}re^{-r^2/(8s)}$. Therefore,
\begin{align*}
  I &= \int_0^\delta r^{n-1}\int_{\abs{z'}=1}\nabla u(x)(rz')\int_0^1\left(\frac{1-s^2}{4\pi s}\right)^{n/2}R_0\psi(x,rz')~d\mu_\sigma(s)~dS(z')~dr \\
   &\quad +\int_0^\delta r^{n-1}\int_{\abs{z'}=1}R_1u(x,rz')\int_0^1\left(\frac{1-s^2}{4\pi s}\right)^{n/2}\left(e^{-\frac{1}{4}\left[s\abs{2x}^2+\frac{r^2}{s}\right]}+R_0\psi(x,rz')\right)d\mu_\sigma(s)dS(z')dr \\ &=: I_1+I_2.
\end{align*}
Note that
\begin{equation}\label{est dmu rho}
d\mu_\rho(s)\sim\frac{ds}{s^{1+\rho}},~s\sim 0,\qquad d\mu_\rho(s)\sim\frac{ds}{(1-s)(-\log(1-s))^{1+\rho}},~s\sim1.
\end{equation}
With the estimates on $R_1u$ and $R_0\psi$ given above and \eqref{est dmu rho}, we obtain
$$\abs{I_1}\leq C\abs{\nabla u(x)}\int_0^\delta r^{n+1}\int_0^1\left(\frac{1-s}{s}\right)^{n/2}s^{1/2}e^{-\frac{r^2}{8s}}d\mu_\sigma(s)~dr\leq C\abs{\nabla u(x)}\int_0^\delta\frac{r^{n+1}}{r^{n-1+2\sigma}}~dr=C\delta^{3-2\sigma},$$
and
$$\abs{I_2}\leq C[\nabla u]_{C^{0,\alpha}}\int_0^\delta r^{n+\alpha}\int_0^1\left(\frac{1-s}{s}\right)^{n/2}e^{-\frac{r^2}{4s}}~d\mu_\sigma(s)~dr\leq C[\nabla u]_{C^{0,\alpha}}\int_0^\delta\frac{r^{n+\alpha}}{r^{n+2\sigma}}~dr=C\delta^{\alpha-2\sigma+1}.$$
Thus, the integral in \eqref{H sig u point} converges as a principal value. The same happens if we take $u\in C^{1,1}_H$: we repeat the argument above, but applying in $I_2$ the estimate $\abs{R_1u(x,rz')}\leq[\nabla u]_{C^{0,1}}\cdot r^2$.

To obtain the conclusions of \textit{(2)} and \textit{(3)}, we note that the approximation procedure used in the proof of Proposition \ref{Prop:tecnico} can be applied here (with the estimate $[\nabla f_j]_{C^{0,\alpha}}\leq C\norm{u}_{C^{1,\alpha}_H}=M$).
\end{proof}

\begin{rem}
As in \cite{Stinga-Torrea} (and \cite{Silvestre Thesis,Silvestre CPAM} for $(-\Delta)^\sigma$), some easy maximum and comparison principles can be derived from Theorem \ref{Thm:H sig C alfa}. For instance, take $0<\alpha\leq1$, $0<\sigma<1$, and $u,v$ in the class $C_H^{0,\alpha}$ (or $C_H^{1,\alpha}$, depending on the value of $\alpha-2\sigma$), such that $u\geq v$ in $\Real^n$, with $u(x_0)=v(x_0)$ for some $x_0\in\Real^n$. Then $H^\sigma u(x_0)\leq H^\sigma v(x_0)$. Moreover, $H^\sigma u(x_0)=H^\sigma v(x_0)$ only when $u\equiv v$.
\end{rem}

In order to prove the regularity estimates for $H^\sigma$, we will have to work with the derivatives of $H^\sigma$, that is, with operators of the type $A_iH^\sigma$, $1\leq\abs{i}\leq n$. We recall that, for all $\nu\in\mathbb{N}_0^n$, we have
$$A_ih_\nu=(2\nu_i)^{1/2}h_{\nu-e_i},\quad A_{-i}h_\nu=(2\nu_i+2)^{1/2}h_{\nu+e_i},\qquad 1\leq i\leq n,$$
where $e_i$ is the $i$-th coordinate vector in $\mathbb{N}_0^n$. Then, for all $f\in \mathcal{S}$ and $1\leq i\leq n$,
$$A_if=\sum_\nu(2\nu_i)^{1/2}\langle f,h_\nu\rangle h_{\nu-e_i},\quad A_{-i}f=\sum_\nu(2\nu_i+2)^{1/2}\langle f,h_\nu\rangle h_{\nu+e_i},$$
and both series converge uniformly in $\Real^n$.

\begin{rem}\label{Rem:H y Ai}
Let $b\in\Real$. Then, by using Hermite series expansions, it is easy to check that for all $f\in\mathcal{S}$ and $1\leq i\leq n$, we have
\begin{align*}
A_iH^bf &= (H+2)^bA_if, & H^bA_if &= A_i(H-2)^bf, \\
A_{-i}H^bf &= (H-2)^bA_{-i}f, & H^bA_{-i}f &= A_{-i}(H+2)^bf,
\end{align*}
where we defined $(H\pm2)^bh_\nu:=(2\abs{\nu}+n\pm2)^bh_\nu$.
\end{rem}

Consequently, we need to study the operators $(H\pm2k)^\sigma$, $k\in\mathbb{N}$.

Let us start with $(H+2k)^\sigma$, $k$ a positive integer. For $f\in\mathcal{S}$ and $k\in\mathbb{N}$ we define
$$(H+2k)^\sigma f(x)=\sum_\nu(2\abs{\nu}+n+2k)^\sigma\langle f,h_\nu\rangle h_\nu(x),\qquad x\in\Real^n.$$
The series above converges in $L^2(\Real^n)$ and uniformly in $\Real^n$, it defines a Schwartz's class function, and
$$(H+2k)^\sigma f(x)=\frac{1}{\Gamma(-\sigma)}\int_0^\infty\left(e^{-2kt}e^{-tH}f(x)-f(x)\right)~\frac{dt}{t^{1+\sigma}},\qquad x\in\Real^n.$$

By using Lemmas \ref{Lem:F sig est} and \ref{Lem:B sig est} stated in Section \ref{Section:Tecnica}, the following Theorem can be proved in a parallel way to Theorem \ref{Thm:H sig C alfa}. We leave the details to the interested reader.

\begin{thm}\label{Thm:H+2k}
Let $u$ be as in Theorem \ref{Thm:H sig C alfa}. Then $(H+2k)^\sigma u\in\mathcal{S}'\cap C(\Real^n)$, and
$$(H+2k)^\sigma u(x)=\int_{\Real^n}(u(x)-u(z))F_{2k,\sigma}(x,z)~dz+u(x)B_{2k,\sigma}(x),\qquad x\in\Real^n,$$
where
$$F_{2k,\sigma}(x,z)=\frac{1}{-\Gamma(-\sigma)}\int_0^\infty e^{-2kt}G_t(x,z)~\frac{dt}{t^{1+\sigma}}=\frac{1}{-\Gamma(-\sigma)}\int_0^1\left(\frac{1-s}{1+s}\right)^kG_{t(s)}(x,z)~d\mu_\sigma(s),$$
and
$$B_{2k,\sigma}(x)=\frac{1}{\Gamma(-\sigma)}\int_0^1\left[\left(\frac{1-s}{1+s}\right)^k\left(\frac{1-s^2}{2\pi(1+s^2)}\right)^{n/2}e^{-\frac{s}{1+s^2}\abs{x}^2}-1\right]~d\mu_\sigma(s).$$
\end{thm}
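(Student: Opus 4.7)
The plan is to repeat the scheme of Theorem \ref{Thm:H sig C alfa}, carrying the extra factor $e^{-2kt}$ along the entire argument. Starting from the spectral expression
\[
(H+2k)^\sigma f(x)=\frac{1}{\Gamma(-\sigma)}\int_0^\infty\bigl(e^{-2kt}e^{-tH}f(x)-f(x)\bigr)\,\frac{dt}{t^{1+\sigma}},\qquad f\in\mathcal{S},
\]
I would rewrite the integrand, using $e^{-tH}f(x)=\int_{\Real^n}G_t(x,z)f(z)\,dz$, as
\[
e^{-2kt}\!\int_{\Real^n}\! G_t(x,z)(f(z)-f(x))\,dz+f(x)\Bigl(e^{-2kt}\!\int_{\Real^n}\! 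G_t(x,z)\,dz-1\Bigr).
\]
A Fubini argument, identical to the one used to derive \eqref{H sig f point}, yields the pointwise identity with kernel $F_{2k,\sigma}$ and multiplier $B_{2k,\sigma}$ for Schwartz $f$. The explicit form of $B_{2k,\sigma}$ then follows by computing $\int G_{t(s)}(x,z)\,dz$ via Meda's change of variable \eqref{Meda transf}; completing the square in the exponent of the Mehler kernel produces the Gaussian factor $e^{-s|x|^2/(1+s^2)}$ with the stated $s$-dependent prefactor.

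To pass from $f\in\mathcal{S}$ to $u\in C_H^{k,\alpha}$ it is enough to verify that the analogues of Lemmas \ref{Lem:F sig est} and \ref{Lem:B sig est} hold for the shifted kernel and multiplier. The extra factor $\bigl((1-s)/(1+s)\bigr)^k\in(0,1]$ on $(0,1)$ only enhances the decay of the integrand in \eqref{F sig s}, so the Gaussian pointwise estimate of Lemma \ref{Lem:F sig est} carries over verbatim to $F_{2k,\sigma}$ (with possibly smaller constants near $s=1$, i.e.\ near $t=\infty$). The polynomial growth bound $|B_{2k,\sigma}(x)|\leq C(1+|x|)^{2\sigma}$ is obtained by splitting the $s$-integral over $(0,1/2)$ and $(1/2,1)$ as in Lemma \ref{Lem:B sig est}, with the factor $\bigl((1-s)/(1+s)\bigr)^k$ absorbed harmlessly. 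Case \textit{(1)} of Theorem \ref{Thm:H sig C alfa} then follows immediately from Proposition \ref{Prop:tecnico} applied to $K=F_{2k,\sigma}$, $B=B_{2k,\sigma}$ and $\gamma=2\sigma$.

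For cases \textit{(2)} and \textit{(3)} the principal value representation is obtained by reproducing, line by line, the cancellation argument of Theorem \ref{Thm:H sig C alfa}: working in polar coordinates near the diagonal, one uses the first order Taylor expansions $u(x)-u(x-rz')=\nabla u(x)\cdot(rz')+R_1u(x,rz')$ and $\psi_{s,r}(2x-rz')=\psi_{s,r}(2x)+R_0\psi(x,rz')$, and exploits the antisymmetry $\int_{|z'|=1}\nabla u(x)\cdot z'\,dS(z')=0$ to kill the leading contribution $\nabla u(x)\cdot z'\cdot\psi_{s,r}(2x)$. Since the estimates \eqref{est dmu rho} on $d\mu_\sigma$ are untouched by the weight $\bigl((1-s)/(1+s)\bigr)^k$, the resulting bounds $I_1=O(\delta^{3-2\sigma})$ and $I_2=O(\delta^{\alpha-2\sigma+1})$ persist unchanged. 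Finally, continuity of $(H+2k)^\sigma u$ and its identification as a tempered distribution with the pointwise formula follow from the approximation scheme of Proposition \ref{Prop:tecnico} applied to $f_j=\zeta(\cdot/j)(u\ast W_{1/j})$. The only slightly delicate point is showing that $e^{-2kt}e^{-tH}1(x)-1=O(t)$ as $t\to 0^+$ (needed for convergence of the integral defining $B_{2k,\sigma}$ near $t=0$), but this is immediate from the Taylor expansion of $e^{-2kt}$ combined with the bound $\bigl|e^{-tH}1(x)-1\bigr|\leq Ct(1+|x|^2)$ already exploited for $B_\sigma$.
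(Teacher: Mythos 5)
Your proposal is correct and follows exactly the route the paper indicates (and leaves to the reader): reproduce the argument of Theorem~\ref{Thm:H sig C alfa} with the extra innocuous factor $e^{-2kt}=\bigl((1-s)/(1+s)\bigr)^{k}\leq1$ carried through, invoking Lemmas~\ref{Lem:F sig est} and~\ref{Lem:B sig est} for the size and smoothness of $F_{2k,\sigma}$ and $B_{2k,\sigma}$, and Proposition~\ref{Prop:tecnico} together with the approximation scheme for the passage from $\mathcal{S}$ to $C^{k,\alpha}_H$. You even make explicit the spherical antisymmetry that kills the term $\nabla u(x)\cdot rz'\,\psi_{s,r}(2x)$, which the paper uses implicitly in its $I=I_1+I_2$ decomposition.
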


Consider next the operators $(H-2k)^\sigma$, $k\in\mathbb{N}$. We say that a function $f\in\mathcal{S}$ belongs to the space $\mathcal{S}_k$ if
$$\int_{\Real^n}f(z)h_\nu(z)~dz=0,\quad\hbox{ for all }\nu\in\mathbb{N}_0^n\hbox{ such that }\abs{\nu}<k.$$
For $f\in\mathcal{S}_k$, we define
$$(H-2k)^\sigma f(x)=\sum_{\abs{\nu}\geq k}(2\abs{\nu}+n-2k)^\sigma\langle f,h_\nu\rangle h_\nu(x).$$
Note that, on $\mathcal{S}_k$, the operator $(H-2k)^\sigma$ is positive. Let
$$\phi_{2k}(x)=\phi_{2k}(x,z,s)=\left[\sum_{j=0}^{k-1}\left(\frac{1-s}{1+s}\right)^{j+n/2}\sum_{\abs{\nu}=j}h_\nu(x)h_\nu(z)\right]\chi_{(1/2,1)}(s),$$
the sum of the first $(k-1)$-terms of the series defining $G_{t(s)}(x,z)$, for $s\in(1/2,1)$. Then, the heat-diffusion semigroup generated by $H-2k$:
$$e^{-t(H-2k)}f(x)=\int_{\Real^n}e^{2kt}G_t(x,z)f(z)~dz=\int_{\Real^n}\left(\frac{1+s}{1-s}\right)^kG_{t(s)}(x,z)f(z)~dz=e^{-t(s)(H-2k)}f(x),$$
can be written as
$$e^{-t(s)(H-2k)}f(x)=\int_{\Real^n}\left(\frac{1+s}{1-s}\right)^k\left[G_{t(s)}(x,z)-\phi_{2k}(x,z,s)\right]f(z)~dz,\qquad f\in\mathcal{S}_k.$$
Moreover,
\begin{align*}
  (H-2k)^\sigma f(x) &= \frac{1}{\Gamma(-\sigma)}\int_0^\infty\left(e^{-t(H-2k)}f(x)-f(x)\right)~\frac{dt}{t^{1+\sigma}} \\
   &= \frac{1}{\Gamma(-\sigma)}\int_0^1\left(e^{-t(s)(H-2k)}f(x)-f(x)\right)~d\mu_\sigma(s).
\end{align*}
The following idea is taken from \cite{Harboure-deRosa-Segovia-Torrea}. By the $n$-dimensional Mehler's formula (see \cite[p.~6]{Thangavelu}),
\begin{equation}\label{M_r}
M_r(x,z):=\sum_{j=0}^\infty r^j\sum_{\abs{\nu}=j}h_\nu(x)h_\nu(z)=\frac{1}{\pi^{n/2}(1-r^2)^{n/2}}~e^{-\frac{1}{4}\left[\frac{1-r}{1+r}\abs{x+z}^2+\frac{1+r}{1-r}\abs{x-z}^2\right]},\qquad r\in(0,1).
\end{equation}
Then, for all $r\in(0,1/3)$,
$$\abs{\frac{d^k}{dr^k}~M_r(x,z)}\leq C\left(1+\abs{x+z}^2+\abs{x-z}^2\right)^ke^{-\frac{1}{4}\left[\frac{1-r}{1+r}\abs{x+z}^2+\frac{1+r}{1-r}\abs{x-z}^2\right]}\leq Ce^{-\frac{\abs{x}\abs{x-z}}{C}}e^{-\frac{\abs{x-z}^2}{C}},$$
where in the second inequality we applied Lemma \ref{Lem:exp est} of Section \ref{Section:Tecnica}, with $s=\frac{1-r}{1+r}$. Thus, by Taylor's formula,
$$\abs{\sum_{j=k}^\infty r^j\sum_{\abs{\nu}=j}h_\nu(x)h_\nu(z)}\leq Cr^ke^{-\frac{\abs{x}\abs{x-z}}{C}}e^{-\frac{\abs{x-z}^2}{C}},\qquad r\in(0,1/3).$$
Therefore, letting $r=\frac{1-s}{1+s}$ above, we obtain
\begin{align}
  \label{dif1} \abs{G_{t(s)}(x,z)-\phi_{2k}(x,z,s)} &= \abs{\sum_{j=k}^\infty\left(\frac{1-s}{1+s}\right)^{j+n/2}\sum_{\abs{\nu}=j}h_\nu(x)h_\nu(z)} \\
  \label{dif2} &\leq C\left(\frac{1-s}{1+s}\right)^{k+n/2}e^{-\frac{\abs{x}\abs{x-z}}{C}}e^{-\frac{\abs{x-z}^2}{C}},\qquad\hbox{ for all }s\in(1/2,1).
\end{align}

If $u\in C^{k,\alpha}_H$, then we have
$$\int_{\Real^n}A_{-i_1}\cdots A_{-i_k}u(x)h_\nu(x)~dx=0,\qquad1\leq i_1,\ldots,i_k\leq n,~\abs{\nu}<k.$$

\begin{thm}\label{Thm:H-2k}
Let $0<\alpha\leq1$ and $0<\sigma<1$. Assume that $0<\alpha-2\sigma<1$ and take $u\in C^{k,\alpha}_H$. If $v(x)=(A_{-i_1}\cdots A_{-i_k}u)(x)$, $1\leq i_1,\ldots,i_k\leq n$, then $A_{-i_1}\cdots A_{-i_k}H^\sigma u\in\mathcal{S}'\cap C(\Real^n)$, and, for all $x\in\Real^n$,
\begin{equation}\label{H-2k def}
A_{-i_1}\cdots A_{-i_k}H^\sigma u(x)=(H-2k)^\sigma v(x)=\int_{\Real^n}(v(x)-v(z))F_{-2k,\sigma}(x,z)~dz+v(x)B_{-2k,\sigma}(x),
\end{equation}
where
$$F_{-2k,\sigma}(x,z)=\frac{1}{-\Gamma(-\sigma)}\int_0^1\left(\frac{1+s}{1-s}\right)^k\left[G_{t(s)}(x,z)-\phi_{2k}(x,z,s)\right]~d\mu_\sigma(s),$$
and
$$B_{-2k,\sigma}(x)=\frac{1}{\Gamma(-\sigma)}\int_0^1\left[\left(\frac{1+s}{1-s}\right)^k\int_{\Real^n}\left[G_{t(s)}(x,z)-\phi_{2k}(x,z,s)\right]~dz-1\right]~d\mu_\sigma(s).$$
The integral in \eqref{H-2k def} is absolutely convergent.
\end{thm}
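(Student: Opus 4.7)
The plan is to first establish the pointwise identity on Schwartz functions in $\mathcal{S}_k$ via the intertwining relations of Remark \ref{Rem:H y Ai}, and then extend it to $u\in C^{k,\alpha}_H$ by approximation. The orthogonality of $v=A_{-i_1}\cdots A_{-i_k}u$ to the first $k$ eigenspaces of $H$ (noted just before the theorem) is the structural feature that lets us replace $G_{t(s)}$ by $G_{t(s)}-\phi_{2k}$, absorbing the divergent factor $\bigl(\tfrac{1+s}{1-s}\bigr)^k$. Three algebraic facts drive the argument: first, $v\in C^{0,\alpha}_H$ follows directly from Definition \ref{ckalfa}; second, iterating the identity $A_{-i}H^b=(H-2)^bA_{-i}$ of Remark \ref{Rem:H y Ai} gives, for every $g\in\mathcal{S}$,
$$A_{-i_1}\cdots A_{-i_k}H^\sigma g=(H-2k)^\sigma A_{-i_1}\cdots A_{-i_k}g;$$
third, since $A_{i_1}\cdots A_{i_k}h_\nu=0$ whenever $\abs{\nu}<k$, duality yields $A_{-i_1}\cdots A_{-i_k}g\in\mathcal{S}_k$ for every $g\in\mathcal{S}$. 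For any $w\in\mathcal{S}_k$, the orthogonality gives $\int\phi_{2k}(x,z,s)w(z)\,dz=0$, hence $e^{-t(s)(H-2k)}w(x)=\int_{\Real^n}\bigl(\tfrac{1+s}{1-s}\bigr)^k\bigl[G_{t(s)}(x,z)-\phi_{2k}(x,z,s)\bigr]w(z)\,dz$; substituting this into $(H-2k)^\sigma w=\Gamma(-\sigma)^{-1}\int_0^1(e^{-t(s)(H-2k)}w-w)\,d\mu_\sigma(s)$, splitting off the diagonal contribution $w(x)$ exactly as in the derivation of \eqref{H sig f point}, and applying Fubini produces the stated pointwise formula for $(H-2k)^\sigma w$.

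To justify Fubini and the absolute convergence asserted in \eqref{H-2k def}, I would establish a bound on $F_{-2k,\sigma}$ of the same form as Lemma \ref{Lem:F sig est}. On $s\in(1/2,1)$, estimate \eqref{dif2} yields $\bigl(\tfrac{1+s}{1-s}\bigr)^k\abs{G_{t(s)}-\phi_{2k}}\leq C\bigl(\tfrac{1-s}{1+s}\bigr)^{n/2}e^{-\abs{x}\abs{x-z}/C}e^{-\abs{x-z}^2/C}$, which coincides with the bound used on $G_{t(s)}$ in Lemma \ref{Lem:F sig est}; on $s\in(0,1/2)$, $\phi_{2k}\equiv 0$ and $\bigl(\tfrac{1+s}{1-s}\bigr)^k\leq 3^k$, so the Gaussian bound on $G_{t(s)}$ transfers directly. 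Combining the two ranges yields $\abs{F_{-2k,\sigma}(x,z)}\leq C\abs{x-z}^{-(n+2\sigma)}e^{-\abs{x}\abs{x-z}/C}e^{-\abs{x-z}^2/C}$, and an analogous computation shows that $B_{-2k,\sigma}$ is continuous of polynomial growth. For the extension to $u\in C^{k,\alpha}_H$, set $u_j(x):=\zeta(x/j)(u\ast W_{1/j})(x)$ as in the proof of Proposition \ref{Prop:tecnico}. Then $u_j\in\mathcal{S}$, $u_j\to u$ locally uniformly and in $L^p$ for suitable $p$, and $v_j:=A_{-i_1}\cdots A_{-i_k}u_j\in\mathcal{S}_k$ with $v_j\to v$ locally uniformly and $[v_j]_{C^{0,\alpha}}+[v_j]_{M^\alpha}\leq C\norm{u}_{C^{k,\alpha}_H}$. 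Combining the commutation with the pointwise formula just derived gives
$$A_{-i_1}\cdots A_{-i_k}H^\sigma u_j(x)=\int_{\Real^n}(v_j(x)-v_j(z))F_{-2k,\sigma}(x,z)\,dz+v_j(x)B_{-2k,\sigma}(x).$$
Rerunning the approximation argument from the proof of Proposition \ref{Prop:tecnico} with kernel $F_{-2k,\sigma}$ (which satisfies \eqref{K est} with $\gamma=2\sigma<1$) and sequence $v_j\to v$ shows that the right-hand side converges locally uniformly to the analogous expression with $v$; absolute convergence of this last integral follows from the kernel estimate combined with $\abs{v(x)-v(z)}\leq C\abs{x-z}^\alpha$ (and $\alpha>2\sigma$) near the diagonal and the exponential decay away from it. Meanwhile $u_j\to u$ in $\mathcal{S}'$ forces $A_{-i_1}\cdots A_{-i_k}H^\sigma u_j\to A_{-i_1}\cdots A_{-i_k}H^\sigma u$ in $\mathcal{S}'$, and uniqueness of distributional limits identifies the two sides, yielding \eqref{H-2k def} and continuity.

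The principal difficulty is the kernel estimate above: the divergent factor $\bigl(\tfrac{1+s}{1-s}\bigr)^k$ near $s=1$ is precisely cancelled by the factor $\bigl(\tfrac{1-s}{1+s}\bigr)^{k+n/2}$ coming from \eqref{dif2}, and one has to verify that what survives is still sharp enough to reproduce Lemma \ref{Lem:F sig est}. Once this is secured, the remainder of the argument runs in parallel with the treatment of $H^\sigma$ in Theorem \ref{Thm:H sig C alfa} and of $(H+2k)^\sigma$ in Theorem \ref{Thm:H+2k}.
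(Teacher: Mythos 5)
Your proposal is correct and follows essentially the same route as the paper: establish the pointwise formula for Schwartz data $u$ (so that $v\in\mathcal{S}_k$ and the orthogonality to $\set{h_\nu:\abs{\nu}<k}$ absorbs the factor $\bigl(\tfrac{1+s}{1-s}\bigr)^k$ via $\phi_{2k}$), verify the kernel/function estimates as in Lemmas \ref{Lem:F sig est}--\ref{Lem:B sig est}, and then pass to general $u\in C^{k,\alpha}_H$ by running the approximation of Proposition \ref{Prop:tecnico} with $v_j=A_{-i_1}\cdots A_{-i_k}f_j$. The paper's proof is a terse sketch deferring to those lemmas and to the proof of Proposition \ref{Prop:tecnico}; you simply supply the details it omits, with no structural difference.
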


\begin{proof}
Even if we have good estimates for $F_{-2k,\sigma}$ and $B_{-2k,\sigma}$ (see Lemmas \ref{Lem:F sig est} and \ref{Lem:B sig est}), we can not apply directly Proposition \ref{Prop:tecnico} here because the test space for $(H-2k)^\sigma$ is not $\mathcal{S}$, but $\mathcal{S}_k$. Nevertheless, the same ideas will work. Indeed, using Lemmas \ref{Lem:F sig est} and \ref{Lem:B sig est}, it can be checked that the conclusion is valid when $u$ is a Schwartz's class function (and then $v\in\mathcal{S}_k$), and, for the general result, we can apply the approximation procedure given in the proof of Proposition \ref{Prop:tecnico}, noting that $(A_{-i_1}\cdots A_{-i_k}f_j)(x)$ can be used to approximate $v(x)$.
\end{proof}

\section*{The fractional integral: $H^{-\sigma}$}

For $f\in\mathcal{S}$, the fractional integral $H^{-\sigma}f$, $0<\sigma\leq1$, is given by
$$H^{-\sigma}f(x)=\frac{1}{\Gamma(\sigma)}\int_0^\infty e^{-tH}f(x)~\frac{dt}{t^{1-\sigma}}=\sum_{\nu}\frac{1}{(2\abs{\nu}+n)^\sigma}~\langle f,h_\nu\rangle h_\nu(x),$$
and $H^{-\sigma}$ is a continuous and symmetric operator in $\mathcal{S}$. Moreover, $H^{-\sigma}f=(H^\sigma)^{-1}f$, $f\in\mathcal{S}$. By writing down the expression of the heat-diffusion semigroup, and applying Fubini's Theorem above, we get
$$H^{-\sigma}f(x)=\int_{\Real^n}\left[\frac{1}{\Gamma(\sigma)}\int_0^\infty G_t(x,z)~\frac{dt}{t^{1-\sigma}}\right]f(z)~dz=\int_{\Real^n}F_{-\sigma}(x,z)f(z)~dz.$$
In \cite{Bongioanni-Torrea} it is shown that the definition of $H^{-\sigma}$ extends to $f\in L^p(\Real^n)$, $1\leq p\leq\infty$, via the previous integral formula.

Observe that, for $f\in \mathcal{S}$, we have
$$H^{-\sigma}f(x)=\int_{\Real^n}(f(z)-f(x))F_{-\sigma}(x,z)~dz+f(x)H^{-\sigma}1(x),\qquad\hbox{for all }x\in\Real^n,$$
where
\begin{equation}\label{H-sig1}
H^{-\sigma}1(x)=\frac{1}{\Gamma(\sigma)}\int_0^\infty e^{-tH}1(x)~\frac{dt}{t^{1-\sigma}}=\int_{\Real^n}F_{-\sigma}(x,z)~dz.
\end{equation}
The next Theorem shows that the operator $H^{-\sigma}$ can be defined in $C^{0,\alpha}_H$ precisely by this formula.

\begin{thm}
For $u\in C^{0,\alpha}_H$, $0<\alpha\leq1$, and $0<\sigma\leq1$, $H^{-\sigma}u\in\mathcal{S}'\cap C(\Real^n)$, and
$$H^{-\sigma}u(x)=\int_{\Real^n}(u(z)-u(x))F_{-\sigma}(x,z)~dz+u(x)H^{-\sigma}1(x),\qquad x\in\Real^n.$$
\end{thm}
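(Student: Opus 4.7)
The plan is to adapt the scheme of Proposition~\ref{Prop:tecnico}, in the form of Remark~\ref{Rem:K L1}, taking $T=H^{-\sigma}$, $K(x,z)=F_{-\sigma}(x,z)$, and $B(x)=H^{-\sigma}1(x)$. First I would establish size estimates on $F_{-\sigma}$. Via the Meda parametrization \eqref{Meda transf},
\[
F_{-\sigma}(x,z)=\frac{1}{\Gamma(\sigma)}\int_0^1\left(\frac{1-s^2}{4\pi s}\right)^{n/2}e^{-\frac{1}{4}\left[s\abs{x+z}^2+\frac{1}{s}\abs{x-z}^2\right]}\,d\mu_{-\sigma}(s),
\]
with $d\mu_{-\sigma}(s)\sim s^{\sigma-1}\,ds$ near $s=0$ and integrable behavior near $s=1$. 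Splitting the $s$-integral at $s\sim\abs{x-z}^2$ yields the pointwise bound $\abs{F_{-\sigma}(x,z)}\leq\Phi(x-z)$, where $\Phi(h)\leq C\abs{h}^{2\sigma-n}e^{-\abs{h}^2/C}$ if $2\sigma<n$, and $\Phi$ is bounded with Gaussian decay when $2\sigma\geq n$. In either case $\Phi\in L^1\cap L^{p'}(\Real^n)$ for a suitable range of $p'$. Moreover $H^{-\sigma}1(x)=\int F_{-\sigma}(x,z)\,dz$ is a continuous bounded function of $x$: the smallest Hermite eigenvalue of $H$ is $n$, so $e^{-tH}1(x)\leq Ce^{-nt}$, and together with the integrability of $t^{\sigma-1}$ at $0$ this makes the integral \eqref{H-sig1} uniformly convergent.

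Next I would check that the right-hand side defines a continuous function of at most polynomial growth, hence a tempered distribution. Near the diagonal, $\abs{u(x)-u(z)}\leq[u]_{C^{0,\alpha}}\abs{x-z}^\alpha$ combined with $\abs{F_{-\sigma}(x,z)}\leq C\abs{x-z}^{2\sigma-n}$ furnishes the integrable majorant $\abs{x-z}^{2\sigma+\alpha-n}$; away from the diagonal $u$ is bounded (by $[u]_{M^\alpha}$) and $F_{-\sigma}(x,\cdot)\in L^1$ by the Gaussian decay, so dominated convergence yields continuity of the integral in $x$. The multiplicative term $u\cdot H^{-\sigma}1$ is plainly continuous, hence the RHS is a continuous function belonging to $\mathcal{S}'$.

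Finally, to identify $H^{-\sigma}u\in\mathcal{S}'$ with this function I would mimic the approximation from the proof of Proposition~\ref{Prop:tecnico}. Choose $p$ with $p>n/\alpha$ (and also $p>n/(2\sigma)$ when $2\sigma<n$), so that $u\in L^p(\Real^n)$ (from $[u]_{M^\alpha}<\infty$) and $\Phi\in L^{p'}(\Real^n)$. Set $f_j(x)=\zeta(x/j)(u\ast W_{1/j})(x)\in\mathcal{S}$; the usual estimates give $f_j\to u$ in $L^p$ and uniformly on compacts, with $\norm{f_j}_{C^{0,\alpha}_H}\leq C\norm{u}_{C^{0,\alpha}_H}$. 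For each $f_j\in\mathcal{S}$ the claimed representation is immediate by adding and subtracting $f_j(x)F_{-\sigma}(x,z)$ under the integral. Now $H^{-\sigma}f_j\to H^{-\sigma}u$ in $\mathcal{S}'$ by continuity and self-adjointness of $H^{-\sigma}$ on $\mathcal{S}$; on the RHS, splitting at a small ball $B_\delta(x)$ shows that the near-diagonal part is bounded uniformly in $j$ by $C[f_j]_{C^{0,\alpha}}\delta^{2\sigma+\alpha}$, while the complement converges uniformly on any $B_{R/2}(x_0)$ by H\"older's inequality (pairing $\Phi\in L^{p'}$ against $f_j\to u$ in $L^p$) together with the uniform convergence on compacts. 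Uniqueness of distributional limits identifies $H^{-\sigma}u$ with the RHS, which is continuous as a uniform-on-compacts limit of continuous functions. The only nontrivial step is the size estimate on $F_{-\sigma}$ providing the dominating $\Phi\in L^{p'}$; once that is in hand, the argument is essentially a transcription of the proof of Proposition~\ref{Prop:tecnico}.
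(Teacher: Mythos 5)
Your proposal is correct and takes essentially the same route as the paper, which applies Proposition~\ref{Prop:tecnico} (in the case $n>2\sigma$) or Remark~\ref{Rem:K L1} (in the case $n\leq 2\sigma$) with $\gamma=-2\sigma$, $\varepsilon=\alpha+2\sigma$, using the kernel bounds from Lemmas~\ref{Lem:F-sig est} and~\ref{Lem:H-sig 1}. The only imprecision is the claim that $\Phi$ is \emph{bounded} when $2\sigma\geq n$: in the borderline case $n=2\sigma$ (which occurs for $n=1,\ \sigma=1/2$ and $n=2,\ \sigma=1$), $F_{-\sigma}$ has a logarithmic singularity on the diagonal, as recorded in Lemma~\ref{Lem:F-sig est}, but this dominating function still belongs to $L^{p'}$ for every $p'<\infty$, so the approximation argument goes through unchanged.
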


\begin{proof}
In Lemmas \ref{Lem:F-sig est} and \ref{Lem:H-sig 1} we collect the properties of the kernel $F_{-\sigma}(x,z)$ and the function $H^{-\sigma}1(x)$. When $n>2\sigma$, an application of Proposition \ref{Prop:tecnico} with $\gamma=-2\sigma$ and $\varepsilon=\alpha+2\sigma$ gives the result. For the case $n\leq2\sigma$, we use Remark \ref{Rem:K L1}.
\end{proof}

We shall also need to work with the derivatives of $H^{-\sigma}u$. The following Theorem gives the pointwise formula that will be used along the paper.

\begin{thm}\label{Thm:Ai H-sig}
Take $0<\alpha\leq1$ and $0<\sigma\leq1$, such that $\alpha+2\sigma>1$. If $u\in C^{0,\alpha}_H$ then, for each $1\leq\abs{i}\leq n$, we have $A_iH^{-\sigma}u\in\mathcal{S}'\cap C(\Real^n)$, and
$$A_iH^{-\sigma}u(x)=\int_{\Real^n}(u(z)-u(x))A_iF_{-\sigma}(x,z)~dz+u(x)A_iH^{-\sigma}1(x),\qquad x\in\Real^n.$$
\end{thm}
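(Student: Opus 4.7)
The strategy is to adapt the proof of the preceding theorem (for $H^{-\sigma}u$) to the non-symmetric operator $T=A_iH^{-\sigma}$. Since $A_i$ is bounded on $\mathcal{S}$ with adjoint $A_{-i}$, and $H^{-\sigma}$ is bounded and symmetric on $\mathcal{S}$, the adjoint of $T$ on $\mathcal{S}$ is $T^\ast=H^{-\sigma}A_{-i}$ (equivalently, by Remark \ref{Rem:H y Ai}, $T^\ast=A_{-i}(H+2)^{-\sigma}$), which is again bounded on $\mathcal{S}$. Since $u\in C^{0,\alpha}_H$ has polynomial growth and hence lies in $\mathcal{S}'$, the distribution $Tu$ is well defined by
$$\langle A_iH^{-\sigma}u,\varphi\rangle:=\langle u,H^{-\sigma}A_{-i}\varphi\rangle,\qquad\varphi\in\mathcal{S},$$
and the task is to identify $Tu$ with the continuous function on the right-hand side of the statement.

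The first step is to obtain a pointwise estimate of the type \eqref{K est} for the differentiated kernel. Using the Meda parametrization \eqref{Meda transf}, $F_{-\sigma}(x,z)$ is an integral of $G_{t(s)}(x,z)$ against $d\mu_{-\sigma}(s)$, and a direct computation gives, for $1\leq i\leq n$,
$$A_i^{(x)}G_{t(s)}(x,z)=\frac{1-s}{2s}\bigl[(z_i-x_i)+s(x_i+z_i)\bigr]G_{t(s)}(x,z),$$
with an analogous identity for $A_{-i}^{(x)}$. The prefactor vanishes linearly as $s\to 1$, while near $s=0$ the bracket reduces to $z_i-x_i$, which combines with the Gaussian $e^{-\abs{x-z}^2/(4s)}$ to produce exactly one extra factor $\abs{x-z}^{-1}$ after integration. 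Together with Lemma \ref{Lem:exp est} and the behavior of $d\mu_{-\sigma}$ in \eqref{est dmu rho}, this yields
$$\abs{A_iF_{-\sigma}(x,z)}\leq\frac{C}{\abs{x-z}^{n+1-2\sigma}}\,e^{-\abs{x}\abs{x-z}/C}\,e^{-\abs{x-z}^2/C},$$
which is \eqref{K est} with $\gamma=1-2\sigma\in[-1,1)$. An analogous computation, using \eqref{H-sig1}, gives $\abs{A_iH^{-\sigma}1(x)}\leq C(1+\abs{x})^{1-2\sigma}$, i.e.\ polynomial growth. These two facts would be added as items in Lemmas \ref{Lem:F-sig est} and \ref{Lem:H-sig 1} of Section \ref{Section:Tecnica}.

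With $\gamma=1-2\sigma$ and $\varepsilon=\alpha+2\sigma-1>0$ (positive precisely by the hypothesis $\alpha+2\sigma>1$), one has $\gamma+\varepsilon=\alpha\in(0,1]$. The integral $\int_{\Real^n}(u(z)-u(x))A_iF_{-\sigma}(x,z)\,dz$ is then absolutely convergent because near $z=x$ the integrand is dominated by $[u]_{C^{0,\alpha}}\abs{x-z}^{\alpha-(n+1-2\sigma)}$, and combined with $u(x)A_iH^{-\sigma}1(x)$ it defines a continuous function of $x$. To identify this function with $Tu\in\mathcal{S}'$ the plan is to reuse verbatim the approximation argument from the proof of Proposition \ref{Prop:tecnico}: take $f_j(x)=\zeta(x/j)(u\ast W_{1/j})(x)\in\mathcal{S}$, with $f_j\to u$ in $L^p$ for some $p>n/\alpha$ and uniformly on balls, and $[f_j]_{C^{0,\alpha}}\leq C\norm{u}_{C^{0,\alpha}_H}$. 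Continuity of $T^\ast$ on $\mathcal{S}$ gives $Tf_j\to Tu$ in $\mathcal{S}'$; for each Schwartz $f_j$ one obtains the pointwise identity by differentiating under the integral in the representation of $H^{-\sigma}f_j$ from the preceding theorem; and the kernel estimate above allows passage to the limit in the right-hand side uniformly on every ball, exactly as in that proof.

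The main obstacle is the kernel estimate for $A_iF_{-\sigma}$. The naive bound, obtained by differentiating only the Gaussian exponent in $G_{t(s)}$, produces a prefactor of order $\tfrac{1}{s}\abs{x-z}$ which, multiplied by $s^{-n/2}e^{-\abs{x-z}^2/(4s)}$ and integrated against $d\mu_{-\sigma}(s)\sim s^{\sigma-1}\,ds$ near $s=0$, is critical; it is only the cancelation exposed by the identity above (the factor $1-s$ taming the behavior at $s\to 1$ and the bracket contributing $z_i-x_i$ as $s\to 0$) that delivers the correct homogeneity $\abs{x-z}^{-(n+1-2\sigma)}$. This bookkeeping is analogous to, but slightly more delicate than, the calculation of $\abs{F_\sigma}$ and $\abs{F_{-\sigma}}$ in Lemmas \ref{Lem:F sig est} and \ref{Lem:F-sig est}; once it is carried out, the remainder of the proof is a routine replication of the arguments already used in Propositions \ref{Prop:tecnico}--\ref{Prop:tecnico2} and in the preceding theorem.
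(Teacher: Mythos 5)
Your outline matches the paper's structure — establish the pointwise formula for $f\in\mathcal{S}$, then extend to $C^{0,\alpha}_H$ by the approximation scheme of Proposition \ref{Prop:tecnico} (or Remark \ref{Rem:K L1} in low dimensions) with $\gamma=1-2\sigma$ and $\varepsilon=\alpha+2\sigma-1$. Your observation that $T^\ast=H^{-\sigma}A_{-i}=A_{-i}(H+2)^{-\sigma}$ is bounded on $\mathcal{S}$, needed because $T=A_iH^{-\sigma}$ is not symmetric as required in Proposition \ref{Prop:tecnico}, is correct and worth making explicit, and your kernel estimate $\abs{A_iF_{-\sigma}(x,z)}\lesssim\abs{x-z}^{-(n+1-2\sigma)}e^{-\abs{x}\abs{x-z}/C}e^{-\abs{x-z}^2/C}$ is exactly \eqref{Ai F-sig est} of Lemma \ref{Lem:F-sig est}.

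However, there is a genuine gap in the Schwartz-function step. You write that for $f_j\in\mathcal{S}$ the identity
$$\partial_{x_i}\int_{\Real^n}(f(z)-f(x))F_{-\sigma}(x,z)\,dz=\int_{\Real^n}(f(z)-f(x))\partial_{x_i}F_{-\sigma}(x,z)\,dz-\partial_{x_i}f(x)\,H^{-\sigma}1(x)$$
follows by ``differentiating under the integral,'' but this is precisely the step that requires an argument. The integrand's formal $x$-derivative is only locally integrable near $z=x$ (with singularity $\abs{x-z}^{1-(n+1-2\sigma)}=\abs{x-z}^{-(n-2\sigma)}$), and because the singularity tracks the differentiation variable, the classical dominated-convergence criterion for differentiation under the integral does not apply directly. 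The paper resolves this by the cutoff device of \cite[Lemma~4.1]{Gilbarg-Trudinger}: introduce $h_\varepsilon(x)=\int(f(z)-f(x))F_{-\sigma}(x,z)\phi(\varepsilon^{-1}\abs{x-z})\,dz$, show $h_\varepsilon\to\int(f(z)-f(x))F_{-\sigma}(x,z)\,dz$ uniformly and $\partial_{x_i}h_\varepsilon\to g$ uniformly as $\varepsilon\to0$, and conclude. This is a necessary step of the proof and must be supplied, not invoked. Separately, your claim that the kernel estimate requires ``exposing a cancelation'' overstates the difficulty: differentiating $G_{t(s)}$ produces a factor of order $s^{-1}\abs{x-z}+s\abs{x+z}$, and both terms are absorbed into the Gaussian at the cost of a single $s^{-1/2}$, after which Lemma \ref{Lem:B-T} with $\eta=1/2$, $\rho=-\sigma$ gives the stated homogeneity with no cancelation — this is exactly how the paper proves \eqref{Ai F-sig est} via \eqref{nab F-sig}.
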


\begin{proof}
Let us first prove the result when $u=f\in\mathcal{S}$. It is enough to consider $1\leq i\leq n$. We have
$$A_iH^{-\sigma}f(x)=A_i\int_{\Real^n}(f(z)-f(x))F_{-\sigma}(x,z)~dz+\partial_{x_i}f(x)H^{-\sigma}1(x)+f(x)A_iH^{-\sigma}1(x).$$
We want to put the $A_i$ inside the integral. In order to do that, we apply a classical approximation argument given in the proof of Lemma 4.1 of \cite{Gilbarg-Trudinger}, that we sketch here. By estimate \eqref{Ai F-sig est}, Lemma \ref{Lem:H-sig 1} (see Section \ref{Section:Tecnica}), and the fact that $\alpha+2\sigma>1$, the function
$$g(x)=\int_{\Real^n}\partial_{x_i}\left[(f(z)-f(x))F_{-\sigma}(x,z)\right]~dz= \int_{\Real^n}(f(z)-f(x))\partial_{x_i}F_{-\sigma}(x,z)~dz-\partial_{x_i}f(x)H^{-\sigma}1(x),$$
is well defined. Fix a function $\phi\in C^1(\Real)$ satisfying $0\leq\phi\leq1$, $\phi(t)=0$ for $t\leq1$, $\phi(t)=1$ for $t\geq2$, and $0\leq\phi'\leq2$. Define, for $0<\varepsilon<1/2$,
$$h_\varepsilon(x)=\int_{\Real^n}(f(z)-f(x))F_{-\sigma}(x,z)\phi\left(\varepsilon^{-1}\abs{x-z}\right)~dz.$$
Then estimate \eqref{F-sig est} implies that, as $\varepsilon\to0$, $h_\varepsilon(x)$ converges uniformly in $\Real^n$ to
$$\int_{\Real^n}(f(z)-f(x))F_{-\sigma}(x,z)~dz.$$
Moreover, $h_\varepsilon\in C^1(\Real^n)$, and, again by \eqref{F-sig est} and \eqref{Ai F-sig est},
\begin{align*}
  \abs{g(x)-\partial_{x_i}h_\varepsilon(x)} &\leq \int_{\Real^n}\abs{\partial_{x_i}\left[(f(z)-f(x))F_{-\sigma}(x,z)\left(1-\phi\left(\varepsilon^{-1}\abs{x-z}\right)\right)\right]}~dz \\
   &\leq C_f\int_{\abs{x-z}<2\varepsilon}\left[F_{-\sigma}(x,z)+\abs{x-z}\abs{\nabla_xF_{-\sigma}(x,z)}+\abs{x-z}F_{-\sigma}(x,z)\frac{1}{\varepsilon}\right]~dz \\
   &\leq C_f\Phi_{n,\sigma}(\varepsilon),
\end{align*}
where $\Phi_{n,\sigma}(\varepsilon)\to0$, as $\varepsilon\to0$, uniformly in $x\in\Real^n$. Thus,
$$\partial_{x_i}\int_{\Real^n}(f(z)-f(x))F_{-\sigma}(x,z)~dz=\int_{\Real^n}(f(z)-f(x))\partial_{x_i}F_{-\sigma}(x,z)~dz-\partial_{x_i}f(x)H^{-\sigma}1(x),$$
and the Theorem is valid when $u$ is a Schwartz function. For the general case, $u\in C^{0,\alpha}_H$, we argue as follows. If $n>2\sigma-1$, then, by \eqref{Ai F-sig est}, we can apply Proposition \ref{Prop:tecnico} with $\gamma=1-2\sigma$ and $\varepsilon=\alpha+2\sigma-1$, and, if $n=2\sigma-1$, we can use Remark \ref{Rem:K L1}.
\end{proof}

\section*{The Hermite-Riesz transforms: $\mathcal{R}_i$ and $\mathcal{R}_{ij}$}

The first order Hermite-Riesz transforms are given by
$$\mathcal{R}_i=A_iH^{-1/2},\qquad1\leq\abs{i}\leq n.$$
These operators where first introduced and studied by Thangavelu \cite{Thangavelu}. The second order Hermite-Riesz transforms are (see \cite{Harboure-deRosa-Segovia-Torrea}, \cite{Stempak-Torrea Acta})
$$\mathcal{R}_{ij}=A_iA_jH^{-1},\qquad1\leq\abs{i},\abs{j}\leq n.$$
Using Hermite series expansions it is easy to check that the first and second order Hermite-Riesz transforms are symmetric operators in $\mathcal{S}$ and that they map $\mathcal{S}$ into $\mathcal{S}$ continuously.

Taking $\sigma=1/2$ in Theorem \ref{Thm:Ai H-sig} we get:

\begin{thm}\label{Thm:R1 point}
If $u\in C^{0,\alpha}_H$, $0<\alpha\leq1$, then, for all $1\leq\abs{i}\leq n$, we have $\mathcal{R}_iu\in\mathcal{S}'\cap C(\Real^n)$, and
$$\mathcal{R}_iu(x)=\int_{\Real^n}(u(z)-u(x))A_iF_{-1/2}(x,z)~dz+u(x)A_iH^{-1/2}1(x),\qquad x\in\Real^n.$$
\end{thm}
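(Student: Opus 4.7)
The plan is to recognize that Theorem \ref{Thm:R1 point} is essentially an immediate specialization of Theorem \ref{Thm:Ai H-sig} with $\sigma = 1/2$, as already signaled by the sentence preceding the statement. First I would verify that the hypotheses of Theorem \ref{Thm:Ai H-sig} are met: we need $0 < \alpha \leq 1$ and $\alpha + 2\sigma > 1$. With $\sigma = 1/2$ the latter condition reads $\alpha + 1 > 1$, which holds automatically since $\alpha > 0$. Thus there is no hidden restriction introduced by the specialization.

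Second, I would invoke the definition $\mathcal{R}_i = A_i H^{-1/2}$ (recall $1 \leq |i| \leq n$) and substitute $\sigma = 1/2$ into the conclusion of Theorem \ref{Thm:Ai H-sig}. This yields $\mathcal{R}_i u \in \mathcal{S}' \cap C(\mathbb{R}^n)$ together with the pointwise identity
\[
\mathcal{R}_i u(x) = A_i H^{-1/2} u(x) = \int_{\Real^n}(u(z)-u(x)) A_i F_{-1/2}(x,z) \, dz + u(x) A_i H^{-1/2}1(x),
\]
which is exactly the stated formula. Since this is a literal instantiation of a theorem already proved, there is no genuine obstacle to address; the only bookkeeping point is that the earlier proof splits into the sub-cases $n > 2\sigma - 1$ (handled via Proposition \ref{Prop:tecnico}) and $n = 2\sigma - 1$ (handled via Remark \ref{Rem:K L1}), and both are automatically covered here because $n \geq 1$ and $2\sigma - 1 = 0$.
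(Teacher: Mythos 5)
Your proposal is correct and matches the paper's approach exactly: the paper itself introduces Theorem \ref{Thm:R1 point} with the single line ``Taking $\sigma=1/2$ in Theorem \ref{Thm:Ai H-sig} we get,'' which is precisely the specialization you carry out. One tiny slip in your bookkeeping remark: since $2\sigma-1=0$ and $n\geq1$, only the case $n>2\sigma-1$ of Theorem \ref{Thm:Ai H-sig} ever occurs here (the case $n=2\sigma-1$ is vacuous), but this does not affect the conclusion.
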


By using the properties of the kernel of the second order Riesz transform $R_{ij}(x,z)=A_iA_jF_{-1}(x,z)$ (see Lemma \ref{Lem:Rij est} in Section \ref{Section:Tecnica}), it is easy to get a pointwise description of $\mathcal{R}_{ij}f$, $f\in\mathcal{S}$. Hence, we can use Proposition \ref{Prop:tecnico}, with $\gamma=0$ and $\varepsilon=\alpha$, to have the following Theorem.

\begin{thm}
If $u\in C^{0,\alpha}_H$, $0<\alpha\leq1$, then, for all $1\leq\abs{i},\abs{j}\leq n$, we have $\mathcal{R}_{ij}u\in\mathcal{S}'\cap C(\Real^n)$, and
$$\mathcal{R}_{ij}u(x)=\int_{\Real^n}(u(z)-u(x))R_{ij}(x,z)~dz+u(x)A_iA_jH^{-1}1(x),\qquad x\in\Real^n.$$
\end{thm}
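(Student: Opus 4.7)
The plan is to invoke Proposition \ref{Prop:tecnico} with $T=\mathcal{R}_{ij}$, kernel $K(x,z)=R_{ij}(x,z)=A_iA_jF_{-1}(x,z)$, function $B(x)=A_iA_jH^{-1}1(x)$, exponent $\gamma=0$, and $\varepsilon=\alpha$. The hypotheses on $T$ itself are already recorded immediately before the statement: via Hermite series expansions one checks that $\mathcal{R}_{ij}$ sends $\mathcal{S}$ into $\mathcal{S}$ continuously and satisfies $\langle\mathcal{R}_{ij}f,g\rangle=\langle f,\mathcal{R}_{ij}g\rangle$ for $f,g\in\mathcal{S}$.

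The first substantive step is to establish the pointwise formula for $f\in\mathcal{S}$, namely
$$A_iA_jH^{-1}f(x)=\int_{\Real^n}(f(z)-f(x))R_{ij}(x,z)~dz+f(x)A_iA_jH^{-1}1(x).$$
Starting from $H^{-1}f(x)=\int_{\Real^n}F_{-1}(x,z)f(z)~dz$, I would iterate twice the differentiation-under-the-integral argument from the proof of Theorem \ref{Thm:Ai H-sig}: introduce the cutoff $\phi(\varepsilon^{-1}\abs{x-z})$, verify uniform convergence using the kernel bounds in Lemma \ref{Lem:Rij est} (plus the corresponding estimates for $A_jF_{-1}$ from the first stage), and take $\varepsilon\to 0$. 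The subtraction of $f(x)$ under the integral is forced by the $\abs{x-z}^{-n}$ singularity of $R_{ij}$ and naturally produces the boundary term $f(x)A_iA_jH^{-1}1(x)$, exactly as the derivative of $H^{-1}1$ appeared in Theorem \ref{Thm:Ai H-sig}.

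Next I check the hypotheses of Proposition \ref{Prop:tecnico} on $K$ and $B$. Lemma \ref{Lem:Rij est} supplies precisely the estimate \eqref{K est} with $\gamma=0$,
$$\abs{R_{ij}(x,z)}\leq\frac{C}{\abs{x-z}^{n}}~e^{-\frac{\abs{x}\abs{x-z}}{C}}e^{-\frac{\abs{x-z}^2}{C}}.$$
The function $B(x)=A_iA_jH^{-1}1(x)$ is continuous with polynomial growth, which follows from \eqref{H-sig1} (compare Lemma \ref{Lem:H-sig 1}), together with the fact that $A_i$ introduces at worst a factor comparable to $1+\abs{x}$ and that two differentiations of the Mehler representation still leave a Gaussian factor in $x$. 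Since $0<\gamma+\varepsilon=\alpha\leq 1$ and $u\in C^{0,\alpha}_H=C^{0,\gamma+\varepsilon}_H$, Proposition \ref{Prop:tecnico} directly yields that $\mathcal{R}_{ij}u$, a priori defined as the tempered distribution $A_iA_jH^{-1}u$, coincides with the continuous function given by the displayed formula.

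The main obstacle is the auxiliary kernel analysis rather than the structural argument: one needs the sharp second-order bound in Lemma \ref{Lem:Rij est} (retaining the full exponential weight $e^{-\abs{x}\abs{x-z}/C}$ after differentiating twice) and the justification that $A_i$ and $A_j$ can be commuted past the $t$-integral defining $H^{-1}$ on Schwartz functions. Once these are in hand, applying Proposition \ref{Prop:tecnico} is essentially mechanical.
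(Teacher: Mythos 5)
Your proposal matches the paper's argument: the paper establishes the pointwise formula for Schwartz functions via the kernel bounds of Lemma~\ref{Lem:Rij est} and then invokes Proposition~\ref{Prop:tecnico} with $\gamma=0$ and $\varepsilon=\alpha$, which is exactly your plan. The only place you supply more detail than the paper is in sketching how to justify differentiating under the integral twice (iterating the cutoff argument from Theorem~\ref{Thm:Ai H-sig}), but this is just filling in a step the paper dismisses as ``easy,'' not a different route.
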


\section{Proofs of the main results}\label{Section:Proofs}

\subsection{Regularity properties of the Hermite-Riesz transforms}

As we already said in the Introduction, a study of the action of the Hermite-Riesz transforms in the H\"older spaces $C^{k,\alpha}_H$ is needed.

\begin{thm}\label{Thm:R y Calfa}
The Hermite-Riesz transforms $\mathcal{R}_i$ and $\mathcal{R}_{ij}$, $1\leq\abs{i},\abs{j}\leq n$, are bounded operators on the spaces $C^{0,\alpha}_H$: if $u\in C^{0,\alpha}_H$, for some $0<\alpha<1$, then $\mathcal{R}_iu,\mathcal{R}_{ij}u\in C^{0,\alpha}_H$, and
$$\norm{\mathcal{R}_iu}_{C^{0,\alpha}_H}+\norm{\mathcal{R}_{ij}u}_{C^{0,\alpha}_H}\leq C\norm{u}_{C^{0,\alpha}_H}.$$
\end{thm}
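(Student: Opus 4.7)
The plan is to apply Proposition \ref{Prop:tecnico2} to each Hermite-Riesz transform with $\gamma=0$ and $\varepsilon=\alpha$, so that the output space $C^{0,\varepsilon}_H$ coincides with the input space $C^{0,\gamma+\varepsilon}_H = C^{0,\alpha}_H$. Note the bounds $0 \leq \gamma < 1$, $0 < \varepsilon < 1$, $0 < \gamma+\varepsilon \leq 1$ required by the Proposition all hold since $0<\alpha<1$.

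For $\mathcal{R}_i$, I would start from the pointwise representation of Theorem \ref{Thm:R1 point} and rewrite it in the standard form
$$\mathcal{R}_iu(x)=\int_{\Real^n}(u(x)-u(z))K_i(x,z)~dz+u(x)B_i(x),$$
with $K_i(x,z):=-A_iF_{-1/2}(x,z)$ and $B_i(x):=A_iH^{-1/2}1(x)$. For $\mathcal{R}_{ij}$ I would analogously take $K_{ij}(x,z):=-R_{ij}(x,z)=-A_iA_jF_{-1}(x,z)$ and $B_{ij}(x):=A_iA_jH^{-1}1(x)$. The operators satisfy the symmetry and $\mathcal{S}$-boundedness properties required by Proposition \ref{Prop:tecnico}, so the entire job reduces to verifying, in each case, the four conditions: the size estimate \eqref{K est} with $\gamma=0$, the smoothness estimate \textit{(a)}, the cancelation \textit{(b)}, and the boundedness conditions \textit{(c)} on $B$ and $\nabla B$.

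The size and smoothness estimates for $K_i$ and $K_{ij}$ would be extracted from the Meda parameter representation
$$F_{-\sigma}(x,z)=\frac{1}{\Gamma(\sigma)}\int_0^1\left(\frac{1-s^2}{4\pi s}\right)^{n/2}e^{-\frac{1}{4}\left[s\abs{x+z}^2+\frac{1}{s}\abs{x-z}^2\right]}d\mu_{-\sigma}(s),$$
by applying $A_i$ (and $A_j$) under the integral. Each derivative produces a factor bounded by $C(s^{1/2}+\abs{x-z}/s^{1/2})$, and the standard estimate of Lemma \ref{Lem:exp est} absorbs the polynomial factors into the Gaussian, yielding the $\abs{x-z}^{-n}$ (respectively $\abs{x-z}^{-n-1}$) behavior with the required exponential decay in $\abs{x}\abs{x-z}$ and $\abs{x-z}^2$. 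These are the estimates collected in Section \ref{Section:Tecnica} and I would invoke them directly (in particular Lemma \ref{Lem:F-sig est} and the second-order analogue Lemma \ref{Lem:Rij est}). The bounds on $B_i$, $B_{ij}$ and their gradients are precisely what Lemma \ref{Lem:H-sig 1} (and iterated variants) record.

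The step I expect to be the main obstacle is the cancelation condition \textit{(b)}. Unlike the classical Euclidean Riesz transforms, where the cancelation is immediate from the oddness of the kernel, here the Hermite kernels are not antisymmetric, so the bound must be derived from structural identities. The strategy I would use is to exploit that
$$\int_{\Real^n}A_iF_{-1/2}(x,z)~dz=A_iH^{-1/2}1(x),\qquad \int_{\Real^n}A_iA_jF_{-1}(x,z)~dz=A_iA_jH^{-1}1(x),$$
as justified by the approximation/differentiation-under-the-integral argument already carried out in Theorem \ref{Thm:Ai H-sig}, and which is uniformly bounded in $x$ by Lemma \ref{Lem:H-sig 1}. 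The missing complement $\int_{\abs{x-z}\leq r}K(x,z)\,dz$ is then estimated directly from the size bound on the kernel: the local contribution is at most $C\int_{\abs{x-z}\leq r}\abs{x-z}^{-n}e^{-\abs{x-z}^2/C}dz$, which is uniformly bounded (not growing as $r\to 0$, cf.\ the logarithmic case we'd need to avoid; here the exponential factor tames it and a standard dyadic splitting yields the required $Cr^{-\gamma}=C$ bound). Combining all four verifications yields continuity $C^{0,\alpha}_H\to C^{0,\alpha}_H$ for both $\mathcal{R}_i$ and $\mathcal{R}_{ij}$.
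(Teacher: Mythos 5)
There is a genuine gap in your proposed verification of the cancelation condition (b) of Proposition \ref{Prop:tecnico2}, and it affects both $\mathcal{R}_i$ and $\mathcal{R}_{ij}$. You propose to deduce (b) by writing $\int_{\abs{x-z}>r}K(x,z)\,dz$ as $B(x)$ minus the local complement $\int_{\abs{x-z}\leq r}K(x,z)\,dz$, and then to bound the latter \emph{in absolute value} via the size estimate. But when $\gamma=0$ the kernel satisfies only $\abs{K(x,z)}\lesssim\abs{x-z}^{-n}$, so $\int_{\abs{x-z}\leq r}\abs{K(x,z)}\,dz$ is $\log$-divergent; the Gaussian factor $e^{-\abs{x-z}^2/C}$ is of order $1$ for $\abs{x-z}\leq r<1$ and provides no help there. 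Thus the "direct estimate from the size bound" fails exactly at the critical case $\gamma=0$, which is the case you need. Cancelation (b) must come from structural oscillation of the kernel, not size; this is precisely the content of Lemma \ref{Lem:Torrea}, whose proof splits $\partial_{x_i}F_{-1/2}$ into a piece $I$ that is absolutely integrable and a piece $II$ for which an explicitly odd modification $\widetilde{II}$ (odd in $z$ about the center $x$) integrates to zero over annuli, with $\abs{II-\widetilde{II}}$ integrable. Your sketch acknowledges cancelation is the obstacle but then substitutes a size argument that does not close it.

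The second gap is that, for the second-order transforms, no cancelation lemma for $R_{ij}(x,z)=A_iA_jF_{-1}(x,z)$ (nor a bound on $A_iA_jH^{-1}1$ and its gradient) is established anywhere in the paper, so applying Proposition \ref{Prop:tecnico2} to $\mathcal{R}_{ij}$ \emph{directly} is not supported by the stated lemmas. The paper avoids this: it first handles $\mathcal{R}_i$ using Lemma \ref{Lem:Torrea}, then factors $\mathcal{R}_{ij}=A_i(H+2)^{-1/2}\circ\mathcal{R}_j$ for $1\leq j\leq n$ (Remark \ref{Rem:H y Ai}) and checks that the kernel $A_iF_{2,-1/2}$ shares the properties of $A_iF_{-1/2}$, and finally treats the remaining indices $\mathcal{R}_{i,-j}$ via the algebraic decomposition $\mathcal{R}_{ij}=\partial^2_{x_i,x_j}H^{-1}+x_j\partial_{x_i}H^{-1}+x_i\partial_{x_j}H^{-1}+x_ix_jH^{-1}+\delta_{ij}H^{-1}$, exploiting Lemma \ref{Lem:Bruno} (which gives \emph{absolute} integrability of the kernels $x_i\partial_{x_j}F_{-1}$, $x_ix_jF_{-1}$, a stronger substitute for (b)) to isolate $\partial^2_{x_i,x_j}H^{-1}$ by subtraction. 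If you want to pursue the direct route, you would need to prove a second-order analogue of Lemma \ref{Lem:Torrea}, which is nontrivial and not what the paper does.
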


\begin{proof}
By Lemmas \ref{Lem:F-sig est}, \ref{Lem:H-sig 1}, and \ref{Lem:Torrea} of Section \ref{Section:Tecnica}, and Theorem \ref{Thm:R1 point}, the result for $\mathcal{R}_i$ can be deduced applying Proposition \ref{Prop:tecnico2}, with $\gamma=0$ and $\varepsilon=\alpha$.

Let us consider the operator $\mathcal{R}_{ij}$, for some $j\in\set{1,\ldots,n}$. Then, by Remark \ref{Rem:H y Ai}, we have
$$\mathcal{R}_{ij}=A_iA_jH^{-1}=A_i\left(A_jH^{-1/2}\right)H^{-1/2}=A_i\left[(H+2)^{-1/2}A_j\right]H^{-1/2}=A_i(H+2)^{-1/2}\circ\mathcal{R}_j.$$
Therefore, it is enough to prove that $A_i(H+2)^{-1/2}$ is a continuous operator on $C^{0,\alpha}_H$. When $f\in\mathcal{S}$, we can write
$$A_i(H+2)^{-1/2}f(x)=\int_{\Real^n}(f(z)-f(x))A_iF_{2,-1/2}(x,z)~dz+f(x)A_i(H+2)^{-1/2}1(x),$$
where
$$F_{2,-1/2}(x,z)=\frac{1}{\Gamma(1/2)}\int_0^1\left(\frac{1-s}{1+s}\right)G_{t(s)}(x,z)d\mu_{-1/2}(s),\quad(H+2)^{-1/2}1(x)=\int_{\Real^n}F_{2,-1/2}(x,z)dz.$$
Following the proof of Lemmas \ref{Lem:F-sig est} and \ref{Lem:H-sig 1} given in Section \ref{Section:Tecnica}, it can be checked that the kernel $A_iF_{2,-1/2}(x,z)$ and the function $(H+2)^{-1/2}1(x)$ share the same size and smoothness properties than the kernel $A_iF_{-1/2}(x,z)$ and the function $H^{-1/2}1(x)$ stated in the mentioned Lemmas (details are left to the reader). Thus, as a consequence of the results of Section \ref{Section:Abstracta}, $A_i(H+2)^{-1/2}:C^{0,\alpha}_H\to C^{0,\alpha}_H$ continuously. Therefore $\mathcal{R}_{ij}$ is a bounded operator on $C^{0,\alpha}_H$, when $j\in\set{1,\ldots,n}$.

Note that
$$\mathcal{R}_{ij}=\partial^2_{x_i,x_j}H^{-1}+x_j\partial_{x_i}H^{-1}+x_i\partial_{x_j}H^{-1}+x_ix_jH^{-1}+\delta_{ij}H^{-1},$$
which, at the level of kernels, means that
$$R_{ij}(x,z)=\partial^2_{x_i,x_j}F_{-1}(x,z)+x_j\partial_{x_i}F_{-1}(x,z)+x_i\partial_{x_j}F_{-1}(x,z)+x_ix_jF_{-1}(x,z)+\delta_{ij}F_{-1}(x,z),$$
By the estimates given in Lemmas \ref{Lem:F-sig est}, \ref{Lem:Rij est} and \ref{Lem:Bruno} of Section \ref{Section:Tecnica}, we can apply the statements of Section \ref{Section:Abstracta} to show that the operators $x_i\partial_{x_j}H^{-1}$, $x_ix_jH^{-1}$ and $H^{-1}$ are bounded on $C^{0,\alpha}_H$. Hence, $\partial^2_{x_i,x_j}H^{-1}$ maps $C^{0,\alpha}_H$ into $C^{0,\alpha}_H$ continuously. Observe now that the operator $\mathcal{R}_{i,-j}$, for $j\in\set{1,\ldots,n}$, can be written as
$$\mathcal{R}_{i,-j}=-\partial^2_{x_i,x_j}H^{-1}+x_j\partial_{x_i}H^{-1}-x_i\partial_{x_j}H^{-1}+x_ix_jH^{-1}+\delta_{ij}H^{-1}.$$
The observations above give the conclusion for $\mathcal{R}_{i,-j}$, $j\in\set{1,\ldots,n}$.
\end{proof}

For technical reasons we have to consider the first order adjoint Hermite-Riesz transforms, that are defined by
$$\mathcal{R}_i^\ast f(x)=H^{-1/2}A_if(x)=\int_{\Real^n}F_{-1/2}(x,z)(A_if)(z)~dz,\qquad f\in\mathcal{S},~x\in\Real^n,~1\leq\abs{i}\leq n.$$

\begin{thm}\label{Thm:Ad R1 Calfa}
The operators $\mathcal{R}_i^\ast$, $1\leq\abs{i}\leq n$, are bounded operators on $C^{0,\alpha}_H$, $0<\alpha<1$.
\end{thm}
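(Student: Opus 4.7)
The plan is to mimic the proof of Theorem~\ref{Thm:R y Calfa} for $\mathcal{R}_i$: exhibit $\mathcal{R}_i^\ast u$ as a pointwise kernel operator on $C^{0,\alpha}_H$ and verify the hypotheses of Propositions~\ref{Prop:tecnico} and \ref{Prop:tecnico2} with $\gamma=0$, $\varepsilon=\alpha$. For $f\in\mathcal{S}$ and $1\leq i\leq n$ (the case of negative index is symmetric), integration by parts gives
$$\mathcal{R}_i^\ast f(x)=\int_{\Real^n}F_{-1/2}(x,z)\,(A_if)(z)\,dz=\int_{\Real^n}K_i^\ast(x,z)\,f(z)\,dz,\qquad K_i^\ast(x,z):=A_{-i,z}F_{-1/2}(x,z),$$
and subtracting $u(x)$ inside the integral yields the candidate representation
$$\mathcal{R}_i^\ast u(x)=\int_{\Real^n}(u(z)-u(x))\,K_i^\ast(x,z)\,dz+u(x)\,\mathcal{R}_i^\ast 1(x),\qquad \mathcal{R}_i^\ast 1(x):=\int_{\Real^n}K_i^\ast(x,z)\,dz,$$
the last integral being well-defined thanks to the Gaussian decay of $F_{-1/2}$ in $z$.

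I would then compute $K_i^\ast$ explicitly from the $s$-representation \eqref{F sig s}. Applying $A_{-i,z}=-\partial_{z_i}+z_i$ to the Gaussian gives
$$K_i^\ast(x,z)=\frac{1}{\Gamma(1/2)}\int_0^1\frac{(1+s)\bigl[(1+s)z_i-(1-s)x_i\bigr]}{2s}\left(\frac{1-s^2}{4\pi s}\right)^{n/2}e^{-\frac{1}{4}\bigl[s\abs{x+z}^2+\frac{1}{s}\abs{x-z}^2\bigr]}\,d\mu_{-1/2}(s),$$
to be compared with the kernel $A_{i,x}F_{-1/2}(x,z)$ of $\mathcal{R}_i$ from Lemma~\ref{Lem:Torrea}, whose prefactor $\frac{1-s}{2s}$ is here replaced by $\frac{1+s}{2s}$. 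Near $s=0$ both prefactors behave like $\frac{1}{2s}$ and produce the same Calder\'on--Zygmund singularity $\abs{x-z}^{-n}$; near $s=1$ the identity $(1+s)z_i-(1-s)x_i=2sx_i-(1+s)(x_i-z_i)$ splits the numerator into a piece bounded by $\abs{x}$ and a piece bounded by $\abs{x-z}$, both of which are absorbed by the Gaussian $e^{-\frac{s}{4}\abs{x+z}^2-\frac{1}{4s}\abs{x-z}^2}$ via the techniques of Lemma~\ref{Lem:exp est}. A parallel Mehler computation identifies $\mathcal{R}_i^\ast 1(x)=x_i\,g(x)$ with a smooth scalar $g$ that decays like $\abs{x}^{-1}$ at infinity.

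Running the arguments of Lemmas~\ref{Lem:F-sig est}, \ref{Lem:H-sig 1} and \ref{Lem:Torrea} essentially line by line then yields the size estimate \eqref{K est} with $\gamma=0$, the smoothness hypothesis (a) and the cancellation hypothesis (b) of Proposition~\ref{Prop:tecnico2} for $K_i^\ast$, together with the bounds $\abs{\mathcal{R}_i^\ast 1(x)}\leq C$ and $\nabla(\mathcal{R}_i^\ast 1)\in L^\infty(\Real^n)$ required by (c). Since $\mathcal{R}_i^\ast$ is not symmetric on $\mathcal{S}$, Proposition~\ref{Prop:tecnico} must be read with the obvious modification $\langle Tu,g\rangle:=\langle u,T^\ast g\rangle$ with $T^\ast=\mathcal{R}_{-i}$ (itself bounded on $\mathcal{S}$), which is harmless since the approximation in its proof only uses $\langle f_j,T^\ast g\rangle\to\langle u,T^\ast g\rangle$. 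Once the pointwise formula is thus justified on $C^{0,\alpha}_H$, Proposition~\ref{Prop:tecnico2} applied with $\gamma=0$ and $\varepsilon=\alpha$ delivers the claimed boundedness.

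The main obstacle is the analysis of the multiplier $\mathcal{R}_i^\ast 1(x)$: unlike $H^{-1/2}1(x)$ treated in Lemma~\ref{Lem:H-sig 1}, which is generated by a purely Gaussian integral, $\mathcal{R}_i^\ast 1$ is formally $H^{-1/2}$ applied to the linear function $x_i$, and one has to verify through the explicit $s$-integration that the Gaussian damping dominates this linear growth, leaving $\mathcal{R}_i^\ast 1$ and $\nabla(\mathcal{R}_i^\ast 1)$ uniformly bounded on $\Real^n$, so that hypothesis (c) of Proposition~\ref{Prop:tecnico2} holds with $\gamma=0$.
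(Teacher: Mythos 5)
Your proposal is correct but takes a genuinely different route from the paper. The paper never exhibits $\mathcal{R}_i^\ast$ directly as a kernel operator. Instead it works purely algebraically: by Remark~\ref{Rem:H y Ai}, $\mathcal{R}_{-i}^\ast=H^{-1/2}A_{-i}=A_{-i}(H+2)^{-1/2}$, and the operator $A_{-i}(H+2)^{-1/2}$ was already shown bounded on $C^{0,\alpha}_H$ inside the proof of Theorem~\ref{Thm:R y Calfa}; combining this with the separately established boundedness of the multiplication operator $f\mapsto H^{-1/2}(x_if)$ (Lemmas~\ref{Lem:F-sig est}, \ref{Lem:H-sig 1}, \ref{Lem:Bruno} and Proposition~\ref{Prop:tecnico2}) yields boundedness of $H^{-1/2}\partial_{x_i}$, and thus of $\mathcal{R}_i^\ast=H^{-1/2}\partial_{x_i}+H^{-1/2}x_i$ by addition. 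You instead compute the kernel $K_i^\ast(x,z)=A_{-i,z}F_{-1/2}(x,z)$ explicitly from the Meda representation and verify the hypotheses of Propositions~\ref{Prop:tecnico}--\ref{Prop:tecnico2} directly for it, with $\gamma=0$, $\varepsilon=\alpha$. Both approaches work: the paper's trick is shorter and leans entirely on results already in hand, while yours is more self-contained and makes the CZ structure of $\mathcal{R}_i^\ast$ transparent, at the cost of redoing the kernel estimates of Lemmas~\ref{Lem:F-sig est} and \ref{Lem:Torrea} for the $z$-derivative of $F_{-1/2}$. Your explicit computation of $K_i^\ast$, the prefactor comparison ($\frac{1+s}{2s}$ vs.\ $\frac{1-s}{2s}$), the split $(1+s)z_i-(1-s)x_i=2sx_i-(1+s)(x_i-z_i)$, the identification $\mathcal{R}_i^\ast1(x)=H^{-1/2}(y_i)(x)=x_i\,g(x)$ with $g$ bounded and decaying like $\abs{x}^{-1}$, and the observation that $T^\ast=\mathcal{R}_{-i}$ for the tempered-distribution definition, are all correct. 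The only small imprecision is the phrase that $\int K_i^\ast(x,z)\,dz$ converges thanks to the Gaussian decay: the $-\partial_{z_i}F_{-1/2}$ part is not absolutely integrable near $z=x$, and the integral has to be read either as a principal value (justified by your cancellation estimate (b)) or, more simply, by integrating by parts back to $\int F_{-1/2}(x,z)\,z_i\,dz$, which is absolutely convergent; this does not affect the validity of the argument.
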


\begin{proof}
Observe that, if $1\leq i\leq n$, then, by Remark \ref{Rem:H y Ai}, $R^\ast_{-i}=H^{-1/2}A_{-i}=A_{-i}(H+2)^{-1/2}$. This operator already appeared in the proof of Theorem \ref{Thm:R y Calfa}, and there we showed that it is a bounded operator on $C^{0,\alpha}_H$.

On the other hand, $R_{-i}^\ast=-H^{-1/2}\partial_{x_i}+H^{-1/2}x_i$. But by Lemmas \ref{Lem:F-sig est}, \ref{Lem:H-sig 1}, and \ref{Lem:Bruno} of Section \ref{Section:Tecnica}, and Proposition \ref{Prop:tecnico2}, we can see that the operator $f\mapsto H^{-1/2}x_if$, initially defined on $\mathcal{S}$, maps $C^{0,\alpha}_H$ into itself continuously. Therefore, we obtain the same conclusion for the operator $f\mapsto R_{-i}^\ast f-H^{-1/2}x_if=-H^{-1/2}\partial_{x_i}f$. Consequently, $R^\ast_i=H^{-1/2}A_i=H^{-1/2}\partial_{x_i}+H^{-1/2}x_i$ is a bounded operator on $C^{0,\alpha}_H$.
\end{proof}

\subsection{Proof of Theorem A}

We start with \textit{(A1)}. By recalling the results in Lemmas \ref{Lem:F sig est} and \ref{Lem:B sig est}, if we put $\gamma=2\sigma<1$ and $\varepsilon=\alpha-2\sigma$ in Proposition \ref{Prop:tecnico2}, we get the conclusion.

Consider now \textit{(A2)}. Using Remark \ref{Rem:H y Ai} and Theorem \ref{Thm:H-2k}, we have
$$H^\sigma u\in C_H^{1,\alpha-2\sigma}\Leftrightarrow A_iH^\sigma u,\,A_{-i}H^\sigma u\in C_H^{0,\alpha-2\sigma}\Leftrightarrow (H+2)^\sigma A_iu,\, (H-2)^\sigma A_{-i}u\in C_H^{0,\alpha-2\sigma}.$$
By Theorem \ref{Thm:H+2k}, together with Lemmas \ref{Lem:F sig est} and \ref{Lem:B sig est}, we can apply Proposition \ref{Prop:tecnico2}, with $\gamma=2\sigma<1$ and $\varepsilon=\alpha-2\sigma$, in order to get $(H+2)^\sigma:C_H^{0,\alpha}\to C_H^{0,\alpha-2\sigma}$ continuously, and then $\norm{(H+2)^\sigma A_iu}_{C_H^{0,\alpha-2\sigma}}\leq C\norm{A_iu}_{C^{0,\alpha}_H}\leq C\norm{u}_{C^{1,\alpha}_H}$. Applying Theorem \ref{Thm:H-2k}, Lemmas \ref{Lem:F sig est} and \ref{Lem:B sig est}, and Proposition \ref{Prop:tecnico2}, we get $\norm{(H-2)^\sigma A_{-i}u}_{C^{0,\alpha-2\sigma}_H}\leq C\norm{u}_{C^{1,\alpha}_H}$. Thus, $\norm{H^\sigma u}_{C^{1,\alpha-2\sigma}_H}\leq C\norm{u}_{C^{1,\alpha}_H}$.

Let us prove \textit{(A3)}. We can write
$$H^\sigma=H^{\sigma-1/2}\circ H^{-1/2}\circ H=H^{\sigma-1/2}\circ\frac{1}{2}\sum_{i=1}^n\left(\mathcal{R}_{-i}^\ast A_{-i}+\mathcal{R}_i^\ast A_i\right),$$
where $\mathcal{R}^\ast_{\pm i}$ are the adjoint Hermite-Riesz transforms, that are bounded operators on $C_H^{0,\alpha}$ (Theorem \ref{Thm:Ad R1 Calfa}). Consequently,
$$\frac{1}{2}\sum_{i=1}^n\left(\mathcal{R}_{-i}^\ast A_{-i}u+\mathcal{R}_i^\ast A_iu\right)=:v\in C_H^{0,\alpha}.$$
Now we distinguish two cases. If $\sigma-1/2>0$, then $0<\alpha-2(\sigma-1/2)<1$ by hypothesis, so we can apply \textit{(A1)} to obtain that $H^{\sigma-1/2}v\in C_H^{0,\alpha-2\sigma+1}$, and $\norm{H^\sigma u}_{C_H^{0,\alpha-2\sigma+1}}\leq C\norm{u}_{C_H^{1,\alpha}}$. If $\sigma-1/2<0$, then $0<\alpha+2(-\sigma+1/2)<1$, and we will get $H^{-(-\sigma+1/2)}v\in C^{0,\alpha-2\sigma+1}_H$, and $\norm{H^\sigma u}_{C^{0,\alpha-2\sigma+1}_H}\leq C\norm{u}_{C^{1,\alpha}_H}$, as soon as we have proved Theorem B, \textit{(B1)}. If $\sigma=1/2$, the result just follows from the boundedness of the adjoint Hermite-Riesz transforms on $C^{0,\alpha}_H$, Theorem \ref{Thm:Ad R1 Calfa}.

By iteration of \textit{(A1)}, \textit{(A2)} and \textit{(A3)}, and using Remark \ref{Rem:H y Ai} and Theorems \ref{Thm:H+2k} and \ref{Thm:H-2k}, we can derive \textit{(A4)}. The rather cumbersome details are left to the interested reader.

\subsection{Proof of Theorem B}

To prove \textit{(B1)} note that, if $\alpha+2\sigma\leq1$ then $0<\sigma<1/2$. Let us write
$$H^{-\sigma}u(x_1)-H^{-\sigma}u(x_2)=\int_{\Real^n}[u(z)-u(x_1)][F_{-\sigma}(x_1,z)-F_{-\sigma}(x_2,z)]dz
+u(x_1)\left[H^{-\sigma}1(x_1)-H^{-\sigma}1(x_2)\right].$$
By Lemma \ref{Lem:H-sig 1}, the second term above is bounded by $C[u]_{M^\alpha}\abs{x_1-x_2}^{\alpha+2\sigma}$. We split the remaining integral on $B=B(x_1,2\abs{x_1-x_2})$ and on $B^c$. We use Lemma \ref{Lem:F-sig est} to get
$$\int_B\abs{u(z)-u(x_1)}F_{-\sigma}(x_1,z)~dz\leq C[u]_{C^{0,\alpha}}\int_B\frac{\abs{x_1-z}^\alpha}{\abs{x_1-z}^{n-2\sigma}}~dz=C[u]_{C^{0,\alpha}}\abs{x_1-x_2}^{\alpha+2\sigma}.$$
Let $B'=B(x_2,4\abs{x_1-x_2})$. Then, by the triangle inequality,
\begin{align*}
  \int_B\abs{u(z)-u(x_1)}F_{-\sigma}(x_2,z)~dz &\leq C[u]_{C^{0,\alpha}}\int_{B'}\frac{\abs{x_1-z}^\alpha}{\abs{x_2-z}^{n-2\sigma}}~dz \\
   &\leq C[u]_{C^{0,\alpha}}\left[\abs{x_1-x_2}^\alpha\int_{B'}\frac{1}{\abs{x_2-z}^{n-2\sigma}}~dz+\int_{B'}\abs{x_2-z}^{\alpha-n+2\sigma}~dz\right] \\
   &= C[u]_{C^{0,\alpha}}\abs{x_1-x_2}^{\alpha+2\sigma}.
\end{align*}
Denote by $\widetilde{B}$ the ball with center $x_2$ and radius $\abs{x_1-x_2}$. Note that, for $z\in\widetilde{B}^c$, $\abs{z-x_1}<2\abs{z-x_2}$. Then, we apply Lemma \ref{Lem:F-sig est} to get
\begin{align*}
  \int_{B^c}\abs{u(z)-u(x_1)}\abs{F_{-\sigma}(x_1,z)-F_{-\sigma}(x_2,z)}dz &\leq C[u]_{C^{0,\alpha}}\abs{x_1-x_2}\int_{\widetilde{B}^c}\frac{\abs{z-x_2}^\alpha}{\abs{z-x_2}^{n+1-2\sigma}}~e^{-\frac{\abs{x-z}^2}{C}}dz \\
   &\leq C[u]_{C^{0,\alpha}}\abs{x_1-x_2}^{\alpha+2\sigma}.
\end{align*}
Thus, $[H^{-\sigma}u]_{C^{0,\alpha+2\sigma}}\leq C\norm{u}_{C^{0,\alpha}_H}.$ For the decay, we put
$$H^{-\sigma}u(x)=\int_{\overline{B}}(u(z)-u(x))F_{-\sigma}(x,z)~dz+\int_{\overline{B}^c}(u(z)-u(x))F_{-\sigma}(x,z)~dz+u(x)H^{-\sigma}1(x),$$
where $\overline{B}=B\left(x,\frac{1}{1+\abs{x}}\right)$. We have
$$\int_{\overline{B}}\abs{u(z)-u(x)}F_{-\sigma}(x,z)~dz\leq C[u]_{C^{0,\alpha}}\int_{\overline{B}}\frac{\abs{z-x}^{\alpha}}{\abs{x-z}^{n-2\sigma}}~dz\leq [u]_{C^{0,\alpha}}\frac{C}{(1+\abs{x})^{\alpha+2\sigma}},$$
and
$$\abs{\int_{\overline{B}^c}(u(z)-u(x))F_{-\sigma}(x,z)~dz+u(x)H^{-\sigma}1(x)}\leq\int_{\overline{B}^c}\abs{u(z)}F_{-\sigma}(x,z)~dz+2\abs{u(x)}\abs{H^{-\sigma}1(x)}.$$
To estimate the very last integral, we can proceed as we did for the integral part of the operator $T$ in the proof of Proposition \ref{Prop:tecnico2}, splitting the integral in annulus (details are left to the reader). By Lemma \ref{Lem:H-sig 1}, $\abs{u(x)H^{-\sigma}1(x)}\leq C[u]_{M^\alpha}(1+\abs{x})^{-(\alpha+2\sigma)}$. This concludes the proof of Theorem B \textit{(B1)}.

In order to prove \textit{(B2)}, we observe that, by using the boundedness of the first order Hermite-Riesz transforms on $C^{0,\alpha}_H$ (Theorem \ref{Thm:R y Calfa}), we get
$$\norm{A_iH^{-\sigma}u}_{C^{0,\alpha+2\sigma-1}_H}=\norm{\mathcal{R}_iH^{-\sigma+1/2}u}_{C^{0,\alpha+2\sigma-1}_H}\leq C\norm{H^{-\sigma+1/2}u}_{C^{0,\alpha+2\sigma-1}_H}\leq C\norm{u}_{C^{0,\alpha}_H},$$
where in the last inequality we applied Theorem A \textit{(A1)} if $-\sigma+1/2>0$, and the case \textit{(B1)} just proved above if $-\sigma+1/2<0$. The case $\sigma=1/2$ is contained in Theorem \ref{Thm:R y Calfa}.

Under the hypotheses of \textit{(B3)}, we have to prove that $A_iA_jH^{-\sigma}u$ belongs to $C^{0,\alpha+2\sigma-2}_H$. But $A_iA_jH^{-\sigma}u=\mathcal{R}_{ij}H^{1-\sigma}u$. Therefore, Theorem A \textit{(A1)}, and Theorem \ref{Thm:R y Calfa} give the result.

\section{Computational Lemmas}\label{Section:Tecnica}

\begin{lem}\label{Lem:exp est}
For each positive number $a$, let
$$\psi_{s,z}^a(x)=e^{-a\left[s\abs{x+z}^2+\frac{1}{s}\abs{x-z}^2\right]},\qquad x,z\in\Real^n,~s\in(0,1).$$
Then,
\begin{equation}\label{psi est}
\psi_{s,z}^a(x)\leq e^{-\frac{a}{4}\abs{x}\abs{x-z}}e^{-\frac{a}{4}\frac{\abs{x-z}^2}{s}}.
\end{equation}
\end{lem}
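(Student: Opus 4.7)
Since $\exp$ is increasing, the asserted inequality is equivalent to the pointwise bound on the exponents,
\begin{equation*}
s\abs{x+z}^2 + \tfrac{1}{s}\abs{x-z}^2 \;\geq\; \tfrac{1}{4}\abs{x}\abs{x-z} + \tfrac{1}{4s}\abs{x-z}^2,
\end{equation*}
after cancelling the common positive factor $a$. Subtracting the last term on the right from the corresponding term on the left, the inequality reduces to
\begin{equation*}
s\abs{x+z}^2 + \tfrac{3}{4s}\abs{x-z}^2 \;\geq\; \tfrac{1}{4}\abs{x}\abs{x-z}.
\end{equation*}
Notice that this is independent of $a$, as it should be.

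Next, I would use the elementary decomposition $x = \tfrac12(x+z) + \tfrac12(x-z)$, which by the triangle inequality gives $\abs{x} \leq \tfrac12\abs{x+z} + \tfrac12\abs{x-z}$. Multiplying by $\tfrac14\abs{x-z}$, it is enough to verify
\begin{equation*}
s\abs{x+z}^2 + \tfrac{3}{4s}\abs{x-z}^2 \;\geq\; \tfrac{1}{8}\abs{x+z}\abs{x-z} + \tfrac{1}{8}\abs{x-z}^2.
\end{equation*}

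For the cross term, AM--GM applied to $s\abs{x+z}^2$ and a small fraction of the $\abs{x-z}^2$ summand gives $s\abs{x+z}^2 + \tfrac{1}{64s}\abs{x-z}^2 \geq \tfrac{1}{4}\abs{x+z}\abs{x-z}$, which comfortably dominates $\tfrac{1}{8}\abs{x+z}\abs{x-z}$. The remainder of the $\abs{x-z}^2$-coefficient is $\tfrac{3}{4s}-\tfrac{1}{64s} = \tfrac{47}{64 s}$, and since $s\in(0,1)$ this is at least $\tfrac{47}{64}>\tfrac{1}{8}$, so what is left suffices to absorb $\tfrac{1}{8}\abs{x-z}^2$. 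Adding the two estimates gives the required inequality, finishing the proof.

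There is no serious obstacle here: the statement is a purely elementary inequality for quadratic forms, the only structural input being that $s<1$ provides slack in the $\tfrac{1}{s}\abs{x-z}^2$ term to simultaneously keep $\tfrac{1}{4s}\abs{x-z}^2$ for later use (this is the form in which the estimate will be applied, e.g.\ in the heat kernel factor $e^{-\abs{x-z}^2/(Cs)}$) and to control the cross term $\abs{x+z}\abs{x-z}$ via AM--GM. The constant $\tfrac14$ on the right of \eqref{psi est} is not sharp; any smaller constant would work and would simply amount to choosing different weights in the AM--GM step.
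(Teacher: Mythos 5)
Your proof is correct. After cancelling $a$ and subtracting $\tfrac{1}{4s}\abs{x-z}^2$, the target $s\abs{x+z}^2+\tfrac{3}{4s}\abs{x-z}^2\geq\tfrac14\abs{x}\abs{x-z}$ is indeed handled by the midpoint decomposition $\abs{x}\leq\tfrac12\abs{x+z}+\tfrac12\abs{x-z}$ plus one AM--GM for the cross term and the fact that $s<1$ gives room in the remaining $\abs{x-z}^2/s$ weight; all the numerical bookkeeping ($\tfrac{1}{64s}$, $\tfrac{47}{64s}>\tfrac18$) checks out.

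The paper's proof takes a genuinely different, though equally elementary, route. It splits the exponent in half, bounds one half trivially by $\tfrac{a}{2}\abs{x-z}^2/s$, bounds the other half below by $\tfrac{a}{2}\abs{x+z}\abs{x-z}$ via AM--GM, and then converts $\abs{x+z}$ to $\abs{x}$ by a case analysis on the sign of $x\cdot z$: when $x\cdot z>0$ one has $\abs{x+z}\geq\abs{x}$ directly, and when $x\cdot z\leq0$ one instead has $\abs{x-z}\geq\abs{x}$ and absorbs the factor into the $\abs{x-z}^2/s$ term (using $s<1$). Your version replaces that sign dichotomy with the single inequality $\abs{x}\leq\tfrac12\abs{x+z}+\tfrac12\abs{x-z}$, which handles both sign regimes simultaneously and is, in that respect, slightly cleaner; the paper's version keeps a larger constant in the intermediate bound ($\tfrac{a}{2}$ rather than $\tfrac14$) before weakening to $\tfrac{a}{4}$ at the end. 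Both exploit $s<1$ once and both hinge on AM--GM for the cross term, so the approaches are of comparable weight, just organized around different auxiliary inequalities.
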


\begin{proof}
We have
$$\psi_{s,z}^a(x)\leq e^{-\frac{a}{2}\frac{\abs{x-z}^2}{s}}e^{-\frac{a}{2}\left[s\abs{x+z}^2+\frac{1}{s}\abs{x-z}^2\right]}\leq e^{-\frac{a}{2}\frac{\abs{x-z}^2}{s}}e^{-\frac{a}{2}\abs{x-z}\abs{x+z}}.$$
The first inequality above is obvious. For the second one, we argue as follows: if $\abs{x+z}\leq\abs{x-z}$ then it is clearly valid; when $\abs{x-z}<\abs{x+z}$ we minimize the function $\theta(s)=\frac{a}{2}\left[s\abs{x+z}^2+\frac{1}{s}\abs{x-z}^2\right]$, $s\in(0,1)$, to get $\theta(s)\geq\frac{a}{2}\abs{x-z}\abs{x+z}$. To obtain the desired estimate let us first assume that $x\cdot z>0$. Then $\abs{x+z}\geq\abs{x}$, and $e^{-\frac{a}{2}\frac{\abs{x-z}^2}{s}}e^{-\frac{a}{2}\abs{x-z}\abs{x+z}}\leq e^{-\frac{a}{4}\frac{\abs{x-z}^2}{s}}e^{-\frac{a}{4}\abs{x}\abs{x-z}}$. If $x\cdot z\leq0$, then $\abs{x-z}\geq\abs{x}$, and $e^{-\frac{a}{2}\frac{\abs{x-z}^2}{s}}e^{-\frac{a}{2}\abs{x-z}\abs{x+z}}\leq e^{-\frac{a}{4}\frac{\abs{x-z}^2}{s}}e^{-\frac{a}{4}\frac{\abs{x}\abs{x-z}}{s}}\leq e^{-\frac{a}{4}\frac{\abs{x-z}^2}{s}}e^{-\frac{a}{4}\abs{x}\abs{x-z}}$. Thus, \eqref{psi est} follows.
\end{proof}

\begin{rem}
Note that Lemma \ref{Lem:exp est} gives the estimate $\displaystyle G_{t(s)}(x,z)\leq C\left(\frac{1-s}{s}\right)^{n/2}e^{-\frac{\abs{x}\abs{x-z}}{C}}e^{-\frac{\abs{x-z}^2}{Cs}}$, $s\in(0,1)$, $x,z\in\Real^n$, which appeared in Lemma 5.10 of \cite{Stinga-Torrea}.
\end{rem}

\begin{lem}\label{Lem:B-T}
Let $\eta,\rho\in\Real$. Then, for all $x,z\in\Real^n$,
$$\int_0^1\left(\frac{1-s}{s}\right)^{n/2}\frac{1}{s^\eta}~e^{-C\left[s\abs{x+z}^2+\frac{1}{s}\abs{x-z}^2\right]}~d\mu_\rho(s)\leq Ce^{-\frac{\abs{x}\abs{x-z}}{C}}e^{-\frac{\abs{x-z}^2}{C}}\cdot I_{\eta,\rho}(x,z),$$
where
$$I_{\eta,\rho}(x,z)=
\left\{
  \begin{array}{ll}
    {\displaystyle\frac{1}{\abs{x-z}^{n+2\eta+2\rho}}}, & \hbox{if }~n/2+\eta+\rho>0, \\
    1+\log\left(\frac{C}{\abs{x-z}^2}\right)\chi_{\set{\frac{C}{\abs{x-z}^2}>1}}(x-z), & \hbox{if }~n/2+\eta+\rho=0, \\
    1, & \hbox{if }~n/2+\eta+\rho<0. \\
  \end{array}
\right.
$$
\end{lem}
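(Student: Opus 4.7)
The plan is to apply Lemma \ref{Lem:exp est} to extract the two exponential factors appearing on the right-hand side, then reduce the remaining $s$-integral to a one-variable computation that splits into three cases according to the sign of $\lambda := n/2 + \eta + \rho$. By Lemma \ref{Lem:exp est}, after possibly relabelling the generic constant $C$, the exponential in the integrand is bounded by $e^{-|x||x-z|/C}\,e^{-|x-z|^2/(Cs)}$. The first factor is $s$-independent and pulls out of the integral; for the second, splitting $e^{-|x-z|^2/(Cs)} = e^{-|x-z|^2/(2Cs)} \cdot e^{-|x-z|^2/(2Cs)}$ and using $1/s \geq 1$ in one of the factors produces the desired $e^{-|x-z|^2/C}$. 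It then remains to show that
\[
J := \int_0^1 \Bigl(\tfrac{1-s}{s}\Bigr)^{n/2} s^{-\eta}\, e^{-|x-z|^2/(Cs)}\, d\mu_\rho(s) \leq C\, I_{\eta,\rho}(x,z).
\]

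Next I would split $J$ at $s = 1/2$. On $(1/2,1)$, $(1-s)/s \asymp (1-s)$, $s^{-\eta}$ is bounded, $e^{-|x-z|^2/(Cs)} \leq 1$, and by \eqref{est dmu rho} $d\mu_\rho(s) \lesssim (1-s)^{-1}(-\log(1-s))^{-1-\rho}\,ds$. This reduces the piece to $\int_{1/2}^1 (1-s)^{n/2-1}(-\log(1-s))^{-1-\rho}\,ds$, and the change of variable $u = -\log(1-s)$ converts it to an integral of $u^{-1-\rho} e^{-(n/2)u}$ on $(0,\infty)$, which is finite for every $n \geq 1$ and every $\rho \in \mathbb{R}$. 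So the contribution near $s = 1$ is bounded by a constant and is absorbed into $I_{\eta,\rho}$.

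On $(0,1/2)$, using $(1-s)/s \asymp 1/s$ and $d\mu_\rho(s) \asymp s^{-1-\rho}\,ds$, the integral reduces (for $|x-z|>0$) to $\int_0^{1/2} s^{-\lambda-1}\, e^{-|x-z|^2/(Cs)}\,ds$, and the substitution $u = |x-z|^2/s$ converts this into
\[
\frac{1}{|x-z|^{2\lambda}}\int_{2|x-z|^2}^\infty u^{\lambda - 1}\, e^{-u/C}\, du.
\]
The three cases of the statement now follow from elementary estimates of this one-variable integral. If $\lambda > 0$, extend the integration to $(0,\infty)$ to produce a $\Gamma(\lambda)$-factor, leaving the bound $C/|x-z|^{2\lambda} = C/|x-z|^{n+2\eta+2\rho}$. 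If $\lambda < 0$, drop the exponential and integrate the power to obtain $(2|x-z|^2)^\lambda/|\lambda|$, which exactly cancels $|x-z|^{-2\lambda}$ and yields a constant. If $\lambda = 0$, split at $u = 1$: the tail $\int_1^\infty u^{-1}e^{-u/C}\,du$ is an absolute constant, while $\int_{2|x-z|^2}^1 u^{-1}\,du = \log(1/(2|x-z|^2))$ precisely when $|x-z|^2 < 1/2$, producing exactly the claimed logarithmic correction.

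The main obstacle is the borderline case $\lambda = 0$, where I have to verify that the logarithmic blow-up near $|x-z| = 0$ is captured correctly by $\chi_{\{C/|x-z|^2 > 1\}}$ with the right constant, and that nothing in the reduction costs me uniformity in $x,z$. The other two cases are standard gamma-type computations, and the initial reduction from the full exponential in $|x+z|^2$ and $|x-z|^2$ to a purely one-variable $s$-integral is essentially done for me by Lemma \ref{Lem:exp est} and the asymptotics \eqref{est dmu rho}.
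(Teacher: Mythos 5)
Your proof is correct and follows essentially the same path as the paper's: both rely on Lemma \ref{Lem:exp est} to extract the two exponential factors, split the $s$-integral at $s=1/2$, invoke the asymptotics \eqref{est dmu rho} for $d\mu_\rho$, reduce the singular piece near $s=0$ to an incomplete Gamma-type integral via a change of variable $u\sim|x-z|^2/s$, and then treat the three sign cases of $\lambda=n/2+\eta+\rho$ by elementary one-variable estimates. The minor differences (you split $e^{-|x-z|^2/(Cs)}$ before the change of variable while the paper does it afterwards via $e^{-2r}=e^{-r}e^{-r}$, your near-$1$ substitution actually lands on $(\log 2,\infty)$ rather than $(0,\infty)$, and absorbing the near-$1$ constant into $I_{\eta,\rho}$ when $\lambda>0$ tacitly uses the paper's standing inequality $r^\eta e^{-r}\leq C_\eta e^{-r/2}$) are all cosmetic.
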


\begin{proof}
By \eqref{psi est}, we get
\begin{align*}
\lefteqn{\int_0^1\left(\frac{1-s}{s}\right)^{n/2}\frac{1}{s^\eta}~e^{-C\left[s\abs{x+z}^2+\frac{1}{s}\abs{x-z}^2\right]}~d\mu_\rho(s)}\\ &\leq Ce^{-\frac{\abs{x}\abs{x-z}}{C}}\left(\int_0^{1/2}\frac{1}{s^{n/2+\eta+\rho}}~e^{-\frac{\abs{x-z}^2}{Cs}}~\frac{ds}{s}+ e^{-\frac{\abs{x-z}^2}{C}}\int_{1/2}^1(1-s)^{n/2}~d\mu_\rho(s)\right)  \\
   &= Ce^{-\frac{\abs{x}\abs{x-z}}{C}}\left(\frac{C}{\abs{x-z}^{n+2\eta+2\rho}}\int_{\frac{\abs{x-z}^2}{C}}^\infty r^{n/2+\eta+\rho}e^{-2r}~\frac{dr}{r}+ Ce^{-\frac{\abs{x-z}^2}{C}}\right) \\
   &\leq Ce^{-\frac{\abs{x}\abs{x-z}}{C}}\left(\frac{e^{-\frac{\abs{x-z}^2}{C}}}{\abs{x-z}^{n+2\eta+2\rho}}\int_{\frac{\abs{x-z}^2}{C}}^\infty r^{n/2+\eta+\rho}e^{-r}~\frac{dr}{r}+e^{-\frac{\abs{x-z}^2}{C}}\right) \\
   &= Ce^{-\frac{\abs{x}\abs{x-z}}{C}}\cdot I.
\end{align*}

If $n/2+\eta+\rho>0$, we have
$$I\leq\frac{e^{-\frac{\abs{x-z}^2}{C}}}{\abs{x-z}^{n+2\eta+2\rho}}\int_0^\infty r^{n/2+\eta+\rho}e^{-r}~\frac{dr}{r}+e^{-\frac{\abs{x-z}^2}{C}}\leq \frac{C}{\abs{x-z}^{n+2\eta+2\rho}}~e^{-\frac{\abs{x-z}^2}{C}}.$$

Suppose now that $n/2+\eta+\rho\leq0$. Consider two cases: if $\frac{\abs{x-z}^2}{C}\geq1$, then,
$$I\leq e^{-\frac{\abs{x-z}^2}{C}}\left[\frac{1}{\abs{x-z}^{n+2\eta+2\rho}}\int_1^\infty r^{n/2+\eta+\rho}e^{-r}\frac{dr}{r}+1\right]=Ce^{-\frac{\abs{x-z}^2}{C}}\left[\frac{C}{\abs{x-z}^{n+2\eta+2\rho}}+1\right]\leq C,$$
and, when $\frac{\abs{x-z}^2}{C}<1$, we have
\begin{align*}
I &\leq e^{-\frac{\abs{x-z}^2}{C}}\left[\frac{1}{\abs{x-z}^{n+2\eta+2\rho}}\left(\int_{\frac{\abs{x-z}^2}{C}}^1r^{n/2+\eta+\rho}~\frac{dr}{r}+C\right)+1\right] \\
&\leq Ce^{-\frac{\abs{x-z}^2}{C}}\cdot
\left\{
  \begin{array}{ll}
    1+\log\left(\frac{C}{\abs{x-z}^2}\right), & \hbox{if }~n/2+\eta+\rho=0, \\
    1, & \hbox{if }~n/2+\eta+\rho<0.
  \end{array}
\right.
\end{align*}
\end{proof}

\begin{lem}\label{Lem:F sig est}
Denote by $\mathcal{F}$ any of the kernels $F_\sigma(x,z)$ (defined in \eqref{FyB}), or $F_{\pm 2k,\sigma}(x,z)$ (given in Theorems \ref{Thm:H+2k} and \ref{Thm:H-2k}). Then,
\begin{equation}\label{F sig est}
\abs{\mathcal{F}(x,z)}\leq\frac{C}{\abs{x-z}^{n+2\sigma}}~e^{-\frac{\abs{x}\abs{x-z}}{C}}e^{-\frac{\abs{x-z}^2}{C}},
\end{equation}
for all $x,z\in\Real^n$, and
\begin{equation}\label{F sig suave}
\abs{\mathcal{F}(x_1,z)-\mathcal{F}(x_2,z)}\leq \frac{C\abs{x_1-x_2}}{\abs{x_2-z}^{n+1+2\sigma}}~e^{-\frac{\abs{z}\abs{x_2-z}}{C}}e^{-\frac{\abs{x_2-z}^2}{C}},
\end{equation}
for all $x_1,x_2\in\Real^n$ such that $\abs{x_1-z}>2\abs{x_1-x_2}$.
\end{lem}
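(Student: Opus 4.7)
The plan is to represent each kernel $\mathcal{F}$ as a single integral against $d\mu_\sigma(s)$ of a Gaussian-type factor, and then invoke Lemma \ref{Lem:B-T} with the appropriate values of $\eta$ and $\rho$. The crucial input will be Lemma \ref{Lem:exp est}, which converts the ambient Gaussian $e^{-C[s\abs{x+z}^2+\frac{1}{s}\abs{x-z}^2]}$ into the form $e^{-\abs{x}\abs{x-z}/C}e^{-\abs{x-z}^2/(Cs)}$ demanded by Lemma \ref{Lem:B-T}; the latter then produces precisely the singularity $\abs{x-z}^{-(n+2\eta+2\rho)}$ together with the required exponential decay.

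For the size bound \eqref{F sig est} on $F_\sigma$, I would start from \eqref{F sig s}: the prefactor is equivalent to $((1-s)/s)^{n/2}$ (since $(1+s)^{n/2}$ is bounded on $(0,1)$), so Lemma \ref{Lem:B-T} with $\eta=0$, $\rho=\sigma$ applies in the regime $n/2+\sigma>0$ and yields exactly the desired $\abs{x-z}^{-(n+2\sigma)}$ factor. For $F_{2k,\sigma}$, the representation in Theorem \ref{Thm:H+2k} carries only the harmless extra factor $((1-s)/(1+s))^k\leq1$, so the same argument works. For $F_{-2k,\sigma}$ the factor $((1+s)/(1-s))^k$ blows up near $s=1$, which forces a split at $s=1/2$: on $(0,1/2)$ this factor is bounded and $\phi_{2k}\equiv 0$, reducing to the case already treated; on $(1/2,1)$ the Taylor-type estimate \eqref{dif2} supplies exactly the compensating factor $((1-s)/(1+s))^{k+n/2}$, cancelling both the blowup and the $((1-s)/s)^{n/2}$ prefactor, so that the remaining integral of $((1-s)/(1+s))^{n/2}$ against $d\mu_\sigma$ converges by \eqref{est dmu rho}. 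The resulting bare bound $Ce^{-\abs{x}\abs{x-z}/C}e^{-\abs{x-z}^2/C}$ is then upgraded to the stated inequality by absorbing a fraction of the Gaussian decay into the missing $\abs{x-z}^{-(n+2\sigma)}$ factor.

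For the smoothness bound \eqref{F sig suave}, I would invoke the mean value theorem, writing $\mathcal{F}(x_1,z)-\mathcal{F}(x_2,z)=(x_1-x_2)\cdot\int_0^1\nabla_x\mathcal{F}(\tilde x,z)\,dt$ where $\tilde x=x_2+t(x_1-x_2)$ lies on the segment $[x_1,x_2]$. Differentiating $G_{t(s)}$ in $x$ produces the polynomial factor $-\tfrac{s}{2}(x+z)-\tfrac{1}{2s}(x-z)$, whose growth is absorbed by the inequality $r^{1/2}e^{-r}\leq Ce^{-r/2}$ at the cost of an extra $s^{-1/2}$, giving $\abs{\nabla_x G_{t(s)}(x,z)}\leq C((1-s)/s)^{n/2}s^{-1/2}e^{-C'[s\abs{x+z}^2+\frac{1}{s}\abs{x-z}^2]}$. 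Lemma \ref{Lem:B-T} with $\eta=1/2$, $\rho=\sigma$ (still in the range $n/2+1/2+\sigma>0$) now delivers the singularity $\abs{\tilde x-z}^{-(n+1+2\sigma)}$; under the hypothesis $\abs{x_1-z}>2\abs{x_1-x_2}$ one has $\abs{\tilde x-z}\sim\abs{x_2-z}$, giving the correct exponent.

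Two technical obstacles remain and are the points I expect to be the hardest. First, Lemma \ref{Lem:B-T} outputs an exponential involving $\abs{\tilde x}$, whereas \eqref{F sig suave} demands $\abs{z}$; the identity $\abs{\tilde x}\geq\abs{z}-\abs{\tilde x-z}$ together with $\abs{\tilde x-z}\leq 3\abs{x_2-z}$ produces an extra $e^{C'\abs{x_2-z}^2}$ factor, which must be absorbed by reserving half of the Gaussian decay $e^{-\abs{x_2-z}^2/C}$ (the constants being harmlessly enlarged). Second, the smoothness bound for $F_{-2k,\sigma}$ on $(1/2,1)$ requires a Taylor-type estimate for $\partial_{x_i}(G_{t(s)}-\phi_{2k})$ analogous to \eqref{dif2}; this is obtained by differentiating the Mehler formula \eqref{M_r} in $x$, noting that each $x$-derivative of $M_r(x,z)$ still satisfies the exponential bound via Lemma \ref{Lem:exp est} (after absorbing the polynomial prefactor as above), and then applying Taylor's formula in $r=(1-s)/(1+s)$ about $r=0$ to supply the decisive $r^k$-factor.
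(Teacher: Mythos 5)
Your proposal follows essentially the same route as the paper: Lemma \ref{Lem:B-T} with $\eta=0,\rho=\sigma$ (resp.\ $\eta=1/2,\rho=\sigma$) for the size (resp.\ smoothness) estimate on $F_\sigma$, the bound $0\le F_{2k,\sigma}\le F_\sigma$, the split at $s=1/2$ together with \eqref{dif1}--\eqref{dif2} for $F_{-2k,\sigma}$, and a Mehler-formula Taylor expansion for $\nabla_x\bigl(G_{t(s)}-\phi_{2k}\bigr)$ on $(1/2,1)$. The one place where your plan is more involved than it needs to be is your ``first technical obstacle'': since $\psi_{s,z}^a(x)=e^{-a[s\abs{x+z}^2+\frac{1}{s}\abs{x-z}^2]}$ is symmetric under $x\leftrightarrow z$, Lemma \ref{Lem:exp est} --- and hence Lemma \ref{Lem:B-T} --- already yields $e^{-\abs{z}\abs{x-z}/C}$ in place of $e^{-\abs{x}\abs{x-z}/C}$, so no triangle-inequality conversion is needed; moreover, the absorption you sketch does not quite close as stated (using $\abs{\tilde x}\ge\abs{z}-\abs{\tilde x-z}$ produces a factor $e^{+\abs{\tilde x-z}^2/C}$ that exactly cancels the $e^{-\abs{\tilde x-z}^2/C}$ coming out of Lemma \ref{Lem:B-T}), and would require you to first split the input Gaussian and feed only part of it into Lemma \ref{Lem:B-T} --- the symmetry observation avoids all of this.
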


\begin{proof}
Let us first consider $\mathcal{F}=F_\sigma$. The estimate in \eqref{F sig est} is already stated in \cite{Stinga-Torrea}, Lemma 5.11. Nevertheless, we can prove it here quickly by using, in \eqref{F sig s}, Lemma \ref{Lem:B-T}, with $\eta=0$ and $\rho=\sigma$. To get \eqref{F sig suave}, we observe that, by the Mean Value Theorem,
\begin{equation}\label{G_s suave}
\abs{G_{t(s)}(x_1,z)-G_{t(s)}(x_2,z)}\leq C\abs{x_1-x_2}\left(\frac{1-s}{s}\right)^{n/2}\frac{1}{s^{1/2}}~e^{-\frac{1}{8}\left[s\abs{\xi+z}^2+\frac{1}{s}\abs{\xi-z}^2\right]},
\end{equation}
for some $\xi=(1-\lambda)x_1+\lambda x_2$, $\lambda\in[0,1]$. Then, by Lemma \ref{Lem:B-T}, with $\eta=1/2$ and $\rho=\sigma$,
\begin{align*}
  \abs{F_\sigma(x_1,z)-F_\sigma(x_2,z)} &\leq \abs{x_1-x_2}\sup_{\set{\xi=(1-\lambda)x_1+\lambda x_2:\lambda\in[0,1]}}\abs{\nabla_xF_\sigma(\xi,z)} \\
   &\leq C\abs{x_1-x_2}\sup_\xi\int_0^1\left(\frac{1-s}{s}\right)^{n/2}\frac{1}{s^{1/2}}~e^{-\frac{1}{8}\left[s\abs{\xi+z}^2+\frac{1}{s}\abs{\xi-z}^2\right]}~d\mu_\sigma(s) \\
   &\leq C\abs{x_1-x_2}\sup_\xi\frac{1}{\abs{\xi-z}^{n+1+2\sigma}}~e^{-\frac{\abs{z}\abs{\xi-z}}{C}}e^{-\frac{\abs{\xi-z}^2}{C}} \\
   &\leq C\frac{\abs{x_1-x_2}}{\abs{x_2-z}^{n+1+2\sigma}}~e^{-\frac{\abs{z}\abs{x_2-z}}{C}}e^{-\frac{\abs{x_2-z}^2}{C}},
\end{align*}
where in the last inequality we used that $\abs{\xi-z}\geq\frac{1}{2}\abs{x_2-z}$, since $\abs{x_1-z}>2\abs{x_1-x_2}$. In a similar way we can prove both estimates for $\mathcal{F}=F_{2k,\sigma}$, because $0\leq F_{2k,\sigma}(x,z)\leq F_\sigma(x,z)$, and the details are left to the reader. Note that, by \eqref{dif1}-\eqref{dif2}, up to a multiplicative constant we have
\begin{align*}
    \abs{F_{-2k,\sigma}(x,z)} &= \abs{\int_0^{1/2}\left(\frac{1+s}{1-s}\right)^kG_{t(s)}(x,z)~d\mu_\sigma(s)+\int_{1/2}^1\left(\frac{1+s}{1-s}\right)^k\left[G_{t(s)}(x,z)-\phi_{2k}(x)\right]d\mu_\sigma(s)} \\
    &\leq C\left[F_\sigma(x,z)+e^{-\frac{\abs{x}\abs{x-z}}{C}}e^{-\frac{\abs{x-z}^2}{C}}\int_{1/2}^1\left(\frac{1-s}{1+s}\right)^{n/2}d\mu_\sigma(s)\right],
\end{align*}
and therefore, \eqref{F sig est} is valid for $F_{-2k,\sigma}$. By \eqref{G_s suave},
\begin{align}
\nonumber\lefteqn{\int_0^{1/2}\left(\frac{1+s}{1-s}\right)^k\abs{G_{t(s)}(x_1,z)-G_{t(s)}(x_2,z)}d\mu_\sigma(s)} \\ &\leq \label{0-1/2suave} C\abs{x_1-x_2}\sup_\xi\int_0^1\left(\frac{1-s}{s}\right)^{n/2}\frac{1}{s^{1/2}}~e^{-\frac{1}{8}\left[s\abs{\xi+z}^2+\frac{1}{s}\abs{\xi-z}^2\right]}~d\mu_\sigma(s).
\end{align}
Recall the definition of $M_r(x,z)$ given in \eqref{M_r}. It can be checked that
$$\abs{\frac{d^k}{dr^k}\nabla_x M_r(x,z)}\leq Ce^{-\frac{\abs{x}\abs{x-z}}{C}}e^{-\frac{\abs{x-z}^2}{C}},\qquad r\in(0,1/3).$$
Thus, by Taylor's formula,
\begin{equation}\label{nabla dif}
\abs{\nabla_x\left[G_{t(s)}(x,z)-\phi_{2k}(x,z,s)\right]}\leq C\left(\frac{1-s}{1+s}\right)^{k+n/2}e^{-\frac{\abs{x}\abs{x-z}}{C}}e^{-\frac{\abs{x-z}^2}{C}},\qquad s\in(1/2,1),
\end{equation}
and, consequently, when $\abs{x_1-z}>2\abs{x_1-x_2}$,
\begin{align}
    \nonumber\lefteqn{\int_{1/2}^1\left(\frac{1+s}{1-s}\right)^k\abs{(G_{t(s)}(x_1,z)-\phi_{2k}(x_1,z,s))-(G_{t(s)}(x_2,z)-\phi_{2k}(x_2,z,s))}d\mu_\sigma(s)} \\
    &\leq
\nonumber C\abs{x_1-x_2}\sup_\xi\int_{1/2}^1\left(\frac{1+s}{1-s}\right)^k\abs{\nabla_x\left[G_{t(s)}(\xi,z)-\phi_{2k}(\xi,z,s)\right]}~d\mu_\sigma(s) \\
   \label{1/2-1suave} &\leq
C\abs{x_1-x_2}\sup_\xi e^{-\frac{\abs{z}\abs{\xi-z}}{C}}e^{-\frac{\abs{\xi-z}^2}{C}}\leq  C\abs{x_1-x_2}e^{-\frac{\abs{z}\abs{x_2-z}}{C}}e^{-\frac{\abs{x_2-z}^2}{C}}.
\end{align}
Pasting estimates \eqref{0-1/2suave} and \eqref{1/2-1suave}, \eqref{F sig suave} follows for $\mathcal{F}=F_{-2k,\sigma}$.
\end{proof}

\begin{lem}\label{Lem:B sig est}
Denote by $\mathcal{B}$ any of the functions $B_\sigma$ or $B_{\pm 2k,\sigma}$ defined in \eqref{FyB} and in Theorems \ref{Thm:H+2k} and \ref{Thm:H-2k}. Then $\mathcal{B}\in C^\infty(\Real^n)$ and, for all $x\in\Real^n$,
\begin{equation}\label{B sig est+suav}
\abs{\mathcal{B}(x)}\leq C\left(1+\abs{x}^{2\sigma}\right),\qquad\hbox{ and }\qquad\abs{\nabla\mathcal{B}(x)}\leq C
\left\{
  \begin{array}{ll}
    \abs{x}, & \hbox{if }\abs{x}\leq1, \\
    \abs{x}^{2\sigma-1}, & \hbox{if }\abs{x}>1.
  \end{array}
\right.
\end{equation}
\end{lem}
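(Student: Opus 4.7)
The plan is to treat the three functions uniformly: after the Meda change of variables \eqref{Meda transf}, each of $B_\sigma$, $B_{2k,\sigma}$, $B_{-2k,\sigma}$ takes the form
$$\mathcal{B}(x) = \frac{1}{\Gamma(-\sigma)} \int_0^1 \bigl[h(s,x) - 1\bigr]\,d\mu_\sigma(s),$$
where $h(s,x)$ is an explicit smooth, \emph{radial} (in $x$) function with $h(0,x) = 1$ for every $x$. The bracket $h(s,x) - 1$ provides the cancellation needed to absorb the singularity $d\mu_\sigma(s) \sim s^{-1-\sigma}\,ds$ near the origin (cf.\ \eqref{est dmu rho}), whereas the polynomial growth in $|x|$ is produced by the behaviour of $h$ for $s$ away from $0$. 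The radial symmetry of $h(s,\cdot)$ — which for $B_{-2k,\sigma}$ is inherited from the rotation invariance of the projection kernels $K_j(x,z) = \sum_{|\nu|=j} h_\nu(x)h_\nu(z)$ — forces $\nabla\mathcal{B}(0) = 0$, and this is what ultimately yields the linear bound $C|x|$ near the origin.

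For the size estimate $|\mathcal{B}(x)| \leq C(1 + |x|^{2\sigma})$, I would split the integral at $s = 1/2$. On $(0,1/2)$, a first order Taylor expansion of $h$ at $s = 0$ gives
$$\bigl|h(s,x) - 1\bigr| \leq C\,s\,(1 + |x|^2) \quad \text{whenever } s(1+|x|^2) \leq 1,$$
together with the trivial uniform bound $|h(s,x) - 1| \leq C$. The natural splitting scale is $s^\ast := \min\{1/2,\,(1+|x|^2)^{-1}\}$; using the first bound on $(0, s^\ast)$ and the second on $(s^\ast, 1/2)$, together with $d\mu_\sigma(s) \leq C s^{-1-\sigma}\,ds$, yields a contribution of order $1$ for $|x| \leq 1$ and of order $|x|^2 (s^\ast)^{1-\sigma} + (s^\ast)^{-\sigma} \approx |x|^{2\sigma}$ for $|x| > 1$. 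On $(1/2,1)$ the measure $d\mu_\sigma$ is finite and $|h(s,x) - 1|$ is uniformly bounded; for $B_{-2k,\sigma}$ this last bound relies on the cancellation \eqref{dif2}, which turns the amplification $\bigl(\tfrac{1+s}{1-s}\bigr)^k$ into the factor $\bigl(\tfrac{1-s}{1+s}\bigr)^{n/2}$ after integrating the Gaussian decay in $z$, and hence produces an integrable integrand against $d\mu_\sigma$ near $s=1$.

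For the gradient, differentiate under the integral sign. In the cases $B_\sigma$ and $B_{2k,\sigma}$ one computes $\nabla_x h(s,x) = -\tfrac{2sx}{1+s^2}\,h(s,x)$, so the integrand automatically acquires a factor $s|x|\,e^{-cs|x|^2}$ (the exponential coming from $h$ itself). Convergence at $s = 0$ no longer requires the $-1$ cancellation, and the change of variables $u = cs|x|^2$ produces
$$|x|\int_0^{1/2} s^{-\sigma}\,e^{-cs|x|^2}\,ds \leq C|x|^{2\sigma-1}\qquad(|x|\geq 1),$$
while for $|x| \leq 1$ the $e^{-cs|x|^2}$ is harmless and the factor $|x|$ directly yields $C|x|$. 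The contribution from $(1/2,1)$ is exponentially small in $|x|^2$ and acceptable in both regimes. For $B_{-2k,\sigma}$, the gradient on $(1/2,1)$ is bounded by the same mechanism via \eqref{nabla dif}; to upgrade the resulting $O(1)$ bound to $O(|x|)$ near the origin, I would invoke radial symmetry (so that $\nabla_x h_{-k}(s,0) = 0$) plus Taylor in the radial variable, bounding the second derivative by the same estimates. Finally, $\mathcal{B} \in C^\infty(\Real^n)$ follows from iterated differentiation under the integral: each $x$-derivative contributes polynomial factors in $x$ that are absorbed by the Gaussian $e^{-s|x|^2/(1+s^2)}$.

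The main obstacle is the $s \in (1/2,1)$ portion of $B_{-2k,\sigma}$: the naive integrand blows up like $\bigl(\tfrac{1+s}{1-s}\bigr)^k$, and only the sharp cancellation encoded in \eqref{dif1}--\eqref{dif2} and \eqref{nabla dif} renders this contribution (and its gradient) integrable against $d\mu_\sigma$ near $s = 1$. Once that is in place and the splitting scale $s^\ast = 1/(1+|x|^2)$ is identified, the remaining steps are routine bookkeeping.
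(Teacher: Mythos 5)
Your proof follows essentially the same route as the paper's: Meda's change of variables to pull out the measure $d\mu_\sigma$, the first--order Taylor/Mean--Value cancellation of the integrand at $s=0$ to tame the $s^{-1-\sigma}$ singularity, a split at the scale $\approx(1+\abs{x}^2)^{-1}$ (equivalently, the paper's substitution $r=s\abs{x}^2/4$ after splitting at $s=1/2$) to produce the $\abs{x}^{2\sigma}$ and $\abs{x}^{2\sigma-1}$ growth, and, for $B_{-2k,\sigma}$ on $s\in(1/2,1)$, the cancellation \eqref{dif1}--\eqref{dif2} and \eqref{nabla dif} to control the factor $\bigl(\tfrac{1+s}{1-s}\bigr)^k$. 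Writing all three cases as $\int_0^1[h(s,x)-1]\,d\mu_\sigma(s)$ with $h(0,\cdot)\equiv 1$ is a tidy unification, but it is a notational repackaging of what the paper does for $B_\sigma$, $B_{2k,\sigma}$ (via the split $I+II$) and $B_{-2k,\sigma}$ (via $III+IV$) separately.

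One point worth flagging: your radial--symmetry argument to upgrade the gradient bound near $x=0$ for $B_{-2k,\sigma}$ is in fact needed, and the paper glosses over it. The paper's proof concludes with $\abs{\nabla_x IV}\leq C$, which, combined with $\abs{\nabla_x III}\leq C\abs{x}$, gives only $\abs{\nabla B_{-2k,\sigma}}\leq C$ for $\abs{x}\leq1$ rather than the stated $C\abs{x}$. Your observation that $\int_{\Real^n}K_j(x,z)\,dz$ (and hence $\int\phi_{2k}(x,z,s)\,dz$ and $e^{-t(s)H}1(x)$) are radial in $x$, so $\nabla_x h(s,0)=0$, together with a Taylor bound using a second--derivative analogue of \eqref{nabla dif}, does close this. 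That second--derivative bound is not proved in the paper but follows by the identical argument: differentiate Mehler's kernel $M_r(x,z)$ twice in $x$ and once more $k$ times in $r$ on $r\in(0,1/3)$, and apply Lemma \ref{Lem:exp est}. You should state that explicitly rather than asserting ``the same estimates'' in passing. It is also worth noting that the full strength of the near--origin bound is not actually used downstream: Proposition \ref{Prop:tecnico2}(c) only requires $\nabla B\in L^\infty(\Real^n)$, so the paper's weaker $\abs{\nabla_x IV}\leq C$ is harmless for the applications, but your proposal is the more faithful proof of the Lemma as stated.
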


\begin{proof}
The first inequality in \eqref{B sig est+suav} for the case $\mathcal{B}=B_\sigma$ is contained in \cite{Stinga-Torrea}, Lemma 5.11. The identity
$$e^{-tH}1(x)=\int_{\Real^n}G_t(x,z)~dz=\frac{1}{(2\pi\cosh2t)^{n/2}}~e^{-\frac{\tanh2t}{2}\abs{x}^2},$$
(stated in \cite{Harboure-deRosa-Segovia-Torrea}) and Meda's change of parameters \eqref{Meda transf} give
\begin{eqnarray*}
  B_\sigma(x) = \frac{1}{\Gamma(-\sigma)}\int_0^1\left[\left(\frac{1-s^2}{2\pi(1+s^2)}\right)^{n/2}e^{-\frac{s}{1+s^2}\abs{x}^2}-1\right]~d\mu_\sigma(s).
\end{eqnarray*}
We differentiate under the integral sign to see that $B_\sigma\in C^\infty(\Real^n)$, and
$$\abs{\nabla B_\sigma(x)}=\abs{\frac{2x}{\Gamma(-\sigma)}\int_0^1\frac{s}{1+s^2}\left(\frac{1-s^2}{2\pi(1+s^2)}\right)^{n/2}e^{-\frac{s}{1+s^2}\abs{x}^2}~d\mu_\sigma(s)} \leq C\abs{x}\int_0^1se^{-\frac{s}{2}\abs{x}^2}d\mu_\sigma(s)=:\widetilde{I}(x).$$
By \eqref{est dmu rho},
$$\widetilde{I}(x)\leq C\abs{x}\left[\int_0^{1/2}e^{-\frac{s}{2}\abs{x}^2}~\frac{ds}{s^\sigma}+ e^{-\frac{\abs{x}^2}{C}}\int_{1/2}^1~d\mu_\sigma(s)\right]= C\abs{x}^{2\sigma-1}\int_0^{\frac{\abs{x}^2}{4}}e^{-r}~\frac{dr}{r^\sigma}+C\abs{x}e^{-\frac{\abs{x}^2}{C}}.$$
If $\abs{x}\leq1$,
$$\int_0^{\frac{\abs{x}^2}{4}}e^{-r}~\frac{dr}{r^\sigma}\leq\int_0^{\frac{\abs{x}^2}{4}}~\frac{dr}{r^\sigma}=C\abs{x}^{2-2\sigma},$$
and, if $\abs{x}>1$,
$$\int_0^{\frac{\abs{x}^2}{4}}e^{-r}~\frac{dr}{r^\sigma}\leq\int_0^{1/4}~\frac{dr}{r^\sigma}+ \int_{1/4}^{\frac{\abs{x}^2}{4}}e^{-r}~dr=C-e^{-\frac{\abs{x}^2}{C}}\leq C.$$
Hence, \eqref{B sig est+suav} with $\mathcal{B}=B_\sigma$ is proved.

We can write
\begin{align*}
    B_{2k,\sigma}(x) &= \frac{1}{\Gamma(-\sigma)}\int_0^1\left[\left(\frac{1-s}{1+s}\right)^k\left(\frac{1-s^2}{2\pi(1+s^2)}\right)^{n/2}-1\right]e^{-\frac{s}{1+s^2}\abs{x}^2}~d\mu_\sigma(s) \\
    & \quad+~\frac{1}{\Gamma(-\sigma)}\int_0^1\left(e^{-\frac{s}{1+s^2}\abs{x}^2}-1\right)~d\mu_\sigma(s)=:I+II.
\end{align*}
The bounds for $I$ and $II$ can be deduced as in the proof of Lemma 5.11 of \cite{Stinga-Torrea}. We give the calculation for completeness. For both terms we use \eqref{est dmu rho} and the Mean Value Theorem. That is,
$$\abs{I}\leq C\int_0^{1/2}\abs{\left(\frac{1-s}{1+s}\right)^k\left(\frac{1-s^2}{2\pi(1+s^2)}\right)^{n/2}-1}\frac{ds}{s^{\sigma+1}}+\int_{1/2}^1~d\mu_\sigma(s)\leq C\int_0^{1/2}s~\frac{ds}{s^{1+\sigma}}+C=C.$$
For $II$ we have to consider two cases. Assume first that $\abs{x}^2\leq2$. Then,
$$\abs{II}\leq C\int_0^{1/2}\abs{e^{-\frac{s}{1+s^2}\abs{x}^2}-1}~\frac{ds}{s^{1+\sigma}}+\int_{1/2}^1~d\mu_\sigma(s)\leq C\int_0^{1/2}\abs{x}^2s~\frac{ds}{s^{1+\sigma}}+C\leq C.$$
In the case $\abs{x}^2>2$,
\begin{align*}
  \abs{II} &\leq \abs{x}^2\int_0^{\frac{1}{\abs{x}^2}}s~\frac{ds}{s^{1+\sigma}}+\int_{\frac{1}{\abs{x}^2}}^1~d\mu_\sigma(s)\leq \abs{x}^2\int_0^{\frac{1}{\abs{x}^2}}s^{-\sigma}~ds+\int_\frac{1}{\abs{x}^2}^1\frac{ds}{(1-s)\left(-\log(1-s)\right)^{1+\sigma}} \\
   &= C\abs{x}^{2\sigma}+C\left[-\log\left(1-\frac{1}{\abs{x}^2}\right)\right]^{-\sigma}\leq C\abs{x}^{2\sigma},
\end{align*}
since $-\log(1-s)\sim s$, as $s\sim0$. On the other hand, observe that $\abs{\nabla B_{2k,\sigma}(x)}\leq\widetilde{I}(x)$. Thus, \eqref{B sig est+suav} follows with $\mathcal{B}=B_{2k,\sigma}$.

When $\mathcal{B}=B_{-2k,\sigma}$,
\begin{align*}
    B_{-2k,\sigma}(x) &= \frac{1}{\Gamma(-\sigma)}\int_0^{1/2}\left[\left(\frac{1+s}{1-s}\right)^k\left(\frac{1-s^2}{2\pi(1+s^2)}\right)^{n/2}e^{-\frac{s}{1+s^2}\abs{x}^2}-1\right]~d\mu_\sigma(s) \\
    &\quad +~\frac{1}{\Gamma(-\sigma)}\int_{1/2}^1\left[\left(\frac{1+s}{1-s}\right)^k\int_{\Real^n}\left(G_{t(s)}(x,z)-\phi_{2k}(x,z,s)\right)~dz-1\right]~d\mu_\sigma(s) \\
    &= III+IV.
\end{align*}
If we write $III$ as
$$\frac{1}{\Gamma(-\sigma)}\int_0^{1/2}\left[\left(\frac{1+s}{1-s}\right)^k\left(\frac{1-s^2}{2\pi(1+s^2)}\right)^{n/2}-1\right]~d\mu_\sigma(s)+\frac{1}{\Gamma(-\sigma)}\int_0^{1/2}\left(e^{-\frac{s}{1+s^2}\abs{x}^2}-1\right)~d\mu_\sigma(s),$$
then, we can handle these two terms as we did for $I$ and $II$ above to get $\abs{III}\leq C\left(1+\abs{x}^{2\sigma}\right)$. By \eqref{dif1}-\eqref{dif2}, $\abs{IV}\leq C$. For the gradient of $B_{-2k,\sigma}$, similar estimates to those used for $\nabla B_\sigma$ can be applied for the term $\nabla_xIII$. Finally, \eqref{nabla dif} implies that $\abs{\nabla_xIV}\leq C$. The proof is complete.
\end{proof}

The following Lemma contains a small refinement of the estimate for the kernel $F_{-\sigma}(x,z)$ given in \cite{Bongioanni-Torrea}, Proposition 2.

\begin{lem}\label{Lem:F-sig est}
Take $\sigma\in(0,1]$. Then, for all $x,z\in\Real^n$,
\begin{equation}\label{F-sig est}
0\leq F_{-\sigma}(x,z)\leq C
\left\{
  \begin{array}{ll}
    \frac{1}{\abs{x-z}^{n-2\sigma}}~e^{-\frac{\abs{x}\abs{x-z}}{C}}e^{-\frac{\abs{x-z}^2}{C}}, & \hbox{ if }n>2\sigma, \\
    e^{-\frac{\abs{x}\abs{x-z}}{C}}e^{-\frac{\abs{x-z}^2}{C}}\left[1+\log\left(\frac{C}{\abs{x-z}^2}\right)\chi_{\set{\frac{C}{\abs{x-z}^2}>1}}(x-z)\right], & \hbox{ if }n=2\sigma, \\
    e^{-\frac{\abs{x}\abs{x-z}}{C}}e^{-\frac{\abs{x-z}^2}{C}}, & \hbox{ if }n<2\sigma.
  \end{array}
\right.
\end{equation}
If $\mathrm{F}(x,z)$ denotes any of the kernels $\nabla_xF_{-\sigma}(x,z)$, $x_iF_{-\sigma}(x,z)$ or $z_iF_{-\sigma}(x,z)$, then,
\begin{equation}\label{Ai F-sig est}
\abs{\mathrm{F}(x,z)}\leq C
\left\{
  \begin{array}{ll}
    \frac{1}{\abs{x-z}^{n+1-2\sigma}}~e^{-\frac{\abs{x}\abs{x-z}}{C}}e^{-\frac{\abs{x-z}^2}{C}}, & \hbox{ if }n>2\sigma-1, \\
    e^{-\frac{\abs{x}\abs{x-z}}{C}}e^{-\frac{\abs{x-z}^2}{C}}\left[1+\log\left(\frac{C}{\abs{x-z}^2}\right)\chi_{\set{\frac{C}{\abs{x-z}^2}>1}}(x-z)\right], & \hbox{ if }n=2\sigma-1.
  \end{array}
\right.
\end{equation}
Moreover, when $\abs{x_1-z}\geq2\abs{x_1-x_2}$,
\begin{multline}\label{F-sig suave}
\abs{F_{-\sigma}(x_1,z)-F_{-\sigma}(x_2,z)} \\ \leq C\abs{x_1-x_2}
\left\{
  \begin{array}{ll}
    \frac{1}{\abs{x_2-z}^{n+1-2\sigma}}~e^{-\frac{\abs{z}\abs{x_2-z}}{C}}e^{-\frac{\abs{x_2-z}^2}{C}}, & \hbox{if }\sigma\neq1, \\
    e^{-\frac{\abs{z}\abs{x_2-z}}{C}}e^{-\frac{\abs{x_2-z}^2}{C}}\left[1+\log\left(\frac{C}{\abs{x-z}^2}\right)\chi_{\set{\frac{C}{\abs{x-z}^2}>1}}(x-z)\right], & \hbox{if }\sigma=1,
  \end{array}
\right.
\end{multline}
and,
\begin{equation}\label{nab F-sig suav}
\abs{\mathrm{F}(x_1,z)-\mathrm{F}(x_2,z)}\leq C\frac{\abs{x_1-x_2}}{\abs{x_2-z}^{n+2-2\sigma}}~e^{-\frac{\abs{z}\abs{x_2-z}}{C}}e^{-\frac{\abs{x_2-z}^2}{C}}.
\end{equation}
\end{lem}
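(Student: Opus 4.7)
My strategy is to reduce every estimate in the lemma to a direct application of Lemma~\ref{Lem:B-T}, after rewriting $F_{-\sigma}$ via Meda's change of parameters \eqref{Meda transf}. Using $d\mu_{-\sigma}$ as introduced in \eqref{est dmu rho}, we have
$$F_{-\sigma}(x,z)=\frac{1}{\Gamma(\sigma)}\int_0^1\left(\frac{1-s^2}{4\pi s}\right)^{n/2}e^{-\frac{1}{4}\left[s\abs{x+z}^2+\frac{1}{s}\abs{x-z}^2\right]}\,d\mu_{-\sigma}(s).$$
The integrand is nonnegative (hence $F_{-\sigma}\geq 0$) and of precisely the form controlled by Lemma~\ref{Lem:B-T} with $\eta=0$ and $\rho=-\sigma$. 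Since $n/2+\eta+\rho=n/2-\sigma$, the three subcases of that lemma produce exactly the three subcases of \eqref{F-sig est}.

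For \eqref{Ai F-sig est} I would differentiate under the integral sign: $\partial_{x_i}$ brings down the factor $-\tfrac{s}{2}(x_i+z_i)-\tfrac{1}{2s}(x_i-z_i)$, while multiplication by $x_i$ or $z_i$, after writing $x_i=\tfrac12(x_i+z_i)+\tfrac12(x_i-z_i)$ and similarly for $z_i$, yields a linear combination of $(x_i+z_i)$ and $(x_i-z_i)$. In each case the worst new factor is either $s\abs{x+z}$ or $\abs{x-z}/s$; absorbing either one into half of the Gaussian via $re^{-r^2/2}\leq C$ costs at most an extra $s^{-1/2}$ weight. Hence each of the three kernels in \eqref{Ai F-sig est} is dominated by the integrand of Lemma~\ref{Lem:B-T} with $\eta=1/2$ and $\rho=-\sigma$, giving the polynomial rate $\abs{x-z}^{-(n+1-2\sigma)}$ when $n>2\sigma-1$ and the logarithmic case when $n=2\sigma-1$.

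For the smoothness estimates \eqref{F-sig suave} and \eqref{nab F-sig suav} I would combine the Mean Value Theorem with the previous paragraph. Setting $\xi=(1-\lambda)x_1+\lambda x_2$, the hypothesis $\abs{x_1-z}\geq 2\abs{x_1-x_2}$ forces $\abs{\xi-z}\geq\tfrac{1}{2}\abs{x_1-z}\geq\tfrac{1}{3}\abs{x_2-z}$ uniformly in $\lambda\in[0,1]$, so it suffices to bound $\abs{\nabla_x F_{-\sigma}(\xi,z)}$, respectively the gradient of each of $\nabla_xF_{-\sigma}$, $x_iF_{-\sigma}$, $z_iF_{-\sigma}$, uniformly in $\xi$. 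The first differentiation has already been treated ($\eta=1/2$), so \eqref{F-sig suave} follows for $\sigma\neq 1$. A second differentiation contributes one more $s^{-1/2}$ factor, so \eqref{nab F-sig suav} follows from Lemma~\ref{Lem:B-T} with $\eta=1$ and $\rho=-\sigma$; since $\sigma\leq 1$, $n/2+1-\sigma>0$, so the clean polynomial bound is obtained. To replace the Gaussian decay $e^{-\abs{x}\abs{x-z}/C}$ by $e^{-\abs{z}\abs{x_2-z}/C}$, I would split into the regime $\abs{x_2-z}\leq\abs{z}/2$ (where $\abs{\xi}\geq\abs{z}-\abs{x_2-z}-\abs{x_1-x_2}\gtrsim\abs{z}$) and the regime $\abs{x_2-z}\geq\abs{z}/2$ (where the $e^{-\abs{x-z}^2/C}$ factor already dominates $e^{-\abs{z}\abs{x_2-z}/C}$).

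I expect the main technical obstacle to be purely bookkeeping: (a) identifying the correct exponent $\eta$ produced by each instance of differentiation or coordinate multiplication, and (b) the boundary case $\sigma=1$ in \eqref{F-sig suave}, where $n/2+1/2-\sigma$ can vanish (for $n=1$) and forces the logarithmic branch of Lemma~\ref{Lem:B-T}. Once these are handled, everything else is routine propagation of the Meda-parametrised Gaussian estimates through the integral.
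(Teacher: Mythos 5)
Your proposal is correct and follows essentially the same route as the paper: rewrite $F_{-\sigma}$ via Meda's change of parameters, reduce every size estimate to Lemma~\ref{Lem:B-T} with the bookkeeping $\eta=0,1/2,1$ for zero, one, and two differentiations or coordinate multiplications, and obtain the smoothness estimates by the Mean Value Theorem. The only cosmetic differences are in how the coordinate multiplications $x_iF_{-\sigma}$, $z_iF_{-\sigma}$ are dispatched (the paper splits into the cases $x\cdot z\geq 0$ and $x\cdot z<0$, you write $x_i=\tfrac12(x_i+z_i)+\tfrac12(x_i-z_i)$; both absorb the relevant factor into the Gaussian at the cost of $s^{-1/2}$), and in how the exponential $e^{-\abs{\xi}\abs{\xi-z}/C}$ is converted to $e^{-\abs{z}\abs{x_2-z}/C}$: your two-regime split works, but you should tighten the threshold to something like $\abs{x_2-z}\leq\abs{z}/4$ (with $\abs{z}/2$ the triangle inequality only yields $\abs{\xi}\geq 0$, not $\abs{\xi}\gtrsim\abs{z}$, once you account for $\abs{x_1-x_2}\leq\abs{x_2-z}$). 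The slicker route, implicit in the paper, is to note that $\psi^a_{s,z}(x)$ is symmetric in $x$ and $z$, so Lemma~\ref{Lem:exp est} (hence Lemma~\ref{Lem:B-T}) already gives $e^{-\abs{z}\abs{\xi-z}/C}$ directly, after which $\abs{\xi-z}\geq\tfrac13\abs{x_2-z}$ finishes.
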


\begin{proof}
By \eqref{Meda transf},
\begin{equation}\label{F-sig}
F_{-\sigma}(x,z)=\frac{1}{\Gamma(\sigma)}\int_0^1G_{t(s)}(x,z)~d\mu_{-\sigma}(s)=C\int_0^1\left(\frac{1-s^2}{s}\right)^{n/2}e^{-\frac{1}{4}\left[s\abs{x+z}^2+\frac{1}{s}\abs{x-z}^2\right]}~d\mu_{-\sigma}(s).
\end{equation}
Then apply Lemma \ref{Lem:B-T}, with $\eta=0$ and $\rho=-\sigma$, to get \eqref{F-sig est}. Differentiation with respect to $x$ inside the integral in \eqref{F-sig} gives
\begin{equation}\label{nab F-sig}
\abs{\nabla_xF_{-\sigma}(x,z)}\leq C\int_0^1\left(\frac{1-s}{s}\right)^{n/2}\frac{1}{s^{1/2}}~e^{-\frac{1}{8}\left[s\abs{x+z}^2+\frac{1}{s}\abs{x-z}^2\right]}~d\mu_{-\sigma}(s),
\end{equation}
and then Lemma \ref{Lem:B-T}, with $\eta=1/2$ and $\rho=-\sigma$, implies \eqref{Ai F-sig est} with $\mathrm{F}(x,z)=\nabla_xF_{-\sigma}(x,z)$. Take $x,z\in\Real^n$. If $x\cdot z\geq 0$, then $\abs{x}\leq\abs{x+z}$ and, in this situation, $\abs{x}F_{-\sigma}(x,z)$ is bounded by the RHS of \eqref{nab F-sig}. If $x\cdot z<0$, we have $\abs{x}\leq\abs{x-z}$, and in this case
$$\abs{x}F_{-\sigma}(x,z)\leq C\abs{x-z}e^{-\frac{1}{8}\abs{x-z}^2}\int_0^1\left(\frac{1-s}{s}\right)^{n/2}e^{-\frac{1}{8}\left[s\abs{x+z}^2+\frac{1}{s}\abs{x-z}^2\right]}~d\mu_{-\sigma}(s).$$
Therefore, by Lemma \ref{Lem:B-T}, we obtain \eqref{Ai F-sig est} for $\mathrm{F}(x,z)=x_iF_{-\sigma}(x,z)$. The same reasoning applies to $\mathrm{F}(x,z)=z_iF_{-\sigma}(x,z)$, since $\abs{z}\leq\abs{z-x}+\abs{x}$. To derive \eqref{F-sig suave}, we follow the proof of \eqref{F sig suave} in Lemma \ref{Lem:F sig est}, with $-\sigma$ in the place of $\sigma$, and we use Lemma \ref{Lem:B-T}. Estimate \eqref{nab F-sig suav} for $\mathrm{F}(x,z)=\nabla_xF_{-\sigma}(x,z)$ can be deduced by using the Mean Value Theorem and Lemma \ref{Lem:B-T}, since
\begin{multline*}
    \partial^2_{x_i,x_j}F_{-\sigma}(x,z)=\frac{1}{\Gamma(\sigma)}\int_0^1\left(\frac{1-s^2}{4\pi s}\right)^{n/2}e^{-\frac{1}{4}\left[s\abs{x+z}^2+\frac{1}{s}\abs{x-z}^2\right]}\times \\
\times\left[\left(-\frac{s}{2}(x_i+z_i)-\frac{1}{2s}(x_i-z_i)\right)\left(-\frac{s}{2}(x_j+z_j)-\frac{1}{2s}(x_j-z_j)\right)+\delta_{ij}\left(-\frac{s}{2}-\frac{1}{2s}\right)\right]~d\mu_{-\sigma}(s),
\end{multline*}
gives that
\begin{equation}\label{D2 H-sig}
\abs{D^2_xF_{-\sigma}(x,z)}\leq C\int_0^1\left(\frac{1-s}{s}\right)^{n/2}\frac{1}{s}~e^{-C\left[s\abs{x+z}^2+\frac{1}{s}\abs{x-z}^2\right]}~d\mu_{-\sigma}(s).
\end{equation}
Similar ideas can also be used to prove \eqref{nab F-sig suav} when $\mathrm{F}(x,z)$ is either $x_iF_{-\sigma}(x,z)$ or $z_iF_{-\sigma}(x,z)$. We skip the details.
\end{proof}

\begin{lem}\label{Lem:H-sig 1}
The function $H^{-\sigma}1$ belongs to the space $C^\infty(\Real^n)$, and
$$\abs{H^{-\sigma}1(x)}\leq\frac{C}{(1+\abs{x})^{2\sigma}},\qquad\hbox{and}\qquad\abs{\nabla H^{-\sigma}1(x)}\leq\frac{C}{(1+\abs{x})^{1+2\sigma}}.$$
\end{lem}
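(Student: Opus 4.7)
The starting point is the explicit integral representation
$$
H^{-\sigma}1(x)=\frac{1}{\Gamma(\sigma)}\int_0^\infty e^{-tH}1(x)\,\frac{dt}{t^{1-\sigma}}
=\frac{1}{\Gamma(\sigma)}\int_0^1\left(\frac{1-s^2}{2\pi(1+s^2)}\right)^{n/2} e^{-\frac{s}{1+s^2}|x|^2}\,d\mu_{-\sigma}(s),
$$
obtained from the formula for $e^{-tH}1(x)$ recalled in the proof of Lemma \ref{Lem:B sig est} combined with Meda's change of parameters. Since the integrand is smooth in $x$ and any $x$-derivative introduces only polynomial factors in $x$ and $s$ (which are absorbed by $e^{-s|x|^2/(1+s^2)}$), differentiation under the integral sign is justified; hence $H^{-\sigma}1\in C^\infty(\mathbb{R}^n)$.

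The size estimate is proved by splitting the integral at $s=1/2$ and using \eqref{est dmu rho}. The piece over $(1/2,1)$ is bounded by $C\,e^{-|x|^2/C}$, which decays faster than any polynomial. For the piece over $(0,1/2)$, I would estimate
$$
\int_0^{1/2}\Bigl(\tfrac{1-s^2}{1+s^2}\Bigr)^{n/2}e^{-\frac{s}{1+s^2}|x|^2}\,d\mu_{-\sigma}(s)
\leq C\int_0^{1/2}e^{-s|x|^2/2}\,\frac{ds}{s^{1-\sigma}},
$$
and perform the change of variables $r=s|x|^2/2$ when $|x|\geq 1$ to get the bound $C|x|^{-2\sigma}\int_0^{|x|^2/4}r^{\sigma-1}e^{-r}\,dr\leq C|x|^{-2\sigma}$. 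For $|x|\leq 1$ the same integral is simply bounded by a constant (using $\sigma>0$ so that $s^{\sigma-1}$ is integrable near $0$). Combining the two regimes yields the desired bound $|H^{-\sigma}1(x)|\leq C(1+|x|)^{-2\sigma}$.

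For the gradient, differentiating under the integral sign produces the extra factor $-\tfrac{2sx}{1+s^2}$, so
$$
|\nabla H^{-\sigma}1(x)|\leq C|x|\int_0^1 s\,e^{-\frac{s}{2}|x|^2}\,d\mu_{-\sigma}(s).
$$
The same splitting at $s=1/2$ applies: the tail near $s=1$ is bounded by $C|x|e^{-|x|^2/C}$, while the integral near $s=0$ reduces, after the substitution $r=s|x|^2/2$ for $|x|\geq1$, to $C|x|^{-(1+2\sigma)}\int_0^{|x|^2/4}r^{\sigma}e^{-r}\,dr\leq C|x|^{-(1+2\sigma)}$. For $|x|\leq 1$ the integral is clearly bounded, and the prefactor $|x|$ gives the correct behaviour near the origin.

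The only point requiring any care is the extraction of the polynomial decay rate; the rest is routine. The argument is essentially a scaling argument via the substitution $r=s|x|^2/2$, and the $\sigma>0$ hypothesis ensures the relevant gamma integrals converge at $0$. No cancellation is needed since $H^{-\sigma}1$ is a positive function, in contrast to the functions $B_\sigma$ and $B_{\pm 2k,\sigma}$ treated in Lemma \ref{Lem:B sig est}.
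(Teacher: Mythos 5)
Your approach mirrors the paper's (same integral representation via Meda's change of variables, same split at $s=1/2$, same substitution $r=s|x|^2/2$), but there is a genuine gap in the gradient estimate. You write
\begin{equation*}
\abs{\nabla H^{-\sigma}1(x)}\leq C\abs{x}\int_0^1 s\,e^{-\frac{s}{2}\abs{x}^2}\,d\mu_{-\sigma}(s),
\end{equation*}
having discarded the factor $\left(\frac{1-s^2}{1+s^2}\right)^{n/2}$ by bounding it by $1$. That factor cannot be dropped here: near $s=1$ one has $d\mu_{-\sigma}(s)\sim \frac{ds}{(1-s)(-\log(1-s))^{1-\sigma}}$, and the substitution $u=-\log(1-s)$ turns $\int_{1/2}^1 d\mu_{-\sigma}(s)$ into $\int_{\log 2}^\infty u^{\sigma-1}\,du$, which diverges for every $\sigma\in(0,1]$ (this is in sharp contrast with $d\mu_{\sigma}$, which appears in Lemma \ref{Lem:B sig est} and \emph{is} finite near $s=1$). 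So your displayed bound is $+\infty$, and the subsequent claim that ``the tail near $s=1$ is bounded by $C|x|e^{-|x|^2/C}$'' does not follow from it. The same subtlety is hidden in your treatment of the size estimate, where you assert without computation that the piece over $(1/2,1)$ is $O(e^{-|x|^2/C})$.

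The fix is easy and is exactly what the paper does: retain $(1-s^2)^{n/2}$ in the tail estimate. Then near $s=1$ the integrand behaves like $(1-s)^{n/2-1}(-\log(1-s))^{\sigma-1}$, and the substitution $u=-\log(1-s)$, $1-s=e^{-u}$ yields $\int_{\log 2}^\infty e^{-un/2}u^{\sigma-1}\,du<\infty$ for all $n\geq 1$. With that correction, both the size and gradient estimates go through exactly as you outline, and the rest of your argument (the change of variables on $(0,1/2)$, the $|x|\leq 1$ vs.\ $|x|>1$ dichotomy, and the justification of differentiation under the integral sign) is sound and coincides with the paper's proof.
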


\begin{proof}
Observe that \eqref{Meda transf} applied to \eqref{H-sig1} gives
\begin{equation}\label{H-sig1 s}
H^{-\sigma}1(x)=\frac{1}{\Gamma(\sigma)}\int_0^1e^{-t(s)H}1(x)~d\mu_{-\sigma}(s)=C\int_0^1\left(\frac{1-s^2}{1+s^2}\right)^{n/2}e^{-\frac{s}{1+s^2}\abs{x}^2}~d\mu_{-\sigma}(s).
\end{equation}
Since
\begin{equation}\label{nab heat1}
\abs{\nabla_x e^{-t(s)H}1(x)}=2\abs{x}\frac{s}{1+s^2}\left(\frac{1-s^2}{2\pi(1+s^2)}\right)^{n/2}e^{-\frac{s}{1+s^2}\abs{x}^2}\leq Cs^{1/2}e^{-\frac{s}{C}\abs{x}^2},
\end{equation}
differentiation inside the integral sign in \eqref{H-sig1 s} is justified. By repeating this argument we obtain $H^{-\sigma}1\in C^\infty(\Real^n)$. To study the size of  $H^{-\sigma}1$, note that we can restrict to the case $\abs{x}>1$, because $H^{-\sigma}1$ is a continuous function. By \eqref{est dmu rho}, we have
$$\int_0^{1/2}\left(\frac{1-s^2}{1+s^2}\right)^{n/2}e^{-\frac{s}{1+s^2}\abs{x}^2}d\mu_{-\sigma}(s)\leq C\int_0^{1/2}e^{-\frac{s}{C}\abs{x}^2}\frac{ds}{s^{1-\sigma}}= C\abs{x}^{-2\sigma}\int_0^{\frac{\abs{x}^2}{2C}}e^{-r}\frac{dr}{r^{1-\sigma}}\leq C\abs{x}^{-2\sigma},$$
and
$$\int_{1/2}^1\left(\frac{1-s^2}{1+s^2}\right)^{n/2}e^{-\frac{s}{1+s^2}\abs{x}^2}d\mu_{-\sigma}(s)\leq Ce^{-C\abs{x}^2}\int_{1/2}^1\frac{(1-s)^{n/2-1}}{(-\log(1-s))^{1-\sigma}}~ds=Ce^{-C\abs{x}^2}.$$
Plugging these two estimates into \eqref{H-sig1 s} we get the bound for $H^{-\sigma}1$. For the growth of the gradient, we can use \eqref{nab heat1} and similar estimates as above to obtain the result.
\end{proof}

\begin{lem}\label{Lem:Rij est}
For $1\leq\abs{i},\abs{j}\leq n$, denote by $\mathrm{R}(x,z)$ any of the kernels $\partial^2_{x_i,x_j}F_{-1}(x,z)$, $x_i\partial_{x_j}F_{-1}(x,z)$ or $x_ix_jF_{-1}(x,z)$. Then
\begin{equation}\label{R size}
\abs{\mathrm{R}(x,z)}\leq\frac{C}{\abs{x-z}^n}~e^{-\frac{\abs{x}\abs{x-z}}{C}}e^{-\frac{\abs{x-z}^2}{C}},
\end{equation}
and, when $\abs{x_1-z}\geq2\abs{x_1-x_2}$,
\begin{equation}\label{Rij suave}
\abs{\mathrm{R}(x_1,z)-\mathrm{R}(x_2,z)}\leq C\frac{\abs{x_1-x_2}}{\abs{x_2-z}^{n+1}}~e^{-\frac{\abs{z}\abs{x_2-z}}{C}}e^{-\frac{\abs{x_2-z}^2}{C}}.
\end{equation}
As a consequence, the kernel of the second order Hermite-Riesz transforms $R_{ij}(x,z)=A_iA_jF_{-1}(x,z)$, also satisfies these size and smoothness estimates.
\end{lem}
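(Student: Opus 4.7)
The plan is to treat each of the three kernels $\partial^2_{x_i,x_j}F_{-1}$, $x_i\partial_{x_j}F_{-1}$, $x_ix_jF_{-1}$ separately and then deduce the case of $R_{ij}=A_iA_jF_{-1}$ from the algebraic expansion
\begin{equation*}
R_{ij}(x,z) = \partial^2_{x_i,x_j}F_{-1}(x,z) + x_j\partial_{x_i}F_{-1}(x,z) + x_i\partial_{x_j}F_{-1}(x,z) + x_ix_jF_{-1}(x,z) + \delta_{ij}F_{-1}(x,z)
\end{equation*}
(with appropriate sign changes when an index is negative) already displayed just before the lemma. Every size and smoothness estimate will ultimately be reduced to Lemma \ref{Lem:B-T} applied to an integral over $s\in(0,1)$ derived from the representation \eqref{F-sig} of $F_{-1}$, exactly in the spirit of the proof of Lemma \ref{Lem:F-sig est}.

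For the size bound \eqref{R size} on $\partial^2_{x_i,x_j}F_{-1}$, the integral estimate \eqref{D2 H-sig} is already in hand, and Lemma \ref{Lem:B-T} with $\eta=1$, $\rho=-1$ gives $n/2+\eta+\rho=n/2>0$ and the desired bound $C|x-z|^{-n}$ modulated by the exponentials. For $x_i\partial_{x_j}F_{-1}$ and $x_ix_jF_{-1}$, I would start from the bounds \eqref{Ai F-sig est} on $\nabla_xF_{-1}$ and \eqref{F-sig est} on $F_{-1}$, and absorb the prefactors $|x_i|$, $|x_ix_j|$ into the Gaussian decay via the elementary inequality $|x|^k e^{-|x||x-z|/C}\leq C_k|x-z|^{-k}e^{-|x||x-z|/(2C)}$, which supplies precisely the missing powers to reach $|x-z|^{-n}$. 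For the smoothness bound \eqref{Rij suave} I would apply the Mean Value Theorem, noting that the hypothesis $|x_1-z|\geq 2|x_1-x_2|$ forces $|\xi-z|\sim|x_2-z|$ along the segment from $x_1$ to $x_2$. The estimate for $\partial^2_{x_i,x_j}F_{-1}$ then reduces to a pointwise bound on $|D^3_xF_{-1}(\xi,z)|$: differentiating \eqref{F-sig} three times in $x$ produces an integrand with an extra factor $s^{-3/2}$, and Lemma \ref{Lem:B-T} with $\eta=3/2$, $\rho=-1$ yields $C|\xi-z|^{-(n+1)}$. For the two mixed kernels I would add and subtract, writing
\begin{equation*}
(x_1)_i\partial_{x_j}F_{-1}(x_1,z) - (x_2)_i\partial_{x_j}F_{-1}(x_2,z) = [(x_1)_i-(x_2)_i]\,\partial_{x_j}F_{-1}(x_1,z) + (x_2)_i\,[\partial_{x_j}F_{-1}(x_1,z)-\partial_{x_j}F_{-1}(x_2,z)],
\end{equation*}
and analogously for $x_ix_jF_{-1}$, controlling the first summand by the size bound on $\nabla F_{-1}$ together with $|(x_1)_i-(x_2)_i|\leq|x_1-x_2|$, and the second by \eqref{nab F-sig suav} (or \eqref{F-sig suave}); any surplus powers of $|x_2-z|$ are eaten by $|x_2-z|^k e^{-|x_2-z|^2/C}\leq C_k$.

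The main obstacle is the smoothness estimate for $x_i\partial_{x_j}F_{-1}$ because \eqref{nab F-sig suav} carries Gaussian decay $e^{-|z||x_2-z|/C}$ in $|z|$ rather than in $|x|$, so absorbing the multiplicative factor $|x_2|$ from the splitting above requires the decomposition $|x_2|\leq|z|+|x_2-z|$ followed by $|z|e^{-|z||x_2-z|/C_1}\leq C|x_2-z|^{-1}e^{-|z||x_2-z|/C_2}$ for some $C_2>C_1$, together with $|x_2-z|^k e^{-|x_2-z|^2/C}\leq C_k$, at the cost of enlarging the constants in the exponentials; the logarithmic term appearing in \eqref{F-sig suave} when $\sigma=1$ is harmless since $|x_2-z|^{n+1}\log(1/|x_2-z|^2)$ remains bounded near the origin. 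Once \eqref{R size} and \eqref{Rij suave} are established for each of the three kernels, the bounds for $R_{ij}$ follow by substituting them, together with those on $F_{-1}$ from Lemma \ref{Lem:F-sig est} (using $|x-z|^2 e^{-|x-z|^2/C}\leq C$ to upgrade $|x-z|^{-(n-2)}$ to $|x-z|^{-n}$), into the displayed expansion.
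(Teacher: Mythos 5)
Your overall plan — bound $\partial^2_{x_i,x_j}F_{-1}$ directly from \eqref{D2 H-sig} via Lemma \ref{Lem:B-T} with $\eta=1$, $\rho=-1$, absorb the multiplicative $x_i$-factors into the Gaussian decay, and handle smoothness by the Mean Value Theorem — is the same in spirit as the paper's. However, your chosen absorption inequality $\abs{x}^ke^{-\abs{x}\abs{x-z}/C}\leq C_k\abs{x-z}^{-k}e^{-\abs{x}\abs{x-z}/(2C)}$, while correct as an inequality, is too lossy to produce the claimed bound when $n$ is small. Take $n=1$, $\sigma=1$: there $F_{-1}(x,z)\leq Ce^{-\abs{x}\abs{x-z}/C}e^{-\abs{x-z}^2/C}$ with no power of $\abs{x-z}$, so after multiplying by $\abs{x}^2$ your inequality yields $\abs{x-z}^{-2}e^{-\cdots}$, which is strictly more singular than the target $\abs{x-z}^{-1}e^{-\cdots}=\abs{x-z}^{-n}e^{-\cdots}$. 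Similarly for $n=2$ the $\log$ in \eqref{F-sig est} survives multiplication by $\abs{x-z}^{-2}$ and cannot be swallowed by the remaining Gaussian factors; and for $n=1$ the kernel $x_i\partial_{x_j}F_{-1}$ suffers the same problem since \eqref{Ai F-sig est} then carries the log term. The claim that the absorption ``supplies precisely the missing powers'' is only true for $n>2$ (respectively $n>1$ for the gradient kernel); for lower dimensions the estimate overshoots.

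The fix — and the route the paper actually takes — is to absorb $\abs{x}$ not into the product $\abs{x}\abs{x-z}$ appearing in the outer Gaussian, but into the $s$-weighted Gaussian $e^{-s\abs{x+z}^2/4}$ inside the representation \eqref{F-sig}, using a case split: for $\abs{x}\leq 2$ the factor $\abs{x}^k$ is harmless; for $\abs{x}>2$ and $\abs{x}<2\abs{x-z}$ one writes $\abs{x}^k\leq C\abs{x-z}^k$ and uses $\abs{x-z}^ke^{-\abs{x-z}^2/C}\leq C$; for $\abs{x}>2$ and $\abs{x}\geq2\abs{x-z}$ one observes $\abs{x+z}^2=2\abs{x}^2-\abs{x-z}^2+2\abs{z}^2>\abs{x}^2$, so that $\abs{x}^ke^{-s\abs{x+z}^2/8}\leq Cs^{-k/2}$, which raises $\eta$ in Lemma \ref{Lem:B-T} by $k/2$ and lands exactly on $\abs{x-z}^{-n}$ for every $n\geq 1$, with no spare logarithms. (Equivalently, one could use $2\abs{x}\leq\abs{x+z}+\abs{x-z}$ and push each piece into the corresponding Gaussian, obtaining the same $\eta$-shift uniformly.) The same remark applies to your smoothness argument: after your add-and-subtract decomposition, the multiplicative factors $\abs{x_2}$ or $\abs{x_2}^2$ must be absorbed the same way, and the decomposition $\abs{x_2}\leq\abs{z}+\abs{x_2-z}$ followed by absorption into $e^{-\abs{z}\abs{x_2-z}/C}$ has exactly the same low-dimensional shortfall: for $n=1$ it leaves a logarithm from \eqref{F-sig suave} multiplied by $\abs{x_2-z}^{-2}$ that does not fit under $\abs{x_2-z}^{-(n+1)}$. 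Everything else — the treatment of $\partial^2_{x_i,x_j}F_{-1}$, the MVT setup with the third derivative bound, and the final assembly for $R_{ij}$ via the five-term expansion — is sound and matches the paper.
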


\begin{proof}
We put $\sigma=1$ in \eqref{D2 H-sig} and we use Lemma \ref{Lem:B-T}, with $\eta=1$ and $\rho=-1$, to obtain the desired estimate for $D^2_xF_{-1}$. From \eqref{nab F-sig},
\begin{equation}\label{x nab F-1}
\abs{x_i\partial_{x_j}F_{-1}(x,z)}\leq C\abs{x}\int_0^1\left(\frac{1-s}{s}\right)^{n/2}\frac{1}{s^{1/2}}~e^{-\frac{1}{8}\left[s\abs{x+z}^2+\frac{1}{s}\abs{x-z}^2\right]}~d\mu_{-1}(s),
\end{equation}
If $\abs{x}\leq2$, then Lemma \ref{Lem:B-T}, with $\eta=1/2$ and $\rho=-1$, applied to \eqref{x nab F-1} gives
$$\abs{x_i\partial_{x_j}F_{-1}(x,z)}\leq\frac{C}{\abs{x-z}^{n-1}}~e^{-\frac{\abs{x}\abs{x-z}}{C}}e^{-\frac{\abs{x-z}^2}{C}}.$$
Assume that $\abs{x}>2$ in \eqref{x nab F-1}. Consider first the case $\abs{x}<2\abs{x-z}$, then by Lemma \ref{Lem:B-T},
$$\abs{x_i\partial_{x_j}F_{-1}(x,z)}\leq C\int_0^1\left(\frac{1-s}{s}\right)^{n/2}e^{-C\left[s\abs{x+z}^2+\frac{1}{s}\abs{x-z}^2\right]}~d\mu_{-1}(s)\leq \frac{C}{\abs{x-z}^{n-2}}~e^{-\frac{\abs{x}\abs{x-z}}{C}}e^{-\frac{\abs{x-z}^2}{C}}.$$
In the other case, namely $\abs{x}\geq2\abs{x-z}$, we use the fact that $\abs{x}>2$ to see that $\abs{x+z}^2=2\abs{x}^2-\abs{x-z}^2+2\abs{z}^2>\abs{x}^2$. Hence,
$$\abs{x_j\partial_{x_j}F_{-1}(x,z)}\leq C\int_0^1\left(\frac{1-s}{s}\right)^{n/2}\frac{1}{s^{1/2}}~e^{-C\left[s\abs{x+z}^2+\frac{1}{s}\abs{x-z}^2\right]}~d\mu_{-1}(s)\leq \frac{C}{\abs{x-z}^n}~e^{-\frac{\abs{x}\abs{x-z}}{C}}e^{-\frac{\abs{x-z}^2}{C}}.$$
Collecting terms, we have \eqref{R size} for $\mathrm{R}(x,z)=x_j\partial_{x_j}F_{-1}(x,z)$. Finally, to obtain \eqref{R size} with $\mathrm{R}(x,z)=x_ix_jF_{-1}(x,z)$, we note that by \eqref{F-sig},
$$\abs{x_ix_jF_{-1}(x,z)}\leq C\abs{x}^2\int_0^1\left(\frac{1-s}{s}\right)^{n/2}e^{-\frac{1}{8}\left[s\abs{x+z}^2+\frac{1}{s}\abs{x-z}^2\right]}~d\mu_{-1}(s),$$
and we consider the cases $\abs{x}\leq2$ and $\abs{x}>2$ as before. In the second situation, we assume first that $\abs{x}\leq2\abs{x-z}$ and, then, that $\abs{x}\geq2\abs{x-z}$ (which implies $\abs{x}\leq\abs{x+z}$), and we use the method of the proof given for $x_j\partial_{x_i}F_{-1}$ above.

To prove \eqref{Rij suave} we can use the Mean Value Theorem and Lemma \ref{Lem:exp est} (see the proof of \eqref{F sig suave} and \eqref{nab F-sig suav}). We omit the details.
\end{proof}

\begin{lem}\label{Lem:Bruno}
Denote by $\mathrm{K}(x,z)$ any of the functions $\abs{x}^{2\sigma}F_{-\sigma}(x,z)$, $\abs{z}^{2\sigma}F_{-\sigma}(x,z)$, $0<\sigma\leq1$, or the kernel $x_i\partial_{x_j}F_{-1}(x,z)$. Then
$$\sup_x\int_{\Real^n}\abs{\mathrm{K}(x,z)}~dz\leq C.$$
\end{lem}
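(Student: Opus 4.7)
The plan is to dispatch the three families of kernel separately, the first two being quick and the third being the actual obstacle. For $\mathrm{K}(x,z)=\abs{x}^{2\sigma}F_{-\sigma}(x,z)$, the nonnegativity of $F_{-\sigma}$ together with formula \eqref{H-sig1} gives
$$\int_{\Real^n}\abs{x}^{2\sigma}F_{-\sigma}(x,z)\,dz=\abs{x}^{2\sigma}H^{-\sigma}1(x)\leq\frac{C\abs{x}^{2\sigma}}{(1+\abs{x})^{2\sigma}}\leq C$$
by Lemma \ref{Lem:H-sig 1}. For $\mathrm{K}(x,z)=\abs{z}^{2\sigma}F_{-\sigma}(x,z)$ I would use $\abs{z}^{2\sigma}\leq C(\abs{x}^{2\sigma}+\abs{x-z}^{2\sigma})$: the first term reduces to the previous case, and the second is controlled by the size estimate \eqref{F-sig est} of Lemma \ref{Lem:F-sig est}. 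In each of the three regimes distinguished there by $n$ versus $2\sigma$, the extra factor $\abs{x-z}^{2\sigma}$ cancels (or more than cancels, with a harmless logarithm in the borderline case) the local singularity, so $\abs{x-z}^{2\sigma}F_{-\sigma}(x,z)$ is majorized by a function of $\abs{x-z}$ alone with integrable singularity at the origin and Gaussian decay at infinity, yielding a finite constant independent of $x$.

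The third kernel, $x_i\partial_{x_j}F_{-1}(x,z)$, is the main obstacle: the pointwise bound produced by Lemma \ref{Lem:Rij est} is of order $\abs{x-z}^{-n}$, which is critically non-integrable in $\Real^n$. To circumvent this I would return to the integral representation used in the proof of Lemma \ref{Lem:Rij est} and, after absorbing the polynomial prefactors $s\abs{x+z}$ and $\frac{1}{s}\abs{x-z}$ into the Gaussian via $re^{-r^2/8}\leq C$, arrive at
$$\abs{x_i\partial_{x_j}F_{-1}(x,z)}\leq C\abs{x}\int_0^1\Bigl(\frac{1-s^2}{s}\Bigr)^{n/2}\frac{1}{s^{1/2}}\,e^{-\frac{1}{8}\bigl[s\abs{x+z}^2+\frac{1}{s}\abs{x-z}^2\bigr]}\,d\mu_{-1}(s).$$
Crucially, I would then apply Fubini and carry out the $z$-integration first. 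Completing the square in $z$ (the minimum of $Q(z):=s\abs{x+z}^2+\frac{1}{s}\abs{x-z}^2$ is attained at $z_0=\frac{1-s^2}{1+s^2}\,x$ with value $\frac{4s\abs{x}^2}{1+s^2}$, and the Hessian is $\frac{2(1+s^2)}{s}\,\iden$) yields
$$\int_{\Real^n}e^{-Q(z)/8}\,dz=C\Bigl(\frac{s}{1+s^2}\Bigr)^{n/2}\,e^{-\frac{s\abs{x}^2}{2(1+s^2)}},$$
which collapses the problem to showing
$$\abs{x}\int_0^1\Bigl(\frac{1-s^2}{1+s^2}\Bigr)^{n/2}\frac{1}{s^{1/2}}\,e^{-\frac{s\abs{x}^2}{C}}\,d\mu_{-1}(s)\leq C.$$

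To close this I would split at $s=1/2$ and use the asymptotics \eqref{est dmu rho}. On $(0,1/2)$ one has $d\mu_{-1}(s)\sim ds$, and the substitution $u=s\abs{x}^2$ turns $\abs{x}\int_0^{1/2}s^{-1/2}e^{-s\abs{x}^2/C}\,ds$ into a multiple of $\int_0^{\abs{x}^2/(2C)}u^{-1/2}e^{-u}\,du$, uniformly bounded in $x$ (for $\abs{x}$ small one instead uses directly that $s^{-1/2}$ is integrable on $(0,1/2)$). On $(1/2,1)$, $d\mu_{-1}(s)\sim ds/(1-s)$, but the factor $(1-s^2)^{n/2}$ cancels this singularity (leaving at worst $(1-s)^{-1/2}$ when $n=1$, still integrable), while the uniform Gaussian $e^{-\abs{x}^2/C}$ absorbs the prefactor $\abs{x}$. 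The essential difficulty, to repeat, is that the bare size estimate from Lemma \ref{Lem:Rij est} is critically non-integrable in $n$ dimensions; the remedy is to exploit the explicit Gaussian structure of the Hermite heat kernel and interchange the order of integration so that the $z$-variable is integrated first.
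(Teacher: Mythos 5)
Your proposal is correct, and it follows a genuinely different and in two of the three cases cleaner route than the paper. For $\mathrm{K}=\abs{x}^{2\sigma}F_{-\sigma}$, the paper does not use the identity $\int_{\Real^n}F_{-\sigma}(x,z)\,dz=H^{-\sigma}1(x)$; instead it splits into $\abs{x}\leq2$ and $\abs{x}>2$, and in the latter case further splits by $\abs{x}\lessgtr\abs{x-z}$ and carries out a triple change of variables in the double $(z,s)$-integral, after noting that $\abs{x}\geq\abs{x-z}$ forces $\abs{x+z}\geq\abs{x}$. Your observation that $\int F_{-\sigma}(x,z)\,dz=H^{-\sigma}1(x)$ (this is exactly display \eqref{H-sig1}) combined with the decay from Lemma \ref{Lem:H-sig 1} reduces that whole computation to one line, with no circularity since Lemma \ref{Lem:H-sig 1} precedes Lemma \ref{Lem:Bruno} and is proved independently of it. Your treatment of $\abs{z}^{2\sigma}F_{-\sigma}$ via $\abs{z}^{2\sigma}\leq C(\abs{x}^{2\sigma}+\abs{x-z}^{2\sigma})$ is the same as the paper's. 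For $x_i\partial_{x_j}F_{-1}$, the paper simply remarks that the same region-splitting machinery used for $\abs{x}^{2\sigma}F_{-\sigma}$ applies because of \eqref{x nab F-1}; your alternative of applying Fubini and completing the square in $z$ is a different and arguably more transparent device. Your computation of the critical point $z_0=\frac{1-s^2}{1+s^2}x$, of the minimum value $Q(z_0)=\frac{4s\abs{x}^2}{1+s^2}$ (the identity $(s+s^{-1})^2-(s-s^{-1})^2=4$ makes this immediate), and of the resulting $z$-integral all check out, and the subsequent one-dimensional estimate via the asymptotics \eqref{est dmu rho} on $(0,1/2)$ and $(1/2,1)$ closes the argument uniformly in $x$. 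The trade-off is that the paper's approach is self-contained within a single technical lemma, while yours leans on the precomputed decay of $H^{-\sigma}1$ and on the exact Gaussian structure of the Mehler kernel; both are legitimate, and your version is the one I would keep.
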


\begin{proof}
Consider the function $\abs{x}^{2\sigma}F_{-\sigma}(x,z)$. If $\abs{x}\leq2$ then, by \eqref{F-sig est}, $\displaystyle \abs{x}^{2\sigma}\int_{\Real^n}F_{-\sigma}(x,z)~dz\leq C$. If $\abs{x}>2$, we consider two regions of integration: $\abs{x}<\abs{x-z}$ and $\abs{x}\geq\abs{x-z}$. In the first region, by Lemma \ref{Lem:B-T},
$$\abs{x}^{2\sigma}F_{-\sigma}(x,z)\leq C\int_0^1\left(\frac{1-s}{s}\right)^{n/2}\frac{1}{s^{-\sigma}}~ e^{-C\left[s\abs{x+z}^2+\frac{1}{s}\abs{x-z}^2\right]}~d\mu_{-\sigma}(s)\leq C\Phi(x-z),$$
with $\Phi\in L^1(\Real^n)$. To study the second region of integration, namely $\abs{x}\geq\abs{x-z}$, we use the fact that $\abs{x+z}>\abs{x}$ and we split the integral defining $F_{-\sigma}$ into two intervals: $(0,1/2)$ and $(1/2,1)$. To estimate the part of the integral over the interval $(0,1/2)$ we note that, by using \eqref{est dmu rho} and three different changes of variables, we have
\begin{align*}
  \int_{\abs{x}\geq\abs{x-z}}\int_0^{1/2}G_{t(s)}(x,z)~d\mu_{-\sigma}(s)~dz &\leq C\int_{\abs{x}\geq\abs{x-z}}\int_0^{1/2}\frac{1}{s^{n/2}}~e^{-\frac{1}{4}\left[s\abs{x}^2+\frac{1}{s}\abs{x-z}^2\right]}~\frac{ds}{s^{1-\sigma}}~dz \\
   &= C\abs{x}^{n-2\sigma}\int_{\abs{x}\geq\abs{x-z}}\int_0^{\frac{\abs{x}^2}{2}}\frac{1}{r^{n/2}}~e^{-\frac{1}{4}\left[r+\frac{1}{r}\abs{x}^2\abs{x-z}^2\right]}~\frac{dr}{r^{1-\sigma}}~dz \\
   &= C\abs{x}^{-2\sigma}\int_{\abs{x}^2\geq\abs{w}}\int_0^{\frac{\abs{x}^2}{2}}\frac{1}{r^{n/2}}~e^{-\frac{1}{4}\left[r+\frac{1}{r}\abs{w}^2\right]}~\frac{dr}{r^{1-\sigma}}~dw \\
   &= C\abs{x}^{-2\sigma}\int_0^{\abs{x}^2}\int_0^{\frac{\abs{x}^2}{2}}\frac{1}{r^{n/2}}~e^{-\frac{1}{4}\left[r+\frac{\rho^2}{r}\right]}~\frac{dr}{r^{1-\sigma}}~\rho^{n-1}~d\rho \\
   &\leq C\abs{x}^{-2\sigma}\int_0^\infty\frac{e^{-\frac{r}{4}}}{r^{n/2-\sigma}}\left[\int_0^\infty e^{-\frac{\rho^2}{4r}}\rho^n~\frac{d\rho}{\rho}\right]\frac{dr}{r} \\
   &= C\abs{x}^{-2\sigma}\left[\int_0^\infty e^{-\frac{r}{4}}r^\sigma~\frac{dr}{r}\right]\left[\int_0^\infty e^{-t}t^{n/2}~\frac{dt}{t}\right]=C\abs{x}^{-2\sigma}.
\end{align*}
The integral over the interval $(1/2,1)$ is bounded by
$$\abs{x}^{2\sigma}\int_{1/2}^1(1-s)^{n/2}e^{-C\abs{x}^2}e^{-\frac{\abs{x-z}^2}{C}}~d\mu_{-\sigma}(s)\leq Ce^{-\frac{\abs{x-z}^2}{C}}\in L^1(\Real^n).$$
Hence we get the conclusion for $\mathrm{K}(x,z)=\abs{x}^{2\sigma}F_{-\sigma}(x,z)$. To prove the result for the function $\abs{z}^{2\sigma}F_{-\sigma}(x,z)$, observe that $\abs{z}^{2\sigma}\leq C\left(\abs{z-x}^{2\sigma}+\abs{x}^{2\sigma}\right)$, so we can apply the estimates above. When $\mathrm{F}(x,z)=x_i\partial_{x_j}F_{-1}(x,z)$ we can argue as we did for $\abs{x}^{2\sigma}F_{-\sigma}(x,z)$ above, because of \eqref{x nab F-1}.
\end{proof}

\begin{lem}\label{Lem:Torrea}
For all $1\leq\abs{i}\leq n$, and $0<r_1<r_2\leq\infty$,
$$\sup_x\abs{\int_{r_1<\abs{x-z}\leq r_2}A_iF_{-1/2}(x,z)~dz}\leq C,$$
where $C>0$ is independent of $r_1$ and $r_2$.
\end{lem}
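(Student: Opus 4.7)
The plan is to shift the $x$-derivative in $A_i^x F_{-1/2}(x,z)$ to a $z$-derivative via a spectral identity, then integrate by parts over the annulus $A=\{z:r_1<|x-z|<r_2\}$. Using the Hermite-basis relation $A_ih_\nu=\sqrt{2\nu_i}\,h_{\nu-e_i}$ in the spectral expansion of $G_t$, one verifies $A_i^x G_t(x,z)=e^{-2t}\,A_{-i}^z G_t(x,z)$, and integrating against $t^{-1/2}/\Gamma(1/2)$ yields the pointwise identity
\begin{equation*}
A_i^x F_{-1/2}(x,z)=A_{-i}^z\tilde F(x,z)=(-\partial_{z_i}+z_i)\tilde F(x,z),
\end{equation*}
where $\tilde F(x,z):=\Gamma(1/2)^{-1}\int_0^\infty e^{-2t}G_t(x,z)\,t^{-1/2}\,dt$ is the kernel of $(H+2)^{-1/2}$. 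Since $\tilde F\le F_{-1/2}$, it inherits the size and smoothness estimates of Lemma \ref{Lem:F-sig est}.

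Applying the divergence theorem on $A$ gives
\begin{equation*}
\int_A A_i F_{-1/2}(x,z)\,dz=-\int_{\partial A}\tilde F(x,z)\,\nu_i\,dS(z)+\int_A z_i\,\tilde F(x,z)\,dz,
\end{equation*}
with $\nu$ the outward unit normal (the outer boundary disappearing when $r_2=\infty$ by Gaussian decay), and the task reduces to controlling each piece uniformly in $r_1,r_2,x$. For $n\ge 2$ the bound $|\tilde F(x,z)|\le C|x-z|^{1-n}e^{-|x||x-z|/C}e^{-|x-z|^2/C}$ makes each spherical integral $\le C\,e^{-|x|r/C}\le C$ uniformly in $r$, and splitting $|z_i|\le|z-x|+|x|$ yields $\int_A|z-x|\,|\tilde F|\,dz\le C\int_0^\infty\rho\,e^{-|x|\rho/C-\rho^2/C}\,d\rho\le C$, while $|x|\int_A|\tilde F|\,dz$ is uniformly bounded because the factor $|x|$ is absorbed by the Gaussian $e^{-|x||x-z|/C}$ (a Lemma \ref{Lem:B-T}-style estimate giving $|x|\cdot C/|x|\le C$). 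The analogous statement for $A_{-i}$ uses the partner identity $A_{-i}^x G_t=e^{2t}A_i^z G_t$: the $e^{2t}$ growth is tamed by $A_i^z h_0=0$, so that after subtracting the Hermite ground-state projection one obtains a companion auxiliary kernel with identical size bounds, and the same divergence argument applies.

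The principal obstacle is the log-borderline dimension $n=2\sigma=1$, where $|\tilde F(x,z)|$ carries an additional $\log(1/|x-z|)$ factor on the diagonal and the naive size estimates above can grow logarithmically in $|x|$. Antipodal cancellation saves the day: $\partial A$ is at most four points paired as $x\pm r_j$, and Taylor-expanding the $\sinh$-factor in the Mehler-type formula for $\tilde F$ produces $|\tilde F(x,x+r)-\tilde F(x,x-r)|\le C|x|r$, bounded whenever $|x|r\le 1$, with the complementary regime $|x|r>1$ handled by the exponential factor $e^{-|x|r/C}$ overcoming the logarithm. For the interior one decomposes $z_i\tilde F(x,z)=x_i\tilde F+(z_i-x_i)\tilde F$; the second piece pairs antipodally as above and contributes $O(1/|x|^2)$, while the first is controlled via the Bessel identity $\int_0^\infty s^{-1}e^{-As-B/s}\,ds=2K_0(2\sqrt{AB})$ applied with $A\sim x^2$, $B\sim w^2$, giving $|\tilde F(x,x+w)+\tilde F(x,x-w)|\lesssim K_0(xw)$ and hence $|x|\int_{r_1}^{r_2}|S(x,w)|\,dw\le \int_0^\infty K_0(u)\,du=\pi/2$. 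Securing these uniform bounds in the borderline dimension is the chief technical subtlety.
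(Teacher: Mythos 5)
Your route is genuinely different from the paper's. You transfer the $x$-derivative to a $z$-derivative through the spectral commutation $A_i^xG_t(x,z)=e^{-2t}A_{-i}^zG_t(x,z)$, which indeed gives $A_iF_{-1/2}(x,z)=A_{-i}^z\widetilde F(x,z)$ with $\widetilde F=F_{2,-1/2}$, and then you integrate by parts over the annulus, producing a surface term and a $z_i\widetilde F$ term. The paper does not integrate by parts at all: it writes $A_iF_{-1/2}=\pm\partial_{x_i}F_{-1/2}+x_iF_{-1/2}$, disposes of $x_iF_{-1/2}$ by the $L^1$ bound of Lemma \ref{Lem:Bruno}, and then splits $\partial_{x_i}F_{-1/2}$ into a piece $I$ (the $s|x+z|^2$ differentiation) that is already bounded by an $L^1$ function, plus a piece $II$ (the $|x-z|^2/s$ differentiation) that it compares against an auxiliary kernel $\widetilde{II}$ obtained by freezing $|x+z|^2$ at $|2x|^2$; the frozen kernel is odd in $x-z$ so its annular integral vanishes identically, and $|II-\widetilde{II}|\in L^1$ by the Mean Value Theorem. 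The paper's decomposition has two advantages: it treats $A_i$ and $A_{-i}$ on exactly the same footing (you instead need the partner identity $A_{-i}^xG_t=e^{2t}A_i^zG_t$ and a ground-state subtraction to tame $e^{2t}$, which you correctly flag but leave unverified), and its dimension bookkeeping is uniform — the $L^1$ bounds from Lemma \ref{Lem:B-T} absorb the $n=1$ logarithm without special pleading. Your approach is arguably cleaner in structure once set up, and the $z_i\widetilde F$ volume term is in fact already controlled for all $n\geq 1$ by Lemma \ref{Lem:Bruno} (so the Bessel excursion, while correct, is unnecessary).

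One point needs correction. In the borderline dimension you claim that for $|x|r>1$ the factor $e^{-|x|r/C}$ overcomes the logarithm in the size estimate. That is false in the regime where $|x|r$ is of order one and $r$ is tiny: take $r\sim 1/k$, $|x|\sim 2k$, so $|x|r\sim 2$, the exponential is a fixed constant, but $\log(1/r)\sim\log k\to\infty$. What actually saves you is that the antipodal cancellation itself carries exponential decay: extracting $e^{-sx^2}\sinh(sxw)$ from the Gaussian difference and combining it with $e^{-w^2/(4s)}$ yields
$$\abs{\widetilde F(x,x+r)-\widetilde F(x,x-r)}\leq C\,|x|r\,e^{-c|x|r},$$
which is uniformly bounded for all $r$ and $x$ simultaneously; you should invoke this refined cancellation estimate in every regime rather than switching to the raw size estimate for $|x|r>1$. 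With that replacement, and with the routine verification of the Lemma~\ref{Lem:F-sig est}-type bounds for the ground-state-subtracted $A_{-i}$ kernel, the argument closes.
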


\begin{proof}
By estimate \eqref{Ai F-sig est} given in Lemma \ref{Lem:F-sig est}, it is enough to consider $r_2<1$. From Lemma \ref{Lem:Bruno}, with $\sigma=1/2$, we have that $\displaystyle \int_{\Real^n}x_iF_{-1/2}(x,z)~dz\leq C$. We can write
$$\int_{r_1<\abs{x-z}<r_2}\partial_{x_i}F_{-1/2}(x,z)~dz=\int_{r_1<\abs{x-z}<r_2}I(x,z)~dz+\int_{r_1<\abs{x-z}<r_2}II(x,z)~dz,$$
where
$$I(x,z)=\frac{1}{\Gamma(1/2)}\int_0^1\left(\frac{1-s^2}{4\pi s}\right)^{n/2}e^{-\frac{1}{4}\left[s\abs{x+z}^2+\frac{1}{s}\abs{x-z}^2\right]}\left(-\frac{s}{2}(x_i+z_i)\right)~d\mu_{-1/2}(s).$$
Lemma \ref{Lem:B-T} shows that
$$\abs{I(x,z)}\leq C\int_0^1\left(\frac{1-s}{s}\right)^{n/2}\frac{1}{s^{-1/2}}~e^{-\frac{1}{4}\left[s\abs{x+z}^2+\frac{1}{s}\abs{x-z}^2\right]}~d\mu_{-1/2}(s)\leq\Phi(x-z),
$$
for some integrable function $\Phi$. To deal with $II(x,z)$, we consider the integral
$$\widetilde{II}(x,z)=\frac{1}{\Gamma(1/2)}\int_0^1\left(\frac{1-s^2}{4\pi s}\right)^{n/2}e^{-\frac{1}{4}\left[s\abs{2x}^2+\frac{1}{s}\abs{x-z}^2\right]}\frac{-(x_i-z_i)}{2s}~d\mu_{-1/2}(s),$$
which verifies
$$\abs{\int_{r_1<\abs{x-z}<r_2}\widetilde{II}(x,z)~dz}=0.$$
Therefore, by applying the Mean Value Theorem and some argument parallel to the one used in the proof of Lemma \ref{Lem:B-T}, we have
\begin{align*}
  \abs{II(x,z)-\widetilde{II}(x,z)} &\leq C\int_0^1\left(\frac{1-s}{s}\right)^{n/2}\frac{1}{s^{1/2}}~e^{-\frac{\abs{x-z}^2}{Cs}}\abs{e^{-\frac{1}{4}s\abs{x+z}^2}-e^{-\frac{1}{4}s\abs{2x}^2}}~d\mu_{-1/2}(s) \\
   &\leq C\int_0^1\left(\frac{1-s}{s}\right)^{n/2}e^{-\frac{\abs{x-z}^2}{Cs}}~d\mu_{-1/2}(s)\leq\Psi(x-z),
\end{align*}
for some $\Psi\in L^1(\Real^n)$.
\end{proof}

\textbf{Acknowledgments.} We would like to thank Prof. Jorge J. Betancor, for his comments about Section \ref{Section:Abstracta}, that improved Proposition \ref{Prop:tecnico} and Remark \ref{Rem:K L1}.


\end{document}